\tikzset{
    state/.style={
           rectangle,
           rounded corners,
           draw=black, very thick,
           minimum height=2em,
           inner sep=2pt,
           text centered,
           },
}
\tikzset{global scale/.style={
    scale=#1,
    every node/.append style={scale=#1}
  }
}
\definecolor{darkgreen}{rgb}{0.00,0.50,0.25}
\definecolor{purple}{rgb}{0.54,0.17,0.89}
\definecolor{purple1}{rgb}{1.00,0.00,1.00}
\titleformat*{\subsection}{\bfseries}
\theoremstyle{plain}                       
\newtheorem{lemma}{Lemma}[section]
\newtheorem{theorem}[lemma]{Theorem}
\newtheorem{corollary}[lemma]{Corollary}
\newtheorem{remark}[lemma]{Remark}
\newtheorem{proposition}[lemma]{Proposition}
\theoremstyle{remark}
\numberwithin{equation}{section}
\newcommand{\pll}{\kern 0.56em/\kern -0.8em /\kern 0.56em}
\def\Xint#1{\mathchoice
  {\XXint\displaystyle\textstyle{#1}}%
  {\XXint\textstyle\scriptstyle{#1}}%
  {\XXint\scriptstyle\scriptscriptstyle{#1}}%
  {\XXint\scriptscriptstyle\scriptscriptstyle{#1}}%
  \!\int}
\def\XXint#1#2#3{{\setbox0=\hbox{$#1{#2#3}{\int}$}
  \vcenter{\hbox{$#2#3$}}\kern-.5\wd0}}
\def\dashint{\Xint-}
\begin{document}
\allowdisplaybreaks

\author[a,b]{Li Wang
}

\author[a]{Qiang Xu\thanks{Email: xuq@lzu.edu.cn.}
}

\author[b]{Zhifei Zhang \thanks{Email: zfzhang@math.pku.edu.cn.}
}
\affil[a]{School of Mathematics and Statistics, Lanzhou University, Lanzhou, 730000, China.}

\affil[b]{School of Mathematic Sciences,
Peking University, Peking 100871, China.}

%


\title{\textbf{Corrector estimates and homogenization errors\\
of unsteady flow ruled by Darcy's law}}

\maketitle
\begin{abstract}
Focusing on Darcy’s law incorporating memory effects, this paper studies non-stationary Stokes equations on perforated domains.
We establish a sharp homogenization error for both velocity and pressure
in terms of the energy norm. The main challenge lies in gauging the boundary layers induced by the incompressibility condition.
To address this, we construct boundary-layer correctors using Bogovskii's operator. Also,
the present work provides detailed regularity estimates for these    correctors, where a significant difficulty arises from the incompatibility between initial and boundary values. The methodologies developed herein hold a great potential for tackling the same issue in other evolutionary models beyond a homogenization setting.

\smallskip
\noindent
\textbf{Key words:}
homogenization error; perforated domain;
unsteady Stokes equation; Darcy's law with memory;
boundary layer.
\end{abstract}

\tableofcontents

\section{Introduction}

\subsection{\centering Motivation and main results}
\noindent

We start from introducing the main object of this paper.
Let the ratio of the period to
the overall size of the porous medium be denoted by a parameter $\varepsilon$, which allowed to approach zero, and the porous medium is contained in a bounded domain $\Omega$.
Its fluid part is represented by $\Omega_\varepsilon$,
which is also referred to as a perforated domain. Precisely,
let $\Omega\subset \mathbb{R}^d$ (with $d\geq 2$) be a bounded domain
with $C^{2}$ boundary, and set $Y:=[-\frac{1}{2},\frac{1}{2})^d\cong\mathbb{R}^d/\mathbb{Z}^d$ to be the elementary cell for the lattice $\mathbb{Z}^d$, made of two complementary parts:
the solid part $Y_{s}$ and
the fluid part $Y_{f}$.
Assume that $Y_{s}\subsetneqq Y$ is a connected subset of $Y$ with
$C^{2}$ boundary and a strictly positive Lebesgue
measure in $\mathbb{R}^d$, and we are interest in the case
where $Y_f:=Y\setminus \overline{Y}_s$ is connect set.
We now define the configuration and the perforated domain, respectively.
\begin{equation}\label{RA}
\omega:=\bigcup_{z\in\mathbb{Z}^d}(Y_{f}+z);
\qquad\quad
  \Omega_{\varepsilon} := \Omega\cap \varepsilon\omega =
  \Omega\setminus\bigcup_{k}\varepsilon(\overline{Y}_{s}+z_{k}),
\end{equation}
where $z_k\in \mathbb{Z}^d$ and the union is taken over those $k's$ such that
$\varepsilon(Y+z_k)\subset\Omega$ and
$\text{dist}(\partial\Omega,\partial\Omega_{\varepsilon}
\setminus\partial\Omega)\geq \kappa_0\varepsilon$ with
$\kappa_{0}\geq 2$ (which is not essential). These assumptions
will be utilized without stating in later sections.

The incompressible fluid movement in $\Omega_\varepsilon$
can be described by the unsteady Stokes equations:
for $T>0$,
\begin{equation}\label{pde1.1}
\left\{
\begin{aligned}
\partial_t u_{\varepsilon}-\varepsilon^2
\mu\Delta u_{\varepsilon}+\nabla p_{\varepsilon}
&=&f\qquad&\text{in}\quad \Omega_{\varepsilon}\times(0,T];\\
\nabla\cdot u_{\varepsilon}&=&0\qquad&\text{in}\quad \Omega_{\varepsilon}\times(0,T];\\
u_{\varepsilon}&=&0\qquad&\text{on}\quad \partial\Omega_{\varepsilon}\times (0,T];\\
u_{\varepsilon}|_{t=0}&=&0\qquad&\text{on}\quad \Omega_{\varepsilon}.
\end{aligned}\right.
\end{equation}
We denote by $u_\varepsilon$ and $p_\varepsilon$ the velocity and
pressure of the fluid, respectively, while $f$ represents the density of forces acting
on the fluid, where $u_\varepsilon$ and $f$ are vector-valued functions but
$p_\varepsilon$ is a scalar. The fluid viscosity $\mu$ is a constant and
we assume $\mu=1$ throughout the paper for simplicity.
In terms of the scaling $\varepsilon^2$ of the viscosity in the equations $\eqref{pde1.1}$, it is not a simple change of variable as in the stationary case because the density in front of the inertial
term has been scaled to 1.
The present scaling in $\eqref{pde1.1}$ will precisely lead to a limit problem depending on time in a nonlocal manner (i.e., memory effects),
which is the critical case shown in \cite{Sandrakov97}.

The limit behavior of the equations $\eqref{pde1.1}$ was pioneeringly investigated by J.-L. Lions \cite{Lions81} through a formal asymptotic expansion argument,
demonstrating that the effective equation is governed by Darcy's law with memory. Then, under the geometry assumption $\eqref{RA}$,
G. Allaire \cite{Allaire92} gave a rigorous proof
by using the two-scale convergence method,
while A. Mikeli\'c \cite{Mikelic94} developed a similar result independently.
Also, G. Sandrakov \cite{Sandrakov97} studied the same type models by
considering different scalings on viscosity.

Since the solution $(u_\varepsilon,p_\varepsilon)$ to the equations
$\eqref{pde1.1}$ is not defined in a fixed domain, a mehtod of extending the solution to the whole domain $\Omega$ is required to state the homogenization result.
As in the stationary case, the velocity
$u_\varepsilon$ naturally owns a zero-extension,
denoted by $\tilde{u}_\varepsilon$.
Let $Y_s^k := Y_s+z_k$ with $z_k\in \mathbb{Z}^d$. Let  $\tilde{p}_{\varepsilon}$ be the extension of $p_{\varepsilon}$ to $\Omega$, given as follows:
\begin{equation}\label{eq1.1}
\tilde{p}_{\varepsilon}(x,t):=
\left\{
\begin{aligned}
&p_{\varepsilon}(x,t)&\quad&\text{if}\quad x\in\Omega_{\varepsilon};\\
&\dashint_{\varepsilon(Y_f+z_{k})}p_{\varepsilon}(\cdot,t)&\quad&\text{if}
\quad x\in\varepsilon Y_{s}^k \quad\text{and}\quad
\varepsilon Y^k_s\subset\Omega \text{~for~some~}z_{k}\in \mathbb{Z}^d.
\end{aligned}\right.
\end{equation}

We now present the main results in the qualitative homogenization theory.

\begin{theorem}[homogenization theorem \cite{Allaire92,Allaire-Mikelic97,Lions81,Mikelic94,Sandrakov97}]
\label{thm3}
Let $0<T<\infty$, and $d\geq 2$. There exists an extension $(\tilde{u}_\varepsilon,\tilde{p}_\varepsilon)$ of the solution $(u_\varepsilon,p_\varepsilon)$ to $\eqref{pde1.1}$, which
weakly converges in $L^2(0,T;L^2(\Omega)^d)\times
L^2(0,T;L^2(\Omega)/\mathbb{R})$ to the unique solution
$(u_0,p_0)$ of the homogenized problem (known as Darcy's law with memory):
\begin{equation}\label{pde1.2}
\left\{
\begin{aligned}
&u_0(x,t)=\int_{0}^tds A(t-s)(f-\nabla p_0)(x,s)\qquad&\text{in}&\quad\Omega\times(0,T);\\
&\nabla\cdot u_0=0\qquad&\text{in}&\quad\Omega\times(0,T);\\
&u_0\cdot \vec{n}=0\qquad&\text{on}&\quad\partial\Omega\times (0,T),
\end{aligned}
\right.
\end{equation}
where $\vec{n}$ is the unit outward normal vector of $\partial\Omega$.
The quantity $A=(A_{ij})$ is a symmetry, positive defined,
and time-dependent
permeability tensor,  determined by
\begin{equation}\label{pde2.2}
  A_{ij}(t)=\int_{Y_f}dy W_j(y,t)\cdot e_i,
\end{equation}
where $e_{i}=(0,\cdots,1,\cdots,0)$ with 1 in the $i^{th}$ place,
and $(W_{j},\pi_{j})\in
L^2(0,T;H^{1}_{\text{per}}(Y_f)^d)\times
H^{-1}
({0,T;L^2_{\text{per}}(Y_f)}/\mathbb{R})$ is the corrector associated with $e_j$ by the following equations:
\begin{equation}\label{pde2.1}
\left\{
\begin{aligned}
\partial_t W_j-\Delta W_{j}+
\nabla \pi_{j}&=0&\qquad&\text{in}\quad
\omega\times(0,T];\\
\nabla\cdot W_j&=0&\qquad&\text{in}\quad
\omega\times(0,T];\\
W_j&=0&\qquad&\text{on}\quad\partial\omega
\times(0,T],\\
W_j|_{t=0}&=e_j&\qquad&\text{on}\quad\omega.
\end{aligned}\right.
\end{equation}
Moreover, there holds the strong convergence 
\begin{equation}\label{pri:1.2}
\begin{aligned}
& \int_{0}^{T}dt\big\|\tilde{u}_\varepsilon(\cdot,t)
    -\int_{0}^{t}ds W(\cdot/\varepsilon,t-s)(f-\nabla p_0)(\cdot,s)\big\|_{L^2(\Omega)}^2
  \to 0,\qquad\text{as}\quad \varepsilon\to 0.
\end{aligned}
\end{equation}
\end{theorem}

\medskip

The main purpose of this paper is to quantify the strong convergence  as described in $\eqref{pri:1.2}$. For the reader's convenience, we introduce the following notations:
\begin{equation*}
A\ast f(\cdot,t) := \int_{0}^{t}ds A(t-s)f(\cdot,s);\qquad
\Omega_{\varepsilon,T}:=\Omega_{\varepsilon}\times(0,T].
\end{equation*}

\begin{theorem}[error estimates]\label{thm1}
Let $0<T<\infty$, and $d=2$ or $3$. Assume that
$\Omega\subset\mathbb{R}^d$ is a bounded $C^{2}$ domain, and
the perforated one $\Omega_\varepsilon$
satisfies the geometrical hypothesis $\eqref{RA}$.
Given $f\in L^2(0,T;C^{1,\frac{1}{2}}
(\bar{\Omega})^d)$,
let $(u_{\varepsilon},p_{\varepsilon})
\in L^2(0,T;H^1_0(\Omega_{\varepsilon})^d)
\times L^2(0,T;L^2(\Omega_{\varepsilon})/\mathbb{R})$
be the weak solution of \eqref{pde1.1}.
Then, we have
\begin{equation}\label{es1.1}
\begin{aligned}
\|u_{\varepsilon}-W({\cdot}/{\varepsilon})
&\ast
(f-\nabla p_{0})\|_{
L^{2}(\Omega_{\varepsilon, T})}\\
&+\|\varepsilon \nabla u_{\varepsilon}-
\nabla W({\cdot}/{\varepsilon})\ast(f-\nabla p_0)
\|_{L^2(\Omega_{\varepsilon, T})}
\leq C\varepsilon^{1/2}
\|f\|_{L^2(0,T;C^{1,1/2}(\bar\Omega))},
\end{aligned}
\end{equation}
and there further holds
\begin{equation}\label{pri:1.3}
\begin{aligned}
\|\partial_t u_\varepsilon
-\partial_tW(\cdot/\varepsilon)*(f-
\nabla p_0)
&-W(0)(f
-\nabla p_0)\|_{L^2(\Omega_{\varepsilon,T})}\\
&+\inf_{c\in\mathbb{R}^d}\|p_\varepsilon-p_0
 -c\|_{L^2(\Omega_{\varepsilon,T})}
\leq C\varepsilon^{1/2}
\|f\|_{L^2(0,T;C^{1,1/2}(\bar\Omega))},
\end{aligned}
\end{equation}
where the constant $C$ depends only on $d$, $T$, $|Y_f|$, and the characters of $\Omega$ and $Y_s$.
\end{theorem}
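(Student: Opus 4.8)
The plan is to run an energy estimate on the difference between the genuine solution and a carefully corrected two-scale ansatz, and then absorb the mismatch terms. Concretely, set $v_\varepsilon := W(\cdot/\varepsilon)\ast(f-\nabla p_0)$, which by the corrector equation $\eqref{pde2.1}$ solves, in $\Omega_\varepsilon\times(0,T)$, an unsteady Stokes system
$\partial_t v_\varepsilon - \varepsilon^2\Delta v_\varepsilon + \nabla q_\varepsilon = f - (I - A(0)\cdots)\,(\text{error})$
with $q_\varepsilon$ built from $\pi(\cdot/\varepsilon)\ast(f-\nabla p_0)$ plus $p_0$; here one uses the homogenized equation $\eqref{pde1.2}$ to match the leading-order forcing, so that the right-hand side of the equation for $v_\varepsilon$ equals $f$ up to terms that are either supported near $\partial\Omega$ or carry an explicit factor of $\varepsilon$. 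The defect in the divergence is the crux: $\nabla\cdot v_\varepsilon$ does not vanish because the cut-off implicit in restricting $W(\cdot/\varepsilon)$ to $\Omega_\varepsilon$ (and the non-constancy of $f-\nabla p_0$ in the slow variable) produces a divergence of size $O(\varepsilon)$ concentrated in an $O(\varepsilon)$-neighborhood of $\partial\Omega$.

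Second, I would correct this divergence using Bogovskii's operator, as advertised in the abstract: construct $\phi_\varepsilon$ supported in the boundary layer with $\nabla\cdot\phi_\varepsilon = \nabla\cdot v_\varepsilon$ and with the scaled bounds $\|\phi_\varepsilon\|_{L^2}\lesssim \varepsilon^{1/2}\|f\|$, $\|\varepsilon\nabla\phi_\varepsilon\|_{L^2}\lesssim \varepsilon^{1/2}\|f\|$, $\|\partial_t\phi_\varepsilon\|_{L^2}\lesssim\varepsilon^{1/2}\|f\|$, exploiting that the layer has volume $O(\varepsilon)$ and that Bogovskii's operator on each $\varepsilon$-cell obeys $\varepsilon$-uniform estimates after rescaling. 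Then $V_\varepsilon := v_\varepsilon - \phi_\varepsilon$ is genuinely divergence-free in $\Omega_\varepsilon$, vanishes on $\partial\Omega_\varepsilon$, has zero initial data, and solves the Stokes system with a right-hand side $f + R_\varepsilon$ where $\|R_\varepsilon\|_{L^2(0,T;H^{-1}(\Omega_\varepsilon))} + \|R_\varepsilon\|_{L^2(0,T;L^2)}$ (appropriately weighted) is $O(\varepsilon^{1/2}\|f\|)$, the error coming from (i) the Bogovskii corrector terms, (ii) the $\varepsilon^2\Delta$-terms hitting the slow variable, (iii) the boundary-layer forcing where $f-\nabla p_0$ fails to be captured by the corrector. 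Testing the equation for $u_\varepsilon - V_\varepsilon$ against $u_\varepsilon - V_\varepsilon$ and using that it is solenoidal (so the pressure drops out), Gr\"onwall in $t$ yields the first estimate $\eqref{es1.1}$ for the velocity and its scaled gradient.

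Third, for $\eqref{pri:1.3}$ I would differentiate the equation in time — here the compatibility subtlety flagged in the Remark matters — and either test with $\partial_t(u_\varepsilon - V_\varepsilon)$ after noting $\partial_t W(0) = W'(0)$ enters as the instantaneous response, or equivalently run a duality/maximal-regularity argument for the unsteady Stokes operator on $\Omega_\varepsilon$ with $\varepsilon$-uniform constants (the $\varepsilon^2$-scaling of the viscosity is precisely what makes these constants uniform). The pressure estimate then follows by the standard route: from the equation, $\nabla(p_\varepsilon - p_0 - c) = f - \partial_t u_\varepsilon + \varepsilon^2\Delta u_\varepsilon - \nabla q_\varepsilon^{\mathrm{corr}}$ up to the already-controlled errors, and inverting $\nabla$ on $L^2_0(\Omega_\varepsilon)$ via (again) a Bogovskii/Ne\v{c}as inequality with $\varepsilon$-uniform constant gives $\|p_\varepsilon - p_0 - c\|_{L^2} \lesssim \varepsilon^{1/2}\|f\|$ once the constant $c$ is chosen to fix the mean.

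The main obstacle is the boundary-layer analysis of the divergence defect and its Bogovskii correction with the right powers of $\varepsilon$: one must show simultaneously that $\phi_\varepsilon$ is small in $L^2$, that its time derivative is controlled (which requires $\partial_t\nabla\cdot v_\varepsilon$ to be handled, i.e. differentiating the corrector in time and using its decay), and that $\varepsilon\nabla\phi_\varepsilon$ is small — all three with the uniform $\varepsilon^{1/2}$ rate — while keeping the support strictly inside $\Omega_\varepsilon$ so that the no-slip condition is exactly preserved. A secondary difficulty is justifying the time-differentiated energy estimate in $\eqref{pri:1.3}$ given the incompatibility between the corrector's initial datum $e_j$ and the no-slip boundary condition; this is where the extra term $W(0)(f-\nabla p_0)$ in $\eqref{pri:1.3}$ comes from, and handling it rigorously needs either the semigroup/maximal-regularity formulation or a time mollification argument.
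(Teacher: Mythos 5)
Your overall architecture (first-order ansatz, divergence correction via Bogovskii, energy estimate, duality for the pressure) matches the paper's, but there are two genuine gaps at the heart of the argument. First, the divergence defect of $v_\varepsilon=W(\cdot/\varepsilon)\ast(f-\nabla p_0)$ is \emph{not} $O(\varepsilon)$ and \emph{not} concentrated near $\partial\Omega$: since $\nabla\cdot W=0$, one gets $\nabla\cdot v_\varepsilon=W^{\varepsilon}\ast_2\partial F$, an $O(1)$ oscillating quantity on all of $\Omega_\varepsilon$. To make it small you must first add the periodic cell corrector $\phi$ of \eqref{divc1} to the ansatz (as in \eqref{eq:A}), which trades $W^{\varepsilon}\ast_2\partial F$ for $|Y_f|^{-1}A\ast_2\partial F+\varepsilon\phi^{\varepsilon}\ast_3\partial^2 F$, and only then does the leading term vanish via $\nabla\cdot u_0=A\ast_2\partial F=0$. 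Your proposal omits this cell corrector, so a Bogovskii correction of your defect would cost $O(1)$, not $O(\varepsilon^{1/2})$. Second, even for the genuinely boundary-supported part $J_1=\nabla\psi_\varepsilon\cdot[(W^\varepsilon-A)\ast G]$ from \eqref{pde3.8}, the bounds you assert for the Bogovskii corrector do not follow from a volume count: $J_1$ is $O(\varepsilon^{-1})$ pointwise on a set of measure $O(\varepsilon)$, so $\|J_1\|_{L^2}=O(\varepsilon^{-1/2})$ and the $\varepsilon$-uniform Bogovskii estimate \eqref{pde3.21} gives $\varepsilon\|\nabla\eta\|_{L^2}\lesssim\varepsilon^{-1/2}$ — a full power of $\varepsilon$ short. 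The required gain is exactly what the paper's machinery supplies: the radial cut-off with $\nabla\psi_\varepsilon=-|\nabla\psi_\varepsilon|\vec n$ combined with $\vec n\cdot u_0=0$ (Lemma \ref{lemma:5.1}), and the cell decomposition of $O_\varepsilon$ together with the flux corrector $\Phi$, its antisymmetry, and periodicity under translations and rational rotations to show each cell average $\dashint_{O_\varepsilon^i}\partial_t J_1$ is $O(1)$ rather than $O(\varepsilon^{-1})$ (Lemma \ref{*lemma4.1}). You correctly identify this as the main obstacle but provide no mechanism for it.

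A smaller but still substantive point: for \eqref{pri:1.3} you propose differentiating the equation in time, which is precisely the step obstructed by the incompatibility of the corrector's data at $t=0$ (it would require $\partial_t^2 W$ near $t=0$). The paper instead tests the undifferentiated error equation with $\partial_t w_\varepsilon$ and integrates by parts in time on the divergence-form terms, which only needs $\partial_t\Lambda,\partial_t\Xi\in L^2$ — available from the weighted/refined corrector estimates of Proposition \ref{prop2.1}. Your pressure step is in the right spirit, but note (Remark \ref{remark:6.1}) that it genuinely requires the $L^2$ bound on $\partial_t w_\varepsilon$ first; an $H^{-1}$ right-hand side alone does not yield the pressure in $L^2$.
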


\begin{corollary}\label{cor:2}
Assume the same conditions as in Theorem $\ref{thm1}$. Let $\tilde{p}_\varepsilon$ be
the extension of $p_\varepsilon$ as
in $\eqref{eq1.1}$.
Then, there holds the following estimate
\begin{equation}\label{pri:1.4}
   \inf_{c\in\mathbb{R}^d}\Big(\int_{0}^{T}dt
   \int_{\Omega}|\tilde{p}_{\varepsilon}(\cdot,t)-p_0(\cdot,t)-c|^2\Big)^{1/2}
   \leq C \varepsilon^{1/2}\|f\|_{L^2(0,T;C^{1,1/2}(\bar\Omega))},
\end{equation}
where the constant $C$ depends only on $d$, $T$, $|Y_f|$, and the characters of $\Omega$ and $Y_s$.
\end{corollary}

Roughly speaking, the primary contribution of this paper is the derivation of the following asymptotic expansion for the velocity:
\begin{equation}\label{expan:1}
u_\varepsilon \approx \underbrace{W(\cdot/\varepsilon)\ast (f-\nabla p_0)}_{\text{oscillating~part}}
~~~+
\underbrace{\hat{\xi}_\varepsilon + \hat{\eta}_\varepsilon}_{\text{boundary~layer part}}
~~~+ ~~~O(\varepsilon).
\end{equation}
While the form of the above expansion should be well-known to experts in qualitative analysis (see e.g. \cite{Lions81}), how to specifically construct the boundary layer part
and make a well quantitative analysis remains a significant challenge in this field. In this work,  we have successfully constructed the boundary-layer correctors, denoted by  $(\hat{\xi}_\varepsilon,\hat{\eta}_\varepsilon)$, for the boundary layer part.
Furthermore, we derived that
\begin{equation}\label{k-2}
\|\hat{\xi}_\varepsilon\|_{L^2(\Omega_{\varepsilon,T})}
+\varepsilon\|\nabla\hat{\xi}_\varepsilon\|_{L^2(\Omega_{\varepsilon,T})}
+ \|\hat{\eta}_\varepsilon\|_{L^2(\Omega_{\varepsilon,T})}
+\varepsilon\|\nabla\hat{\eta}_\varepsilon\|_{L^2(\Omega_{\varepsilon,T})} = O(\sqrt{\varepsilon}),
\end{equation}
which are optimal even for the stationary case. Consequently,
with respect to the convergence rate,
the error estimates $\eqref{es1.1}$ and
$\eqref{pri:1.3}$ are sharp, given the $O(\sqrt{\varepsilon})$ loss
of the boundary-layer correctors.

Beyond the challenges posed by the boundary layer in spatial variables, Theorem 1.1 has highlighted the considerable difficulties associated with the temporal variable when considering higher-order regularity estimates.
\begin{itemize}
  \item[Q1.] Given $f\in L^2(0,T;C^{1,1/2}(\bar\Omega)^d)$, how to establish the well-posedness of the equations
  $\eqref{pde1.2}$?

Within the framework of Hilbert spaces, A. Mikeli\'c \cite{Mikelic91}
and G. Sandrakov \cite{Sandrakov97} developed well-posedness results  using
Laplace's transform methods. However, these methods are difficult to be applied to general Bochner spaces. Inspired by J.-L. Lions \cite[pp.170]{Lions81},
this work establishes a rigorous well-posedness theory
for the integral-differential equations $\eqref{pde1.2}$ as follows.

\begin{proposition}[well-posedness for Darcy's law with memory]\label{P:1}
Let $1\leq q\leq \infty$, $m\geq 1$ and $\alpha\in(0,1)$.
Given $0<T<\infty$ and $d=2$ or $3$, suppose that
$f\in L^q(0,T;C^{m,\alpha}(\bar{\Omega}
)^d)$ and $\partial\Omega\in C^{m+1,\alpha}$.
Then, there exists a unique $p_0\in L^q(0,T;C^{m+1,\alpha}(\bar{\Omega}))$
to the equations \eqref{pde1.2}
with the condition $\int_{\Omega}p_0(\cdot,t)=0$ for a.e. $t\geq 0$. Moreover, let
the permeability tensor $A$ be given as in $\eqref{pde2.2}$.
Then, for any $0<\beta<(2/21)$, one can derive that
\begin{equation}\label{pri:3.1-1}
|\partial_tA|\in L^{1+\beta}(0,T),
\end{equation}
As a result, there holds the regularity estimate
\begin{equation}\label{pri:3.1}
\|p_0\|_{L^q(0,T;C^{m+1,\alpha}(\bar{\Omega}))}\leq C
\|f\|_{L^q(0,T;C^{m,\alpha}(\bar{\Omega}))},
\end{equation}
where the constant $C$ depends on $d$, $|Y_f|$, $\Omega$, and $T$.
\end{proposition}
To the best of the authors' knowledge, the most recent and relevant results (see \cite[pp.127]{Sandrakov97}) indicate that
$|\partial_tA|\in L^1(0,T)$; and
the estimate $\eqref{pri:3.1}$ is only valid for $L^2(0,T;H^1(\Omega))$.
Although the upper bound of $\beta$ may not be optimal,
the aforementioned estimate $\eqref{pri:3.1-1}$, combined with the definition of $A$ given in $\eqref{pde2.2}$, has already shown that a more refined regularity estimate for the correctors is required. This naturally leads to another key issue addressed in this article.

  \item[Q2.] How can one derive a refined regularity estimate for $(W,\pi)$ to
  $\eqref{pde2.1}$ \textbf{without compatibility conditions}
  between the initial and boundary data?

  In fact, the methodology employed for constructing boundary-layer correctors renders us highly dependent on obtaining more accurate estimates for the corrector $(W,\pi)$. To the best authors' knowledge, there are currently no established theoretical results available for reference concerning the issue of incompatible initial and boundary conditions. This article provides a more in-depth   investigation into this problem, employing three key tools:
  \begin{itemize}
    \item An interior Caccioppoli-type inequality (as well as,  interior higher-order regularity estimates);
    \item Decay estimates of Stokes semigroup;
    \item Interpolations.
  \end{itemize}
  In this regard, the paper achieves the crucial improvement\footnote{Instead of establishing time-decay estimates
as time goes to infinity, we are interested in the ``decay'' as time approaches zero.
In this sense, we regard the estimate $\eqref{k-1}$ as an improvement over original semigroup estimates.\label{footnote1}} in the semigroup estimates as follows: for any $(1/3)<\theta\leq (1/2)$, there exists a constant $C_\theta$ such that
\begin{equation}\label{k-1}
\|\nabla W_j(\cdot, t)\|_{L^2(Y_f)}
\leq C_{\theta} t^{-\theta},
\qquad\forall t>0.
\end{equation}
The time weight $t^\alpha$, introduced below, is utilized to characterize the singularity of the velocity field and pressure near the initial moment. The intensity of this singularity is quantified by the exponent $\alpha$ of the weight function. Leveraging the estimate $\eqref{k-1}$, we have established
\begin{equation}\label{k-3}
\|t^{\alpha}\partial_t W_j\|_{L^2(0,T;L^{2}(Y_f))}
+\|t^{\alpha}\pi_j\|_{L^2(0,T;L^{2}(Y_f)/\mathbb{R})}\leq C_{\alpha},
\qquad  \forall\alpha>(1/3).
\end{equation}
We are uncertain whether the lower bound of $(1/3)$ is optimal; however, it suffices for establishing the desired results in this paper, including the previously stated estimates $\eqref{k-2},\eqref{pri:3.1-1}$, and $\eqref{pri:3.1}$. It is important to note that the technique developed herein primarily pertains to the regularity theory of PDEs, which constitutes an area of independent interest in the current literature.
\end{itemize}

\begin{remark}
\emph{The fundamental reason for restricting the dimension to 2D or 3D in this paper is based on the technique of the corrector's estimate. As mentioned earlier, we are going to employ the interior Caccioppoli-type estimates, which utilizes the basic properties of the curl operator in three-dimensional case (see Lemma $\ref{lemma3.3}$). The high-dimensional generalization of these basic properties involves the redefinition of the curl operator.
We do not intend to discuss this issue in the present work. }
\end{remark}

\subsection{\centering Relation to other works}

The pioneer literature on homogenization errors was contributed by
E. Maru\v{s}i\'{c}-Paloka, A. Mikeli\'{c}, L. Paoli  \cite{Marusi-Mikelic96,Mikelic-Paoli99}, where they obtained an $O(\varepsilon^{1/6})$-error
for steady Stokes problems and an $O(\varepsilon)$-error for non-stationary incompressible Euler's equations,
respectively. These results were obtained in dimension two.
Recently, the quantitative estimates on homogenization of Stokes equations  in porous medias have seen rapid development (see e.g. \cite{Balazi-Allaire-Omnes24,Jankowiak-Lozinski24,Jing20,Jing-Lu-Prange24,
Lu21,Shen20,Shen22,Shen23}).
Regarding the optimal homogenization error for Darcy's law, the main challenge lies in accurately gauging the boundary layers created by the incompressibility condition combined with the discrepancy of boundary values between the solution and the leading term in the related asymptotic expansion $\eqref{expan:1}$.
To date, apart from our approach, there are two other methods to address this difficulty, as outlined below:
\begin{itemize}
  \item[1.] Z. Shen \cite{Shen20} derived the optimal error estimate for steady Stokes
systems for $d\geq 2$. The primary approach involves reducing all challenges to a boundary value problem, followed by defining
boundary correctors with tangential boundary data and normal boundary ones, respectively, to achieve a sharp estimate. The analysis therein  relies on some advanced concepts such as non-tangential maximal function and Rellich estimates.

  \item[2.]
  The approach proposed by
  L. Balazi, G. Allaire, and P. Omnes \cite{Balazi-Allaire-Omnes24}
  is also noteworthy\footnote{G. Jankowiak, A. Lozinski \cite{Jankowiak-Lozinski24} developed a similar strategy in dimension two.}.  In essence, their method involves first correcting the discrepancy between the boundary values of the solution and the leading term. Subsequently, the problem is reduced to correcting the error term to satisfy the incompressibility condition.
  Regarding the techniques in their framework, they smartly concealed a cut-off function within the curl operator, based upon
  the result stated in \cite[Lemma 4.10]{Balazi-Allaire-Omnes24}, previously developed in \cite{Dacorogna02}.
\end{itemize}

E. Maru\v{s}i\'{c}-Paloka, A. Mikeli\'{c} in  \cite[pp.2]{Marusi-Mikelic96}
ever noted that ``the cut-off argument
does not work for the Stokes and Navier-Stokes systems because of the incompressibility condition, which creates a boundary
layer in the neighbourhood of $\partial\Omega$ destroying the estimate.''
By introducing the concept of radial cut-off function,
this work directly addresses this issue. Unlike the method
developed by L. Balazi et al. \cite{Balazi-Allaire-Omnes24,Jankowiak-Lozinski24},
our strategy reduces the problem to constructing boundary-layer correctors associated with Bogovskii's operator (see Subsection \ref{subsec:1.2}).

Regarding the geometric modeling of the porous medium, the isolated obstacles condition $\text{dist}(\partial Y,\partial Y_s) > 0$ appeared in \cite{Shen20}, is not necessary in the present work.  For a detailed description of the notion of isolated (or connected) obstacles, we refer the reader to \cite[Section 2]{Balazi-Allaire-Omnes24} or
\cite[Section 14]{Chechkin-Piatnitski-Shamaev2007} for the details.

In the end, we would like to highlight two points. Firstly, the work of L. Balazi et al. \cite{Balazi-Allaire-Omnes24,Jankowiak-Lozinski24} convinced us that the idea of introducing a radial cut-off function was a viable approach. This inspired us to revisit and significantly improve our previous work \cite{Wang-Xu-Zhang22} (see Remark $\ref{remark:4.2}$). Secondly, it is important to note that our work is not a continuation of the research presented in  \cite{Balazi-Allaire-Omnes24,Jankowiak-Lozinski24,Shen20}
in any sense,
since our approach is independent of the most important tool mentioned above in their analysis. Additionally, the idea of the radial cut-off function has already been applied in recent studies, such as \cite{Jing-Lu-Prange24},
demonstrating its potential for broader applications.

\subsection{\centering Structure of the paper}

In Section $\ref{sec1}$, we introduce the main ideas leading to the conclusion stated in Theorem $\ref{thm1}$. The detailed motivation for boundary-layer correctors is presented in Subsection $\ref{subsec:1.2}$. To provide readers with an overview of the proof for Theorem $\ref{thm1}$, we outline the main steps and key estimates required in Subsection $\ref{subsec:1.4}$.

In Section $\ref{sec2}$, we present the main results on correctors in Proposition $\ref{prop2.1}$. To achieve the required estimates, we employ two rounds of regularity enhancement. Subsection $\ref{subsec:3.1}$ focuses on the first round of regularity improvement, as detailed in $\eqref{k-1}$; Subsection $\ref{subsec:3.2}$ addresses the second round. Subsection $\ref{subsec:3.3}$ mainly provides the proof of weighted estimates $\eqref{k-3}$.

In Section \ref{sec4}, the estimates of boundary-layer correctors are presented in Propositions \ref{lemma3.6-1}, \ref{lemma3.6-2}, and \ref{lemma3.6-3}. Subsection \ref{subsec:4.2} establishes the existence of the desired radial cut-off function. Due to the truncation, we categorize boundary-layer correctors into co-layer type and layer type, with the corresponding proofs provided in Subsections \ref{subsec:4.3} and \ref{subsec:4.4}, respectively.

In Section $\ref{sec5}$, we present the proof of Theorem $\ref{thm1}$. Section $\ref{sec3*}$ serves as an appendix to this paper, providing the proof of Proposition $\ref{P:1}$ for the reader's convenience.

\subsection{\centering Notations}\label{notation}

\begin{enumerate}
\item Notation related to domains

We denote the co-layer part of $\Omega$ by $\Sigma_{\varepsilon}:=\{x\in\Omega,\text{dist}
(x,\partial\Omega)\geq \varepsilon\}$, while the region
$\Omega\setminus\Sigma_\varepsilon$ is known as
the layer part of $\Omega$.
There are two type cut-off functions used throughout the paper: One is
the so-called radial cut-off function $\psi_\varepsilon$
(defined in Lemma $\ref{lemma:cut-off}$); The other
is a general cut-off function, denoted by $\varphi_{\varepsilon}$, satisfying that
$\varphi_{\varepsilon}\in C^{1}_{0}(\Omega)$ and $\varphi_\varepsilon = 1$ on
$\Sigma_{\varepsilon/2}$ with
$|\nabla \varphi_{\varepsilon}|\lesssim 1/\varepsilon$.
Let supp$(f)$ represent the support of $f$, and we
denote supp$(\nabla\psi_\varepsilon)$ by $O_\varepsilon$.

\item Notation for special quantities

\begin{itemize}
  \item Let $F:=f-\nabla p_0$ be given with $p_0$ satisfying \eqref{pde1.2}, and
$G:=S_{\delta}(\varphi_{\varepsilon}F)$ with $\delta=\frac{\varepsilon}{4}$,
where $S_{\delta}$ is a smoothing operator\footnote{
Fix a nonnegative function $\zeta\in C_0^\infty(B(0,1/2))$ with
$\int dx\zeta(x) = 1$. For any $f\in L^p(\mathbb{R}^d)$ with $1\leq p<\infty$, we define the smoothing operator as
$S_\delta(f)(x):=\int dy f(x-y)\zeta_\delta(y)$,
where $\zeta_\delta(y)=\delta^{-d}
\zeta(y/\delta)$.
}, and $\varphi_{\varepsilon}$ is a general cut-off function defined above.
  \item Let $\phi^{\varepsilon}(x,t):=\phi(\frac{x}{\varepsilon},t)$,
$W^{\varepsilon}(x,t):=W(\frac{x}{\varepsilon},t)$, and
$\pi^\varepsilon(x,t):=\pi(\frac{x}{\varepsilon},t)$,
in which correctors $(W,\pi)$ and $\phi$ are defined in
the equations $\eqref{pde2.1}$ and $\eqref{divc1*}$, respectively;
We also introduce two important quantities:
\begin{equation}\label{pde3.8}
  \begin{aligned}
  J_1&:=\nabla \psi_{\varepsilon}\cdot\big[(W^{\varepsilon}-A)\ast G\big];\\
  J_2&:=\nabla \psi_{\varepsilon}\cdot (A\ast G)
  +\varepsilon\nabla \psi_{\varepsilon}\cdot
  \big(\phi^{\varepsilon}
  \ast_{2} \partial G\big)
  +\psi_{\varepsilon}\frac{A}{|Y_f|}\ast_{2} \partial G
  +\varepsilon\psi_{\varepsilon}\phi^{\varepsilon}
  \ast_{3} \partial^2 G.
  \end{aligned}
\end{equation}
(We recommend that readers initially skip the derivation of the
above expression and treat it merely as a notation. After reviewing Subsection $\ref{subsec:1.2}$ and Remark $\ref{remark:3.1}$, readers will naturally encounter the source of the expression at the beginning of Section $\ref{sec4}$.)
\end{itemize}

\item Notation for derivatives

Spatial derivatives:
  $\nabla v = (\partial_1 v, \cdots, \partial_d v)$ (or denoted by $\partial v$) is the gradient of $v$, where
  $\partial_i v = \partial v /\partial x_i$ denotes the
  $i^{\text{th}}$ derivative of $v$.
  $\nabla^2 v$ (or $\partial^2 v$)  denotes the Hessian matrix of $v$;
  $\nabla\cdot \vec{u}=\sum_{i=1}^d \partial_i u_i$
  denotes the divergence of $\vec{u}$, where
  $\vec{u} = (u_1,\cdots,u_d)$ is a vector-valued function.
Also, $\text{curl~}\vec{u}$ is identified with the 2-tensor
$\omega_{ij}=\partial_iu_j-\partial_j u_i$ for all $d\geq 2$. In particular, we have the following convention
\begin{equation*}
  \text{curl~}\vec{u}=\nabla\times \vec{u} \qquad \text{as}\quad d=3.
\end{equation*}
Temporal  derivatives: $\partial_t v:= \partial v/\partial t$ represents
the derivative with respect to the temporal variable. 
In the Appendix, the time derivative of $A$ is also denoted as 
$A'$.

\item Notation for spaces

We mention that this paper merely involves some simple Bochner spaces whose definition
is standard (see e.g. \cite[Subsection 5.9.2]{Evans10}); Note that $X_{per}$ represents its element being a periodic object,
where $X$ could be any Sobolev or H\"older space;
Also, $X^d$ indicates that its collected elements are d-dimensional, and $X/K$ represents a quotient space (see e.g. \cite{Chechkin-Piatnitski-Shamaev2007,Shen18}).

\item Notation for estimates

$\lesssim$ and $\gtrsim$ stand for $\leq$ and $\geq$ up to a multiplicative constant, which may depend on some given parameters introduced in the paper, but never on $\varepsilon$;  We write $\sim$ when both $\lesssim$ and $\gtrsim$ hold; We use $\ll$ instead of
$\lesssim$ when the inverse of multiplicative constant
is much larger than 1.

\item Notation for convolutions

Let $a,b$ be vectors, $A$ be a matrix, and $C,D$ be tensors (higher than second order); For the ease of the statement, we introduce the following notations:
\begin{equation}\label{notation:4.2}
\begin{aligned}
&a\ast_{1} b(t) := \int_{0}^{t}ds a(t-s)\cdot b(s);
&\quad&
A\ast b(t) := \int_{0}^{t}ds A(t-s)b(s); \\
&C\ast_{2} A(t) := \int_{0}^{t}ds C(t-s):A(s);
&\quad&
C\ast_{3} D(t) := \int_{0}^{t}ds C_{ijk}(t-s)D_{ijk}(s),
\end{aligned}
\end{equation}
where the notation ``$:$'' represents the tensor's inner product of second order, and Einstein's summation convention for repeated indices is used throughout.
Together with the convention on derivatives\footnote{
If the components of the gradient are involved in the inner product of a tensor
and the convolution operation at the same time,
we will use $\partial$ instead of $\nabla$ to stress this point,
such as shown in $\eqref{notation:4.1}$.}, we list the following quantities frequently appeared in this paper:
\begin{equation}\label{notation:4.1}
\begin{aligned}
&\big(W^\varepsilon\ast G\big)(x,t)
:= \int_{0}^{t}dsW(x/\varepsilon,t-s)G(x,s);
&\qquad&(\text{vector})\\
&\big(\pi^{\varepsilon}\ast_{1} G\big)(x,t)
:= \int_{0}^{t}ds\pi_{j}(x/\varepsilon,t-s) G_j(x,s); &\qquad&(\text{scalar})\\
&\big(\phi^\varepsilon\ast_2\partial G\big)(x,t)
:= \int_{0}^{t}ds\phi_{k,j}(x/\varepsilon,t-s)\partial_kG_j(x,s);
&\qquad&(\text{vector})\\
&\big(\phi^{\varepsilon}\ast_3\partial^2G\big)(x,t)
:=\int_{0}^{t}ds\phi_{ik,j}(x/\varepsilon,t-s)\partial_{ik}^2G_j(x,s);
&\qquad&(\text{scalar})\\
&\big(\phi^\varepsilon\ast_2\nabla\partial G\big)(x,t)
:=\int_{0}^{t}ds\phi_{k,j}(x/\varepsilon,t-s)\otimes\nabla\partial_kG_j(x,s),
&\qquad&(\text{matrix})
\end{aligned}
\end{equation}
where the notation ``$\otimes$'' represents the tensor product. 

\end{enumerate}

\section{Sketch of the proof}\label{sec1}

\subsection{\centering On the error term (main ideas)}\label{subsec:1.2}

\noindent
This subsection clarifies the rationale for the form of the expansion in $\eqref{expan:1}$, particularly from an error analysis perspective, which is fundamental to the paper. In particular, in the following statements we simply treat the time variable as a parameter.

%

\medskip

Firstly, the qualitative result $\eqref{pri:1.2}$
suggests that
the error term should be the form of
\begin{equation}\label{expan:2}
  w_\varepsilon^{(1)} = u_\varepsilon - W^{\varepsilon}*F.
\end{equation}
(See the notations $F$ and $G$ in Subsection $\ref{notation}$.)
However, this kind of error term leads to the following inhomogeneous conditions:
\begin{equation*}
\begin{aligned}
\nabla\cdot w_\varepsilon^{(1)}&=-W^\varepsilon*_2\partial F
\text{\quad in~} \Omega_{\varepsilon}\times(0,T);\\
w_\varepsilon^{(1)} &=-W^\varepsilon* F
\text{\qquad on~} \partial\Omega\times(0,T), \qquad T>0.
\end{aligned}
\end{equation*}

\medskip

Secondly, to correct the inhomogeneous conditions above,
a more precise formulation of $\eqref{expan:1}$ is as follows:

\tikzstyle{every picture}+=[remember picture]

\everymath{\displaystyle}

\begin{itemize}
    \item [] \footnotesize zero-order expansion term
        \tikz\node [fill=blue!20,draw,circle] (n1) {};
\hspace{6cm} first-order expansion term
        \tikz\node [fill=green!20,draw,circle] (n3) {};
\end{itemize}
\begin{equation*}
w_\varepsilon = u_{\varepsilon}-
        \tikz[baseline]{
            \node[fill=blue!20,anchor=base] (t1)
            {$\psi_{\varepsilon}W^{\varepsilon}\ast G$};
        } +
        \tikz[baseline]{
            \node[fill=red!20,anchor=base] (t2)
            {$\hat{\xi_{\varepsilon}}+\hat{\eta_{\varepsilon}}$};
        } -
        \tikz[baseline]{
            \node[fill=green!20,anchor=base] (t3)
            {$\varepsilon\psi_{\varepsilon}\phi^\varepsilon \ast_{2}\partial G$.};
        }
\end{equation*}
\begin{itemize}
    \item [] \footnotesize boundary layer terms
        \tikz\node [fill=red!20,draw,circle] (n2) {};
\end{itemize}
\begin{tikzpicture}[overlay]
        \path[->] (n1) edge [bend left] (t1);
        \path[->] (n2) edge [bend right] (t2);
        \path[->] (n3) edge [bend left] (t3);
\end{tikzpicture}
The analogous pattern was first identified by J.-L. Lions \cite[pp. 147]{Lions81} for stationary Stokes equations in a specific domain. We will elucidate this error expansion through a structured ``questions $\&$ answers'' format.

\everymath{}

\begin{itemize}
  \item Why do we introduce the \emph{radial} cut-off function $\psi_\varepsilon$?
   \begin{itemize}
     \item To homogenize the inhomogeneous boundary condition induced  by $W^{\varepsilon}\ast G+\varepsilon
\phi^\varepsilon\ast_{2} \partial G$ on $\partial\Omega$;
     \item By virtue of the property: $\nabla \psi_{\varepsilon}=-|\nabla \psi_{\varepsilon}|\vec{n}$ near $\partial\Omega$, we can derive that
\begin{equation*}
\|\nabla\psi_\varepsilon \cdot u_0\|_{L^2(\Omega)} = O(1)
\end{equation*}
from the boundary condition $\vec{n}\cdot u_0 = 0$ on $\partial\Omega$. The advantage of introducing $\psi_\varepsilon$
becomes evident when compared to a general cut-off function $\varphi_\varepsilon$:
\begin{equation*}
  \|\nabla\varphi_\varepsilon \cdot u_0\|_{L^2(\Omega)}
  =O(\varepsilon^{-1/2})\qquad \rightsquigarrow\qquad
  \|\nabla\psi_\varepsilon \cdot u_0\|_{L^2(\Omega)}=O(1)
\end{equation*}
This structural benefit has been crucial in boundary-layer corrector estimates  (see Lemma $\ref{lemma:5.1}$).
   \end{itemize}
  \item Why do we introduce the corrector of Bogovskii's operator $\phi$?
  \begin{itemize}
    \item Roughly speaking, it was used to correct inhomogeneous divergence part of $w_\varepsilon^{(1)}$. Using the divergence-free condition $A*_2\partial F = \nabla\cdot u_0 = 0$ in $\Omega$, we have
\begin{equation}\label{k-4}
 \nabla\cdot w_\varepsilon^{(1)}
 = -W^\varepsilon*_2\partial F
 = \big(|Y_f|^{-1}A -W^\varepsilon\big)*_2\partial F.
\end{equation}
This leads to the following equation:
\begin{equation}\label{divc1*}
\left\{
\begin{aligned}
\nabla\cdot\phi &=-W
+|Y_f|^{-1} A &\qquad\text{in}&\quad\omega\times(0,T);\\
\phi&=0&\qquad\text{on}&\quad\partial\omega\times(0,T).
\end{aligned}
\right.
\end{equation}
Then, one can substitute the right-hand side of $\eqref{k-4}$ for
$\varepsilon(\nabla\cdot\phi)^\varepsilon*_{2}\partial F$, further deriving that
\begin{equation*}
  \nabla\cdot \big( w_\varepsilon^{(1)}
  -\varepsilon
  \phi^\varepsilon*_{2}\partial F \big) =
  -\varepsilon\phi^\varepsilon*_{2}\partial^2 F.
\end{equation*}
Theoretically, this expansion can provide a first-order convergence rate. But for the sake of simplicity, the above formula does not take the cut-off function into account. A more detailed discussion on this is scheduled for Remark $\ref{remark:3.1}$, where we will further touch on the concept of the flux corrector.
%
%
  \end{itemize}
  \item Why do we introduce boundary-layer correctors associated with Bogovskii's  operator?
  \begin{itemize}
    \item To see this, let $w_\varepsilon^{(2)} = u_{\varepsilon}-\psi_{\varepsilon}
(W^{\varepsilon}\ast G+\varepsilon
\phi^\varepsilon\ast_{2} \partial G)$. There holds
\begin{equation}\label{pde:B}
\left\{\begin{aligned}
-\nabla\cdot w_\varepsilon^{(2)}&=J_1+J_2
&~&\text{\quad in~} \Omega_{\varepsilon}\times(0,T); \\
w_\varepsilon^{(2)} &=0
&~&\text{\quad on~} \partial\Omega_\varepsilon\times(0,T),
\end{aligned}\right.
\end{equation}
where $J_1$ and $J_2$ are defined in $\eqref{pde3.8}$;
    \item \textbf{The crucial idea} is to introduce a magical quantity
\begin{equation*}
   \sum_{i} (\dashint_{O_{\varepsilon}^i}J_1)
1_{O_{\varepsilon}^i},
\end{equation*}
where $1_{O_{\varepsilon}^i}$ is the indicator function of $O_{\varepsilon}^i$,
and $\{O_{\varepsilon}^i\}$ is a family of the non-overlap subsets of $O_\varepsilon =\text{supp}(J_1)$, such that
\begin{equation}\label{k-5}
\begin{aligned}
&O_\varepsilon = \bigcup_{i}O_\varepsilon^{i}
\quad\text{and}\quad
|O_\varepsilon^i| \sim \varepsilon^d,
\end{aligned}
\end{equation}
in which $O_\varepsilon^{i}$ is approximately an $d$-dimensional cube obtained by cutting the $O_\varepsilon$ in the normal direction $\vec{n}$ associated with $\partial\Omega$ (see Remark $\ref{remark:4.1}$).
Then, we have
\begin{equation*}
  J_1+J_2 = \underbrace{J_1 - \sum_{i} (\dashint_{O_{\varepsilon}^i}J_1)
1_{O_{\varepsilon}^i}}_{\Pi}
+ \underbrace{\sum_{i} (\dashint_{O_{\varepsilon}^i}J_1)
1_{O_{\varepsilon}^i} + J_2}_{\mathcal{H}}.
\end{equation*}
Thereupon, it is possible to split the equation $\eqref{pde:B}$
to have some meaningful estimates, since $\Pi$ and $\mathcal{H}$ satisfy the compatibility conditions:
$\int_{O_\varepsilon}\Pi = 0$ and
$\int_{\Omega_\varepsilon}\mathcal{H} = 0$, respectively;

    \item Taking into account of temporal variable,
    for any $t\geq 0$, we first construct
solutions $(\xi_{\varepsilon},\eta_{\varepsilon})$ to
\begin{equation}\label{pde:C}
(1)\left\{\begin{aligned}
 \nabla\cdot \xi_{\varepsilon}(\cdot,t) &= \partial_t\mathcal{H}(\cdot,t) &~&\text{in}~\Omega_\varepsilon;\\
  \xi_{\varepsilon} &= 0 &~&\text{in}~\partial \Omega_\varepsilon,
\end{aligned}\right.
\quad \text{and} \quad
(2)\left\{\begin{aligned}
 \nabla\cdot \eta_{\varepsilon}(\cdot,t) &= \partial_t\Pi(\cdot,t) &~&\text{in}~O_\varepsilon;\\
  \eta_{\varepsilon} &= 0 &~&\text{in}~\partial O_\varepsilon.
\end{aligned}\right.
\end{equation}
Then, let $(\hat{\xi_{\varepsilon}},\hat{\eta_{\varepsilon}})$ be the primitive function of
$(\xi_{\varepsilon},\eta_{\varepsilon})$, defined as follows:
\begin{equation}\label{pde3.11}
  \hat{\xi_{\varepsilon}}(\cdot,t)=\int_{0}^{t}\xi_{\varepsilon}(\cdot,s)ds;\quad
  \hat{\eta_{\varepsilon}}(\cdot,t)=\int_{0}^{t}\eta_{\varepsilon}(\cdot,s)ds.
\end{equation}
In this regard, one can
verify that $-(\hat{\xi_{\varepsilon}}+\hat{\eta_{\varepsilon}})$ satisfies
the equation $\eqref{pde:B}$. Therefore, we have the error term
precisely in the form of
$  w_\varepsilon = w_\varepsilon^{(2)}+ \hat{\xi_{\varepsilon}}+\hat{\eta_{\varepsilon}}$.
\end{itemize}
\end{itemize}

\begin{remark}
\emph{In terms of the equation $\eqref{pde:B}$,
if $w_\varepsilon^{(2)}$ is regarded as an unknown vector field,
the problem ``$\nabla\cdot v =g$'' has no uniqueness of the solution  
(see e.g. \cite[Theorem III.3.1]{Galdi11}).
Thus, there is no contradiction with that the constructed vector field $-(\hat{\xi_{\varepsilon}}+\hat{\eta_{\varepsilon}})$ satisfies
the same equation $\eqref{pde:B}$ as $w_\varepsilon^{(2)}$ does.
In fact, the intractable problem is that we cannot simply grasp satisfactory  estimates from the equation $\eqref{pde:B}$. Instead, by the constructed object $(\xi_{\varepsilon},\eta_{\varepsilon})$, the problem has been reduced to studying each of them in $\eqref{pde:C}$, where 
the so-called Bogovskii's operator is involved (see e.g. \cite{Bogovskii79} 
or \cite[Section III.3]{Galdi11}).   
We finally mention that the methods of constructing solutions
for (1) and (2) in $\eqref{pde:C}$ are different (see Section $\ref{sec4}$).}
\end{remark}


\subsection{\centering Outline the proof of Theorem $\ref{thm1}$}\label{subsec:1.4}

\noindent
We start from introducing some terminologies used throughout the paper to distinguish the different types of correctors:
\begin{enumerate}
  \item Corrector, denoted by
$(W,\pi)$, defined in $\eqref{pde2.1}$;
  \item Corrector of Bogovskii's  operator\footnote{
In \cite{Balazi-Allaire-Omnes24,Shen20}, the same type corrector
was constructed by using Stokes equations. To the authors' best knowledge,
this type corrector was originally introduced by
E. Maru\v{s}i\'{c}-Paloka, A. Mikeli\'{c} \cite{Marusi-Mikelic96}.}, denoted by $\phi$, defined in $\eqref{divc1*}$;
  \item Boundary-layer correctors, denoted by
$(\hat{\xi}_{\varepsilon},\hat{\eta}_{\varepsilon})$, defined in
$\eqref{pde3.11}$, respectively.
\end{enumerate}

\begin{itemize}
  \item \textbf{Main steps:}
  \begin{itemize}
    \item \textbf{Step 1.} \emph{Find a good formula on the first-order expansion.}
        Precisely speaking, we derive the following error term
        associated with velocity and pressure, respectively.
\begin{equation}\label{eq:A}
\left\{
  \begin{aligned}
&w_{\varepsilon}=u_{\varepsilon}-\psi_{\varepsilon}
\big(W^{\varepsilon}\ast G+\varepsilon
\phi^\varepsilon\ast_{2} \partial G\big)
+\hat{\xi_{\varepsilon}}
+\hat{\eta_{\varepsilon}};\\
&q_{\varepsilon}=p_\varepsilon-p_0-
\varepsilon\psi_{\varepsilon}\pi^{\varepsilon}\ast_{1} G,
\end{aligned}\right.
\end{equation}
in which $(\hat{\xi_{\varepsilon}},\hat{\eta_{\varepsilon}})$ are 
boundary-layer correctors, and $\psi_\varepsilon$ is known as the radial cut-off function (see Lemma $\ref{lemma:cut-off}$ for the existence).
Also, the notations $W^\varepsilon$, $G$, $*_2$, etc
have been defined in Subsection $\ref{notation}$. A heuristic explanation on $\eqref{eq:A}$ has been presented in Subsection $\ref{subsec:1.2}$.

 \item  \textbf{Step 2.}  \emph{Derive the equations that error term $(w_\varepsilon,q_\varepsilon)$ satisfies.}
Plugging the error term $\eqref{eq:A}$
into unsteady Stokes operators in $\eqref{pde1.1}$, there holds
\begin{equation}\label{pde:A}
\left\{
\begin{aligned}
\partial_t w_{\varepsilon}
-\varepsilon^2\Delta w_{\varepsilon}+\nabla q_{\varepsilon}&=I_1+\varepsilon I_2+\varepsilon^2I_3+\varepsilon^3 I_4,&\text{in}&\quad\Omega_{\varepsilon}\times(0,T];\\
\nabla\cdot w_{\varepsilon}&=0,&\text{in}&\quad
\Omega_{\varepsilon}\times(0,T],
\end{aligned}
\right.
\end{equation}
with zero initial-boundary data and $T>0$,
where $I_1$, $I_2$, $I_3$ and $I_4$ have been formulated by
\begin{equation*}
\begin{aligned}
I_1:=&f-\nabla p_0-\psi_{\varepsilon}W^{\varepsilon}(\cdot,0)G
+\xi_{\varepsilon}+\eta_{\varepsilon};\\
I_2:=&-\psi_{\varepsilon}
\Big[
\partial_t\phi^{\varepsilon}\ast_2\partial G+
\phi^{\varepsilon}(\cdot,0):\partial G
\Big]+2\psi_{\varepsilon}(\partial W)^{\varepsilon}\ast_2\partial G
-
\nabla\psi_{\varepsilon}
\pi^{\varepsilon}\ast_1 G
-\psi_{\varepsilon}\nabla G\ast\pi^{\varepsilon};\\
I_3:=&\psi_{\varepsilon}
\nabla\cdot
\Big\{
(\nabla\phi)^{\varepsilon}\ast_2\partial G
\Big\}
+\nabla\cdot
\Big\{
\nabla\psi_{\varepsilon}\otimes \big(W^{\varepsilon}\ast G\big)
\Big\}
+\nabla\big(W^{\varepsilon}\ast G\big)\nabla
\psi_{\varepsilon}
+\psi_{\varepsilon} W^{\varepsilon}\ast\Delta G
-\Delta\hat{\xi_{\varepsilon}}
-\Delta\hat{\eta_{\varepsilon}};\\
I_4:=&\nabla
\big(\phi^{\varepsilon}\ast_2\partial G\big)\nabla\psi_{\varepsilon}+
\nabla\cdot\Big\{
\nabla\psi_{\varepsilon}\otimes
\big(\phi^{\varepsilon}\ast_2\partial G\big)
\Big\}
+\psi_{\varepsilon}
\nabla\cdot\Big\{\phi^{\varepsilon}\ast_2\nabla\partial G\Big\}.
\end{aligned}
\end{equation*}
(See the details in the proof Lemma $\ref{lemma:4.3}$, which can be temporarily skipped for an initial reading.)

\item \textbf{Step 3.}
\emph{Rewriting the right-hand side of $\eqref{pde:A}$
as the form of}
\begin{equation*}
\Theta+\varepsilon\nabla\cdot \Lambda+
\varepsilon\psi_{\varepsilon}
\nabla\cdot \Xi,
\end{equation*}
it follows from the energy estimate (see e.g. \cite{Temam79}, or
Lemma $\ref{lemma3.1}$) that
\begin{equation*}
\begin{aligned}
\|w_{\varepsilon}\|_{L^{\infty}
(0,T;L^2(\Omega_{\varepsilon}))}
+\varepsilon
\|\nabla w_{\varepsilon}\|_{L^{2}
(0,T;L^2(\Omega_{\varepsilon}))}
\lesssim
\|(\Theta,\Lambda,\psi_{\varepsilon}\Xi)\|_{L^2(0,T;L^2(\Omega_{\varepsilon}))}.
\end{aligned}
\end{equation*}
\emph{Then, reduce the error estimates
to show}:
\begin{equation}\label{pri:1}
\|(\Theta,\Lambda,\psi_\varepsilon\Xi)\|_{L^2(0,T;L^2(\Omega_{\varepsilon}))}
= O(\varepsilon^{1/2}).
\end{equation}
(See the details in Subsection $\ref{subsec:5.2}$.)

\item \textbf{Step 4.}
\emph{The desired estimate $\eqref{pri:1}$ consequently relies on
the following three type estimates}:
\begin{itemize}
  \item Smoothness of the correctors, i.e., for any $1<q<\infty$,
  \begin{equation}\label{pri:7}
\|(\partial_t W_j,\nabla \pi_j)\|_{L^1(0,T;L^q(Y_f))}
+\|(\partial_t\Phi,\partial_t\phi)\|_{L^1(0,T;W^{1,q}(Y_f))}\lesssim 1,
  \end{equation}
where $\Phi$ is referred to the flux corrector, introduced in Proposition $\ref{prop2.1}$.

(See Propositions $\ref{prop2.1}$ and $\ref{prop3.1}$.)
  \item Regularity estimates on boundary-layer correctors,
i.e.,
  \begin{equation}\label{pri:6}
  \|(\xi_{\varepsilon},\eta_{\varepsilon})\|_{L^2(0,T;L^2(\Omega_\varepsilon))}
  +\varepsilon\|(\nabla\widehat\xi_{\varepsilon},\nabla\widehat\eta_{\varepsilon})\|_{L^2(0,T;L^2(\Omega_\varepsilon))}
  =O(\varepsilon^{1/2}).
  \end{equation}
(See Propositions
$\ref{lemma3.6-1}$, $\ref{lemma3.6-2}$ and
$\ref{lemma3.6-3}$.)
\item Well-posedness of the homogenized system $\eqref{pde1.2}$,
i.e.,
  \begin{equation}\label{pri:8}
\|p_0\|_{L^q(0,T;C^{m+1,\alpha}(\bar{\Omega}))}\lesssim
\|f\|_{L^q(0,T;C^{m,\alpha}(\bar{\Omega}))},
\quad \alpha\in(0,1)\text{~and~}m\geq 1.
\end{equation}
(See Proposition $\ref{P:1}$.)
\end{itemize}

\item \textbf{Step 5.} \emph{Show the error estimates on the pressure term.}

    We first establish the error estimates on the inertial term as follows:
\begin{equation}\label{pri:2}
 \|\partial_t w_\varepsilon\|_{L^2(\Omega_{\varepsilon,T})}
 = O(\varepsilon^{1/2}).
\end{equation}
Then, base upon the estimates $\eqref{pri:1}$ and $\eqref{pri:2}$,
a duality argument consequently leads to the error on the pressure
term, i.e.,
\begin{equation*}
 \inf_{c\in\mathbb{R}}\|q_\varepsilon-c\|_{L^2(\Omega_{\varepsilon,T})}
 = O(\varepsilon^{1/2}).
\end{equation*}
(See the details in Subsection $\ref{subsec:5.3}$.)
\end{itemize}

\end{itemize}

\section{Correctors}\label{sec2}


\begin{proposition}[corrector $\&$ flux corrector]\label{prop2.1}
Let $0<T<\infty$, $d=2$ or $3$, and $1<q<\infty$. Suppose that $(W_j,\pi_j)$ is a weak solution of \eqref{pde2.1}
with $j=1,\cdots,d$.
Then there holds the energy estimate
\begin{equation}\label{pde2.14}
\begin{aligned}
&\|W_j\|_{L^{\infty}(0,T;L^2(Y_f))}+
\|W_j\|_{L^{2}(0,T;H^{1}(Y_f))}\lesssim1,
\end{aligned}
\end{equation}
where the multiplicative constant depends only on $d$ and $T$.
Also, for any $\alpha>(1/3)$,
the weak solution possesses higher regularity estimates:
\begin{subequations}
\begin{align}
\|W_j\|_{L^2(0,T;W^{1,q}(Y_f))}
+\|t^{\alpha}\partial_t W_j\|_{L^2(0,T;L^{2}(Y_f))}
+\|t^{\alpha}\pi_j\|_{L^2(0,T;L^{2}(Y_f)/\mathbb{R})}\lesssim 1;\label{pde3.33a}\\
\|\partial_t W_j\|_{L^1(0,T;L^q(Y_f))}
+ \|\nabla^2 W_j\|_{L^1(0,T;L^q(Y_f))}
+ \|\nabla \pi_j\|_{L^1(0,T;L^{q}(Y_f))}\lesssim 1,\label{pde3.33b}
\end{align}
\end{subequations}
in which the up to constant relies on $q$, $d$, $T$, and the character of $Y_f$.  Moreover, let
$\tilde{W}_j$ be the zero-extension of $W_j$ on the solid part $Y_s$.
For each $t\in(0,T]$, we can define  $b_{ij}(\cdot,t):=\tilde{W}_j(\cdot,t)\cdot e_i-A_{ij}(t)$.  Then,
there exists $\Phi=\{\Phi_{ki,j}\}_{1\leq i,j,k\leq d}$
with $\Phi_{ki,j}(\cdot,t)\in H^{1}_{\emph{loc}}(\mathbb{R}^d)$,
which is also 1-periodic and satisfies
\begin{equation}\label{anti-sym}
 \nabla_k\Phi_{ki,j}=
 b_{ij}\quad \text{and}\quad
 \Phi_{ki,j}=-\Phi_{ik,j},
\end{equation}
as well as, the following regularity estimates:
\begin{subequations}
\begin{align}
&\|\partial_t\Phi\|_{L^1(0,T;H^1(Y))}
+\|\Phi\|_{L^1(0,T;H^1(Y))}
+\|\Phi(\cdot,0)\|_{H^1(Y)}\lesssim 1;\label{pri:3.5a}\\
&\|\partial_t\Phi\|_{L^1(0,T;W^{1,q}
(Y))}+\|\Phi\|_{L^1(0,T;W^{1,q}(Y))}
+\|\Phi(\cdot,0)\|_{L^\infty(Y)}
\lesssim 1.
\label{pri:3.5b}
\end{align}
\end{subequations}
Consequently, we also have $\Phi_{ki,j}\in C([0,T];C_{per}(Y))$.
\end{proposition}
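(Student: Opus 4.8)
The plan is to separate the genuinely delicate part — the regularity of $(W_j,\pi_j)$ near the initial time, where the incompatibility of the data is felt — from the routine part, which deduces everything about the flux corrector $\Phi$ from that regularity. Since all the data in \eqref{pde2.1} are $1$-periodic, I regard $(W_j,\pi_j)$ as the $Y$-periodic solution on $Y_f$, with the Dirichlet condition on $\partial Y_s$, periodicity across $\partial Y$, and $W_j|_{t=0}=e_j$. Testing \eqref{pde2.1} with $W_j$ and integrating by parts, the pressure drops out because $W_j$ is divergence free with the stated boundary behaviour; as $e_j\in L^\infty(Y_f)\subset L^2(Y_f)$ this gives $\|W_j\|_{L^\infty(0,T;L^2(Y_f))}+\|W_j\|_{L^2(0,T;H^1(Y_f))}\lesssim\|e_j\|_{L^2(Y_f)}\lesssim 1$, which is \eqref{pde2.14}.

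\emph{Regularity of $(W_j,\pi_j)$ near $t=0$ — the main obstacle.} Write $W_j(\cdot,t)=e^{-t\mathcal{A}}Pe_j$, with $\mathcal{A}$ the Stokes operator on $Y_f$ associated with the present boundary conditions and $P$ the corresponding Helmholtz projection. Because $e_j$ is constant, the scalar potential in the decomposition $e_j=Pe_j+\nabla\kappa_j$ solves an elliptic (Neumann) problem with smooth data on the $C^2$ set $Y_f$, so $Pe_j\in W^{1,r}(Y_f)$ for every $r<\infty$; since $Pe_j$ does not vanish on $\partial Y_s$, the interpolation theory of fractional powers of the Stokes operator places it in $D(\mathcal{A}^\theta)$ precisely for $\theta<1/4$. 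Combined with the analyticity of the Stokes semigroup, this yields the pointwise-in-time bounds $\|\partial_t W_j(t)\|_{L^2(Y_f)}\lesssim t^{\theta-1}$, $\|\nabla W_j(t)\|_{L^2(Y_f)}\lesssim t^{\theta-1/2}$, and the analogous $L^q$ bounds $\|\partial_t W_j(t)\|_{L^q(Y_f)}\lesssim t^{\theta_q-1}$, $\|\nabla W_j(t)\|_{L^q(Y_f)}\lesssim t^{\theta_q-1/2}$ for some $\theta_q>0$. Testing \eqref{pde2.1} with $t^{2\alpha}\partial_t W_j$ (the pressure again drops out) and using the $L^2$ bound to absorb the resulting term $\int_0^T t^{2\alpha-1}\|\nabla W_j(t)\|_{L^2(Y_f)}^2\,dt$ produces $\|t^\alpha\partial_t W_j\|_{L^2(0,T;L^2(Y_f))}\lesssim 1$ in the stated range $\alpha>1/3$, the restriction being forced by the $\sqrt t$-thick boundary layer that $W_j$ develops at $\partial Y_s$ in the absence of a compatibility condition; the bound $\|t^\alpha\pi_j\|_{L^2(0,T;L^2(Y_f)/\mathbb{R})}\lesssim 1$ then follows by recovering $\pi_j$ from the equation through the inf--sup (Bogovskii) inequality, $\|\pi_j(t)\|_{L^2(Y_f)/\mathbb{R}}\lesssim\|\partial_t W_j(t)\|_{H^{-1}(Y_f)}+\|\nabla W_j(t)\|_{L^2(Y_f)}\lesssim t^{\theta-1/2}$, which is integrable after multiplication by $t^\alpha$. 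Together with $\|\nabla W_j\|_{L^2(0,T;L^q(Y_f))}\lesssim 1$ (from $\|\nabla W_j(t)\|_{L^q}\lesssim t^{\theta_q-1/2}\in L^2(0,T)$) this gives \eqref{pde3.33a}. For \eqref{pde3.33b} I freeze $t$ and read \eqref{pde2.1} as the \emph{stationary} Stokes system $-\Delta W_j(t)+\nabla\pi_j(t)=-\partial_t W_j(t)$, $\nabla\cdot W_j(t)=0$ with the periodic/Dirichlet conditions; the $L^q$ theory for Stokes on $Y_f$ gives $\|\nabla^2 W_j(t)\|_{L^q(Y_f)}+\|\nabla\pi_j(t)\|_{L^q(Y_f)}\lesssim\|\partial_t W_j(t)\|_{L^q(Y_f)}+\|W_j(t)\|_{L^q(Y_f)}$, and the $L^q$-smoothing bound $\|\partial_t W_j(t)\|_{L^q(Y_f)}\lesssim t^{\theta_q-1}$ is in $L^1(0,T)$, so \eqref{pde3.33b} follows.

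\emph{The flux corrector $\Phi$.} Let $\tilde W_j$ be the zero extension of $W_j$; since $W_j$ is divergence free and vanishes on $\partial Y_s$, $\tilde W_j$ is $Y$-periodic and divergence free on $\mathbb{R}^d$, and by \eqref{pde2.2} the field $b_{\cdot j}(\cdot,t):=\tilde W_j(\cdot,t)-A_{\cdot j}(t)$ is $Y$-periodic, divergence free and of zero mean on $Y$. For each $t$, let $\theta_{ij}(\cdot,t)$ be the mean-zero $Y$-periodic solution of $\Delta\theta_{ij}=b_{ij}$ and set $\Phi_{ki,j}:=\nabla_k\theta_{ij}-\nabla_i\theta_{kj}$; this is $1$-periodic and antisymmetric in $(k,i)$, and $\nabla_k\Phi_{ki,j}=\Delta\theta_{ij}-\nabla_i(\nabla_k\theta_{kj})=b_{ij}$, because $\nabla_k\theta_{kj}$ is $Y$-periodic, of zero mean and harmonic (its Laplacian is $\nabla_k b_{kj}=0$), hence vanishes; this proves \eqref{anti-sym}. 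By periodic Calder\'on--Zygmund, $\|\Phi(\cdot,t)\|_{W^{1,q}(Y)}\lesssim\|b_{\cdot j}(\cdot,t)\|_{L^q(Y)}\lesssim\|W_j(\cdot,t)\|_{L^q(Y_f)}$ (using $|A_{ij}(t)|\lesssim\|W_j(\cdot,t)\|_{L^q(Y_f)}$), and since the construction is linear in $t$, $\|\partial_t\Phi(\cdot,t)\|_{W^{1,q}(Y)}\lesssim\|\partial_t W_j(\cdot,t)\|_{L^q(Y_f)}$; integrating in $t$ and invoking \eqref{pde2.14}, \eqref{pde3.33a} and \eqref{pde3.33b} yields \eqref{pri:3.5a} (the case $q=2$) and \eqref{pri:3.5b}, while $\|\Phi(\cdot,0)\|_{H^1(Y)}\lesssim\|b_{\cdot j}(\cdot,0)\|_{L^2(Y)}\lesssim 1$ since $W_j(\cdot,0)=e_j$. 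Finally, choosing $q>d$ and using $W^{1,q}(Y)\hookrightarrow C(Y)$ gives $\Phi_{ki,j},\partial_t\Phi_{ki,j}\in L^1(0,T;C_{per}(Y))$.

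The genuinely hard point is the second step: without a compatibility condition between $W_j|_{t=0}=e_j$ and $W_j|_{\partial Y_s}=0$ one cannot reduce the regularity of $(W_j,\pi_j)$ to interior estimates, and one must quantify exactly how the Stokes semigroup fails to be smooth at $t=0$ on the perforated cell; the energy estimate and the construction and estimates for $\Phi$ are then routine consequences.
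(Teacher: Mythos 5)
Your argument is sound and reaches all the stated conclusions, but the central quantitative step is carried out by a genuinely different route than the paper's. The paper never invokes fractional powers of the Stokes operator: it proves the decay $\|\nabla W_j(\cdot,t)\|_{L^2(Y_f)}\lesssim t^{-p/(3p-2)}$ (Lemma \ref{lemma3.3}) and $\|\partial_t W_j(\cdot,t)\|_{L^r}\lesssim t^{-1+\lambda(1-\gamma)}$ (Lemma \ref{lemma2.2}) by splitting $Y_f$ into an interior region of distance $\rho$ from $\partial Y_s$ (controlled via Caccioppoli/curl estimates, which give good time decay but a bad factor $\rho^{-1}$) and a boundary strip (controlled by H\"older plus the raw semigroup bound, giving $\rho^{1/2-1/p}t^{-1/2}$), and then optimizing in $\rho$; the weighted estimates of Corollary \ref{lemma3.4} then follow from a Fubini argument, and the $L^q$ bounds by interpolation. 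You instead place $Pe_j$ in $D(\mathcal{A}^\theta)$ for $\theta<1/4$ (resp.\ $\theta_q<1/(2q)$ in $L^q$) via the characterization of the domains of fractional powers without boundary conditions below the threshold, and read off the decay from analyticity of the semigroup. Where it applies, your route is sharper: it gives $\|\nabla W_j(t)\|_{L^2}\lesssim t^{-1/4-\epsilon}$ (the model half-space computation shows $t^{-1/4}$ is optimal) versus the paper's $t^{-1/3-\epsilon}$, and would yield the weighted estimate for all $\alpha>1/4$ rather than $\alpha>1/3$. The trade-off is that you lean on the Fujita--Kato/Giga characterization of $D(\mathcal{A}_q^\theta)$ and complex interpolation with the solenoidal constraint on a cell with mixed periodic/Dirichlet boundary conditions; this is exactly the "advanced analysis" machinery the paper states it deliberately avoids, and its validity in this non-standard geometry is the one thing you would need to justify rather than cite. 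The energy estimate, the passage from $\|\partial_tW_j(t)\|_{L^q}$ to $\nabla^2W_j,\nabla\pi_j$ via the stationary $L^q$ Stokes theory, and the entire construction and estimation of $\Phi$ coincide with the paper's (Lemma \ref{flux}).

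One sub-step deserves a correction: the bound $\|\pi_j(t)\|_{L^2(Y_f)/\mathbb{R}}\lesssim\|\partial_tW_j(t)\|_{H^{-1}(Y_f)}+\|\nabla W_j(t)\|_{L^2(Y_f)}\lesssim t^{\theta-1/2}$ is not justified as written, because the $H^{-1}$ duality in the inf--sup argument is against all of $H^1_0(Y_f)^d$, not only solenoidal fields, so you cannot identify $\|\partial_tW_j\|_{H^{-1}}$ with $\|\mathcal{A}^{1/2}W_j\|_{L^2}$. This is harmless: the cruder bound $\|\pi_j(t)\|_{L^2/\mathbb{R}}\lesssim\|\partial_tW_j(t)\|_{L^2}\lesssim t^{\theta-1}$ (which is how the paper proceeds, via a Bogovskii test field) still makes $t^{2\alpha}\|\pi_j(t)\|^2$ integrable for every $\alpha>1/2-\theta$, in particular for $\alpha>1/3$.
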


The estimate $\eqref{pde2.14}$ directly follows from
a traditional argument while the pressure is merely estimated by
$H^{-1}$-norm
with respect to the temporal variable (see e.g. \cite{Temam79}). Although
the incompatibility will bring the negative influence to the solution's
regularity,
we infer that this effect only occurs
in the region where the solution has just begun to evolve
from the initial value. \emph{The main idea} on the
estimates $\eqref{pde3.33a}$ and $\eqref{pde3.33b}$ is to
employ interior regularity estimates to improve
the associated semigroup estimates in $L^2$-norm
(see Subsections $\ref{subsec:3.1}$ and $\ref{subsec:3.2}$),
and then use interpolation arguments to get the desired results,
which, to the authors' best knowledge, seems to be new even for
the general evolutionary Stokes equations without compatibility
conditions.

\begin{remark}\label{remark:3.1}
\emph{We refer to $\Phi$ as the flux corrector, and its estimates are grounded in correctors'.
To elucidate its role, we observe that
\begin{equation}\label{}
  \nabla\cdot\big(u_\varepsilon-\psi_\varepsilon W^\varepsilon*F\big)
  =\underbrace{\nabla\psi_\varepsilon\cdot (A-W^\varepsilon)*F
  -\nabla\psi_\varepsilon\cdot A*F}_{\text{layer~part}}
  +\underbrace{\psi_\varepsilon \big(|Y_f|^{-1}A-W^\varepsilon\big)*_2\partial F}_{\text{co-layer~part}}.
\end{equation}
By definition, the flux corrector plays a crucial role in addressing the boundary layer effects. Specifically, it collaborates with a corrector of Bogovskii's operator, introduced below, to correct the inhomogeneous divergence component in the error expansion. Notably, unlike the corrector of Bogovskii's operator, the flux corrector does not appear explicitly in the final form of the error expansion. This feature is consistent with the flux corrector introduced in the context of elliptic homogenization problems. }
\end{remark}

\begin{proposition}[corrector of Bogovskii's operator]\label{prop3.1}
Let $0<T<\infty$, $d=2$ or $3$, and $1<q<\infty$.
Suppose that the corrector $W$ and the permeability tensor $A$ are given as in Theorem $\ref{thm3}$.
Then, for any $t\geq 0$, there exists at least one weak solution $\phi$ associated with $W$ and $A$ by the following equations:
\begin{equation}\label{divc1}
\left\{
\begin{aligned}
\nabla\cdot\phi_{i,j}(\cdot,t) &=-W_{ij}(\cdot,t)
+|Y_f|^{-1} A_{ij}(t)&\qquad\text{in}&\quad\omega;\\
\phi_{i,j}(\cdot,t)&=0&\qquad\text{on}&\quad\partial\omega,
\end{aligned}
\right.
\end{equation}
which is merely the component form of $\eqref{divc1*}$,
with $1\leq i,j,k\leq d$, whose solution is
1-periodic and satisfies $\phi_{ki,j} (\cdot,t)\in H^1_{\text{per}}(Y_f)$.
Moreover, there holds refined regularity estimate
\begin{equation}\label{pri:4.2}
\|\phi\|_{L^1(0,T;W^{1,q}(Y_f))}
+\|\phi(\cdot,0)\|_{W^{1,q}(Y_f)}
+\|\partial_t\phi\|_{L^1(0,T;W^{1,q}(Y_f))}\lesssim 1,
\end{equation}
and we concludes that $\partial_t\phi_{ki,j}\in L^1(0,T;C_{per}(Y_f))$
and $\phi_{ki,j}\in C([0,T];W_{\text{per}}^{1,q}(Y_f))$.
\end{proposition}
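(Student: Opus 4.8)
\textbf{Proof proposal for Proposition \ref{prop3.1}.}

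The plan is to build $\phi_{i,j}(\cdot,t)$ cell by cell using Bogovskii's operator on $Y_f$, applied to the right-hand side $g_{ij}(\cdot,t):=-W_{ij}(\cdot,t)+|Y_f|^{-1}A_{ij}(t)$, and then to propagate the regularity already established for $(W,\pi)$ in Proposition \ref{prop2.1} through the (bounded, linear) Bogovskii solution operator. First I would check the solvability condition: $\int_{Y_f}g_{ij}(y,t)\,dy = -\int_{Y_f}W_{ij}(y,t)\,dy + |Y_f|^{-1}A_{ij}(t)|Y_f| = -A_{ij}(t)+A_{ij}(t)=0$ by the defining formula \eqref{pde2.2} for $A$, so for each fixed $t\ge 0$ the problem \eqref{divc1} on the periodicity cell $Y_f$ has at least one solution with the stated periodicity and $\phi_{i,j}(\cdot,t)\in H^1_{\mathrm{per}}(Y_f)$; extending by the periodic structure gives the solution on $\omega$. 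I would fix, once and for all, the Bogovskii operator $\mathcal{B}:L^q_0(Y_f)\to W^{1,q}_0(Y_f)^d$ (mean-zero source to divergence, with the classical bound $\|\mathcal{B}h\|_{W^{1,q}(Y_f)}\lesssim\|h\|_{L^q(Y_f)}$ valid for all $1<q<\infty$ on a bounded Lipschitz, a fortiori $C^2$, domain), and \emph{define} $\phi_{i,j}(\cdot,t):=\mathcal{B}\big(g_{ij}(\cdot,t)\big)$; the antisymmetric-index structure $\phi_{ki,j}=-\phi_{ik,j}$ is not claimed here (unlike for $\Phi$), so I would not need to symmetrize — the components $\phi_{ki,j}$ are just the $d$ scalar entries of the vector field $\phi_{i,j}$.

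For the regularity estimate \eqref{pri:4.2}, the key observation is that $\mathcal{B}$ is a fixed bounded linear operator acting only in the spatial variable, so it commutes with $\partial_t$ and with taking $L^1$-in-time norms. Hence
\begin{equation*}
\|\phi\|_{L^1(0,T;W^{1,q}(Y_f))}\lesssim \|g\|_{L^1(0,T;L^q(Y_f))},\qquad
\|\partial_t\phi\|_{L^1(0,T;W^{1,q}(Y_f))}\lesssim \|\partial_t g\|_{L^1(0,T;L^q(Y_f))},
\end{equation*}
and $\|\phi(\cdot,0)\|_{L^q(Y_f)}\lesssim\|g(\cdot,0)\|_{L^q(Y_f)}$, provided $g(\cdot,0)$ makes sense. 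Now $g_{ij}=-W_{ij}+|Y_f|^{-1}A_{ij}$, and $\partial_t g_{ij}=-\partial_t W_{ij}+|Y_f|^{-1}A_{ij}'(t)$ with $A_{ij}'(t)=\int_{Y_f}\partial_t W_j\cdot e_i\,dy$, so $\|\partial_t g\|_{L^1(0,T;L^q(Y_f))}\lesssim\|\partial_t W_j\|_{L^1(0,T;L^q(Y_f))}+\|\partial_t W_j\|_{L^1(0,T;L^1(Y_f))}\lesssim\|\partial_t W_j\|_{L^1(0,T;L^q(Y_f))}$, which is $\lesssim 1$ by \eqref{pde3.33b} since $q\ge 2$ (so $L^q\hookrightarrow L^1$ on the bounded set $Y_f$). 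Similarly $\|g\|_{L^1(0,T;L^q(Y_f))}\lesssim\|W_j\|_{L^2(0,T;W^{1,q}(Y_f))}\cdot T^{1/2}+\|A\|_{L^1}\lesssim 1$ by \eqref{pde3.33a} and the boundedness/decay of $A$. For the initial value: one needs $W_j(\cdot,0)=e_j$ in $L^q(Y_f)$, which is an $L^\infty$ (hence $L^q$) function, and $A_{ij}(0)$ is finite; so $g(\cdot,0)\in L^q(Y_f)$ with norm $\lesssim 1$ — though here I would take a little care, since the $t^\alpha$ weights in \eqref{pde3.33a} signal a singularity of $\partial_t W_j$ as $t\to 0^+$, and I would want to confirm that $t\mapsto W_j(\cdot,t)$ is genuinely continuous into $L^q(Y_f)$ at $t=0$ with value $e_j$ (this follows from the integrated bound $\|\partial_t W_j\|_{L^1(0,T;L^q(Y_f))}\lesssim 1$, which makes $W_j\in W^{1,1}(0,T;L^q(Y_f))\hookrightarrow C([0,T];L^q(Y_f))$). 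Finally, $\partial_t\phi\in L^1(0,T;W^{1,q}(Y_f))$ for some $q>d$ (allowed since $q$ is arbitrary in $[2,\infty)$ and $d$ is fixed) yields $\partial_t\phi_{ki,j}(\cdot,t)\in C_{\mathrm{per}}(Y_f)$ by Morrey/Sobolev embedding $W^{1,q}\hookrightarrow C^{0,1-d/q}$, which combined with the $L^1$-in-time integrability gives $\partial_t\phi_{ki,j}\in L^1(0,T;C_{\mathrm{per}}(Y_f))$.

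The main obstacle I anticipate is not the Bogovskii bound itself — that is classical and $\varepsilon$-free on the fixed cell $Y_f$ — but rather making rigorous the interchange of $\partial_t$ with $\mathcal{B}$ and pinning down the behavior at $t=0$: one must justify that $t\mapsto g(\cdot,t)$ is weakly (or strongly) differentiable as a map into $L^q(Y_f)$ with $\partial_t g = -\partial_t W_j + |Y_f|^{-1}A'_{ij}$, and that $\mathcal{B}$ maps this derivative to $\partial_t(\mathcal{B}g)$; this is where the regularity transferred from Proposition \ref{prop2.1} (specifically the $L^1$-in-time control of $\partial_t W_j$, which already encodes the delicate near-$t=0$ estimate via the $t^\alpha$ weights and interpolation) does all the real work. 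Since the proposition is stated "without proofs" in the excerpt and is said to be "inspired by the flux corrector," I would in fact present it as a short corollary of Proposition \ref{prop2.1} together with the mapping properties of $\mathcal{B}$, emphasizing that the only genuinely new input beyond \eqref{pde3.33a}--\eqref{pde3.33b} is the (standard) observation that a fixed bounded linear spatial operator preserves all the Bochner-norm bounds and the Sobolev embedding upgrade.
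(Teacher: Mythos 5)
Your proposal is correct, and it is exactly the argument the paper leaves implicit: Proposition \ref{prop3.1} is stated "without proofs" precisely because it follows the same pattern as the flux corrector in Lemma \ref{flux} — treat $t$ as a parameter, check the mean-zero compatibility on the cell via the definition \eqref{pde2.2} of $A$, apply the ($t$-independent, bounded, linear) Bogovskii operator on $Y_f$, and transfer the time regularity of $W_j$ from \eqref{pde3.33a}--\eqref{pde3.33b}; your care about commuting $\partial_t$ with $\mathcal{B}$ and about the trace at $t=0$ (via $W_j\in W^{1,1}(0,T;L^q)\hookrightarrow C([0,T];L^q)$) addresses the only genuinely delicate points. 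The one detail worth fixing in the write-up is the boundary condition on the cell: the solution must vanish on $\partial Y_s$ and be $1$-periodic across $\partial Y$ (as in the auxiliary function of Corollary \ref{lemma3.4}), which is consistent with your construction since $\mathrm{dist}(\partial Y,\partial Y_s)>0$ lets the $W^{1,q}_0(Y_f)$ solution be extended periodically.
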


\begin{remark}
\emph{The corrector $(W_j,\pi_j)$ defined in
\eqref{pde2.1} is taken from A. Mikeli\'c \cite{Mikelic94}, which slightly differs from
that given by G. Allarie \cite{Allaire92}, i.e.,
\begin{equation}\label{pde:1.3}
\left\{
\begin{aligned}
\partial_t w_j-\Delta w_{j}+\nabla
\tilde{\pi}_{j}&=e_j&\qquad&\text{in}\quad \omega\times(0,T];\\
\nabla\cdot w_j &=0&\qquad&\text{in}\quad
\omega\times(0,T];\\
w_j&=0&\qquad&\text{on}\quad\partial\omega\times(0,T];\\
w_j|_{t=0}&=0&\qquad&\text{in}\quad\omega,
\quad T>0.
\end{aligned}
\right.
\end{equation}
We observe that there holds the equality:
$\partial_t w_j(\cdot,t)=W_j(\cdot,t)$ for any $t\geq 0$,
in the sense of Stokes semigroup representation, up to a projection\footnote{
Let $\mathcal{A}:=P(-\Delta)$ and $P$ is the Helmholtz projection
(see e.g. \cite{Tsai18}). By functional analytic approach,
we have $w_j(\cdot,t)=\int_{0}^{t}ds\exp\{-(t-s)\mathcal{A}\}e_j$,
and then using a spectral
representation, one can derive $\partial_t w_j
= \exp\{-t\mathcal{A}\}e_j$. On the other hand, by noting that $P(\nabla\kappa) = 0$, there holds
$W_i(\cdot,t)=\exp\{-t\mathcal{A}\}e_i =
\exp\{-t\mathcal{A}\}P(e_i-\nabla \kappa)$.
Thus, combining the previous two formulas, the equality $\partial_t w_j = W_j$ holds
up to the projection $P$.}.
However, this relationship does not alleviate the complexity arising
from the incompatibility among the given data in the equations \eqref{pde2.1}
or \eqref{pde:1.3}. By modifying the initial conditions,
G. Sandrakov introduced a revised version of the corrector
equations \cite[Theorem 2]{Sandrakov97}. 
However, this modification did not resolve this incompatibility between the initial and boundary data.}
\end{remark}

\subsection{\centering Semigroup estimate I}\label{subsec:3.1}

\begin{lemma}[semigroup estimate I]\label{lemma3.3}
Let $d=2$ or $3$, and $2< p<\infty$.
Suppose that $(W_j,\pi_j)$ is a weak solution of \eqref{pde2.1}
with $j=1,\cdots,d$. Then, for any $t>0$,
there holds a decay estimate\textsuperscript{\ref{footnote1}}
\begin{equation}\label{pri:3.6}
\bigg(\int_{Y_f}|\nabla W_j(\cdot, t)|^2\bigg)^{1/2}
\leq C_p t^{-\frac{p}{3p-2}},
\end{equation}
in which the constant $C_p$ depends on $d$, $p$, and the character of
$Y_f$.
\end{lemma}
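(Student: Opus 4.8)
The plan is to prove the decay estimate \eqref{pri:3.6} by exploiting the Stokes semigroup representation $W_j(\cdot,t) = e^{-t\mathcal A}(P e_j)$, where $\mathcal A = P(-\Delta)$ is the Stokes operator on $Y_f$ (with periodic boundary conditions on $\partial Y$ and Dirichlet conditions on $\partial Y_s$) and $P$ is the associated Helmholtz projection, together with interior regularity estimates afforded by the periodic geometry. The starting point is the $L^2$ energy estimate \eqref{pde2.14}, which gives $\|W_j(\cdot,t)\|_{L^2(Y_f)} \lesssim 1$ and, by the analyticity/smoothing of the Stokes semigroup, the standard bound $\|\nabla W_j(\cdot,t)\|_{L^2(Y_f)} \lesssim t^{-1/2}$. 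This crude $t^{-1/2}$ decay is not good enough: the exponent $\tfrac{p}{3p-2}$ is strictly larger than $\tfrac12$ for $p>2$ (and tends to $\tfrac13$ only as $p\to\infty$), so I need an improvement, and the improvement must come from the extra interior (i.e.\ higher $L^p$) regularity that is unavailable near $t=0$ in $L^2$ alone.

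The key steps, in order: First, I would establish an interior Calder\'on--Zygmund-type estimate for the Stokes system on $\omega\times(0,\infty)$, i.e.\ a bound of the form $\|\nabla W_j(\cdot,t)\|_{L^p(Y_f)} \lesssim t^{-1/2}\|W_j(\cdot,t/2)\|_{L^p(Y_f)} + (\text{lower order})$, using the periodicity to pass from a local estimate on a single cell to the whole perforated space; here one uses that the pressure only enters through its gradient and that $W_j$ is divergence-free, so the usual parabolic regularity machinery for the nonstationary Stokes system (as in the references for $L^p$ maximal regularity of the Stokes semigroup) applies. Second, I would run a bootstrapping/iteration argument: starting from the $L^2$ bound $\|W_j(\cdot,t)\|_{L^2} \lesssim 1$, use the $L^2 \to L^p$ smoothing of the Stokes semigroup ($\|e^{-s\mathcal A}g\|_{L^p} \lesssim s^{-\frac d2(\frac12 - \frac1p)}\|g\|_{L^2}$) to get $\|W_j(\cdot,t)\|_{L^p} \lesssim t^{-\frac d2(\frac12-\frac1p)}$, and then combine this with the gradient estimate of the first step. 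Third, the exponent $\tfrac{p}{3p-2}$ should emerge from optimizing the split $W_j(\cdot,t) = e^{-(t/2)\mathcal A}W_j(\cdot,t/2)$: one balances the semigroup decay rate against a Gagliardo--Nirenberg interpolation inequality on $Y_f$ interpolating $\|\nabla W_j\|_{L^2}$ between $\|W_j\|_{L^2}$ and a higher-regularity norm controlled in $L^p$, and the precise arithmetic of the Sobolev exponents in dimension-free form produces $\tfrac{p}{3p-2}$. I would double-check this exponent by the limiting cases: $p\to 2^+$ should (nearly) recover the trivial $t^{-1/2}$, which it does since $\tfrac{p}{3p-2}\to\tfrac12$, and $p\to\infty$ gives $t^{-1/3}$, consistent with the threshold $\alpha>1/3$ appearing in Proposition~\ref{prop2.1}.

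The main obstacle I anticipate is the first step: obtaining the interior $L^p$ gradient regularity for the nonstationary Stokes system \emph{without any compatibility condition} between the initial datum $e_j$ and the Dirichlet boundary condition $W_j=0$ on $\partial\omega$. The incompatibility is precisely what the paper flags as difficulty (D2), and it means one cannot simply differentiate the equation or reflect; instead I would localize away from $t=0$ (the estimate \eqref{pri:3.6} is only claimed for $t>0$), apply interior parabolic Stokes estimates on $\omega$ using the periodic structure to avoid boundary effects at $\partial Y$, and carefully track how the pressure $\pi_j$ enters — since by the Helmholtz projection $\pi_j$ is determined nonlocally by $W_j$, its $L^p$ control must be derived from the velocity estimate rather than assumed. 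A secondary technical point is making the semigroup smoothing estimates $L^2\to L^p$ uniform in the geometry of $Y_f$; this follows from the spectral gap of $\mathcal A$ (the exponential decay asserted for $A$ in Theorem~\ref{thm3}) but should be invoked carefully. Once the interior $L^p$ gradient estimate is in hand, the interpolation/optimization producing the exponent $\tfrac{p}{3p-2}$ is routine.
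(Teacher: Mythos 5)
Your proposal does not follow the paper's route, and I do not think the route you describe can be made to work. A first warning sign is the arithmetic: you assert that $\frac{p}{3p-2}>\frac12$ for $p>2$, but in fact $\frac{p}{3p-2}<\frac12$ there (it decreases from $\frac12$ at $p=2$ to $\frac13$ as $p\to\infty$). The content of \eqref{pri:3.6} is therefore not a better large-time decay but a \emph{less singular} bound as $t\to 0^+$ than the generic semigroup estimate $\|\nabla W_j(\cdot,t)\|_{L^2(Y_f)}\lesssim t^{-1/2}$, which is sharp for data that are merely in $L^2$ and fail the Dirichlet condition on $\partial\omega$ (precisely the incompatibility here). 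Your mechanism --- splitting $W_j(t)=e^{-(t/2)\mathcal{A}}W_j(t/2)$, applying $L^2\to L^p$ smoothing, then interpolating --- only multiplies singular powers of $t$ together and contains no step that could produce an exponent strictly below $\frac12$. Moreover, your claim that ``the Sobolev exponents in dimension-free form produce $\frac{p}{3p-2}$'' is not tenable: the Gagliardo--Nirenberg exponent interpolating $\|\nabla W_j\|_{L^2}$ between $\|W_j\|_{L^2}$ and $\|W_j\|_{W^{2,p}}$ is dimension-dependent and coincides with $\frac{p}{3p-2}$ only when $d=2$, whereas the lemma holds for all $d\geq 2$.

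The missing idea is a \emph{spatial}, not temporal, decomposition. The paper splits $Y_f$ into a boundary layer $Y_f\setminus(Y_f)_\rho$ of width $\rho$ and the interior $(Y_f)_\rho$, with $\rho$ a free parameter. On the layer, H\"older in space gives $\|\nabla W_j(t)\|_{L^2(Y_f\setminus(Y_f)_\rho)}\lesssim \rho^{\frac12-\frac1p}\,\|\nabla W_j(t)\|_{L^p(Y_f)}\lesssim \rho^{\frac12-\frac1p}\,t^{-1/2}$ (see \eqref{pde2.5}): the measure of the layer scales like $\rho$ in every dimension, which is why the final exponent is dimension-free, and this small measure is what compensates the $t^{-1/2}$ singularity. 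On the interior, one tests the equation with $\nabla\times(\chi_i^2\nabla\times W_j)$ --- the curl structure eliminates the pressure --- and obtains the Caccioppoli-type inequalities \eqref{f:3.10a}--\eqref{f:3.10b}, which together with the energy estimate \eqref{pde2.14} give the \emph{time-uniform} bound $\|\nabla W_j(t)\|_{L^2((Y_f)_\rho)}\lesssim\rho^{-1}$ (see \eqref{pde2.4}). Optimizing $\rho^{\frac12-\frac1p}t^{-1/2}+\rho^{-1}$ by taking $\rho=t^{p/(3p-2)}$ yields \eqref{pri:3.6}. Your intuition that the gain must come from interior regularity unavailable near $\partial\omega$ is correct, but nothing in your outline plays the role of the measure-of-the-layer gain or of the pressure-free interior Caccioppoli estimate, so the proposal has a genuine gap.
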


\emph{The key observation} on the estimate $\eqref{pri:3.6}$ is that Caccioppoli-type inequality offers a good time-decay
in the interior region, but produces a bad spacial scale-factor. Meanwhile,
the semigroup estimate can dominate the region near boundary $\partial Y_s$, owning a relatively bad time-decay, but creating a good spacial scale-factor. Thus,
\emph{the idea} is to bring in a parameter $\rho$ to balance their advantage and disadvantage such that we can ``improve'' the decay power of Stokes semigroup estimates.

\begin{proof}
It suffices to show the estimate $\eqref{pri:3.6}$ for any $t\in(0,t_0)$,
where $t_0\in (0,1/2)$ is usually a small number,
while for the case $t>t_0$ it can be simply inferred from the semigroup estimate itself. Also, we merely establish the concrete proof in the case of $d=3$. For $d=2$, one can simply set
\begin{equation*}
\tilde{W}_j(y_1,y_2,y_3,t) = (W_j(y_1,y_2),0,t);\quad
\tilde{\pi}(y_1,y_2,y_3,t) =\pi(y_1,y_2,t);\quad
\tilde{W}_j(y_1,y_2,y_3,0) = e_j,
\quad j=1,2.
\end{equation*}
It is not hard to verify that $(\tilde{W}_j,\tilde{\pi})$ satisfies
the corrector equation $\eqref{pde2.1}$ in three dimensional case.
Therefore, once three dimensional case is proved, two
dimensional problem can be directly approached in this way.

\begin{figure}
  \centering
  \includegraphics[width=0.8\textwidth]{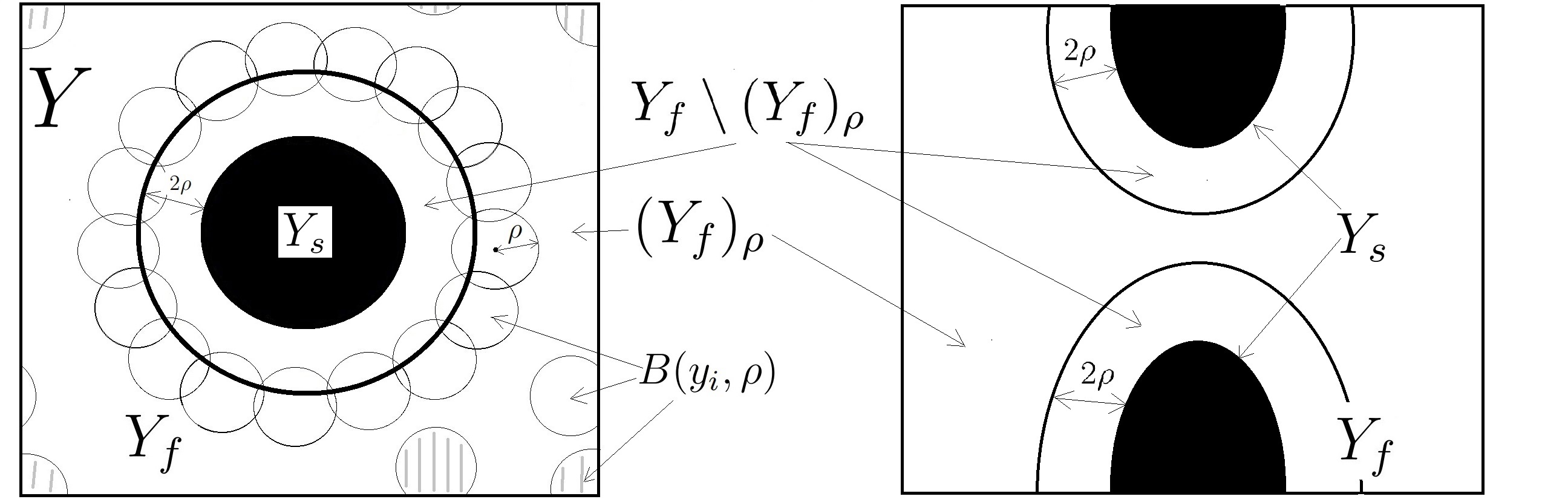}
  \caption{\small This figure provides a heuristic illustration of the decomposition of region $Y_f$ in two dimensional case. Furthermore, it is important to note that region $Y$ is a periodic domain.}\label{subfig_2}
\end{figure}

Firstly, we decompose the integral domain $Y_f$ into two parts (see Figure $\ref{subfig_2}$): $(Y_f)_{\rho}$ and $Y_f\setminus(Y_f)_{\rho}$, where $(Y_f)_{\rho}:=\{y\in Y_f:\text{dist}(y,\partial Y_s)\geq 2\rho\}$ for the parameter $\rho>0$, which will be fixed later. In this respect, the desired estimate $\eqref{pri:3.6}$ is reduced to showing: (For the ease of the statement, we omit the subscript of $W_j$ in the proof.)
\begin{equation}\label{f:3.9}
\bigg(\int_{Y_f}|\nabla W(\cdot,t)|^2\bigg)^{1/2}
\lesssim
\bigg(\int_{Y_f\setminus(Y_f)_{\rho}}|\nabla W(\cdot,t)|^2\bigg)^{1/2}
+\bigg(\int_{(Y_f)_{\rho}}|\nabla W(\cdot,t)|^2\bigg)^{1/2}=:I_1+I_2.
\end{equation}
The easier term is $I_1$, and by using
H\"older's inequality and the semigroup estimates (see e.g.
\cite[pp.82]{Tsai18}) in the order, we obtain
\begin{equation}\label{pde2.5}
I_1\lesssim \rho^{\frac{1}{2}-\frac{1}{p}}
\bigg(\int_{Y_f}|\nabla W(\cdot,t)|^p \bigg)^{1/p}
\lesssim \rho^{\frac{1}{2}-\frac{1}{p}}
t^{-\frac{1}{2}}.
\end{equation}

We now turn to the second term $I_2$, and start from giving
a family of cut-off functions, denoted by $\{\chi_i\}$, which
satisfy that
$\chi_{i}(y):=\chi_0(y+y_i)$, and $(Y_f)_{\rho}\subset\bigcup_{i}B(y_i,\rho)\subset
(Y_f)_{\frac{1}{2}\rho}$.
We assume that
\begin{itemize}
  \item $\chi_0\in C_{per}^1(Y)$ is a cut-off function;
  \item $\chi_0=1$ on $B(0,\rho/2)$ and supp$(\chi_0)\subset B(0,\rho)$  with $|\nabla\chi_0|\lesssim \frac{1}{\rho};$
  \item For any point in the region $(Y_f)_\rho$, it is covered by, at most, five elements of the covering family $\{B(y_i,\rho)\}$.
\end{itemize}
From the assumptions on $\{\chi_i\}$, it follows that  $\text{dist}\big(\partial Y_s,\text{supp}(\chi_i)\big)\geq \rho$.
Moreover, we define a family of indicator functions associated with
$\{\chi_i\}$ as follows:
\begin{equation*}
\tilde{\chi_i}=1 ~\text{in}~ B(y_i,\rho) \quad \text{and}\quad \tilde{\chi_i}=0 ~\text{outside}~ B(y_i,\rho).
\end{equation*}
Therefore, the second term in the right-hand side of $\eqref{f:3.9}$
can be estimated by
\begin{equation*}
I_2\leq \bigg(\sum_{i}\int_{Y_f}\chi_i^2|\nabla W(\cdot,t)|^2\bigg)^{1/2},
\end{equation*}
while we claim that: for each $i$ and any $t>0$, there hold
\begin{subequations}
\begin{align}
&\int_{Y_f}\chi_i^2|\nabla W(\cdot,t)|^2 \lesssim \int_{Y_f}\chi_i^2|\nabla\times W(\cdot,t)|^2+\frac{1}{\rho^2}\int_{Y_f}
  \tilde{\chi_i}|W(\cdot,t)|^2; \label{f:3.10a}\\
&\int_{Y_f}\chi_i^2|\nabla\times W(\cdot,t)|^2 \lesssim
\frac{1}{\rho^2}\int_{0}^{T}\int_{Y_f}
\tilde{\chi_i}|\nabla W|^2.
\label{f:3.10b}
\end{align}
\end{subequations}

Admitting the claims $\eqref{f:3.10a}$, $\eqref{f:3.10b}$ for a moment, and combining them,  we will get
\begin{equation}\label{pde2.3}
\int_{Y_f}\chi_i^2|\nabla W(\cdot,t)|^2dy\lesssim
\frac{1}{\rho^2}\bigg\{\int_{0}^{T}\int_{Y_f}
\tilde{\chi_i}|\nabla W|^2+
\sup_{0\leq t\leq T}\int_{Y_f}
  \tilde{\chi_i}|W(\cdot,t)|^2\bigg\}.
\end{equation}
In view of the assumptions on $\chi_i$ and the above estimate \eqref{pde2.3}, we obtain that
\begin{equation}\label{pde2.4}
 \begin{aligned}
 I_2&\leq\bigg(\sum_i\int_{Y_f}\chi_i^2|\nabla W(\cdot,t)|^2\bigg)^{1/2}\\
 &\lesssim \frac{1}{\rho}\bigg(
 \int_{0}^{T}\int_{Y_f}|\nabla W|^2+\sup_{0\leq t\leq T}\int_{Y_f}
 |W(\cdot,t)|^2 \bigg)^{1/2}\lesssim^{\eqref{pde2.14}} \frac{1}{\rho}.
 \end{aligned}
\end{equation}
As a result, plugging \eqref{pde2.5} and $\eqref{pde2.4}$ back into $\eqref{f:3.9}$, and then taking $\rho=t^{\frac{p}{3p-2}}$ (which
requires $t$ to be small), one can acquire
\begin{equation*}
\begin{aligned}
\bigg(\int_{Y_f}|\nabla W(\cdot,t)|^2\bigg)^{1/2}
\lesssim \rho^{\frac{1}{2}-\frac{1}{p}}t^{-\frac{1}{2}}
+\rho^{-1}
\lesssim t^{-\frac{p}{3p-2}},
\end{aligned}
\end{equation*}
which is the desired estimate $\eqref{pri:3.6}$.

We plan to use two steps to complete the whole proof, which is originally inspired by \cite{Jin13}.

\textbf{Step 1.}
We now verify the claims $\eqref{f:3.10a}$, $\eqref{f:3.10b}$,
and we start from dealing with the estimate $\eqref{f:3.10b}$ therein.
Let $\nabla\times(\chi^2_i\nabla\times W)$ be the test function and act on both sides of \eqref{pde2.1}, and by noting the facts:  $\nabla\cdot(\nabla\times (\chi_i^2\nabla\times W))=0$, $\partial Y_s\bigcap\text{supp}\{\chi_i\}=\emptyset$ and the periodicity of $\chi_i$,
integration by parts gives us
\begin{equation}\label{pde2.6}
\begin{aligned}
0&=\int_{Y_f}(\partial_t W-\Delta W+\nabla\pi)
\cdot\nabla\times(\chi_i^2\nabla\times W)\\
&=\int_{Y_f}\partial_t W
\cdot\nabla\times(\chi_i^2\nabla\times W)-
\int_{Y_f}\Delta W\cdot\nabla\times(\chi_i^2\nabla\times W).
\end{aligned}
\end{equation}
By the basic formula:
\begin{equation}\label{k-8}
 \nabla\cdot\big(\vec{a}\times \vec{b}\big) =\vec{b}\cdot(\nabla\times \vec{a})
 -\vec{a}\cdot(\nabla\times \vec{b}),
\end{equation}
we can further derive that
\begin{equation*}
\begin{aligned}
\int_{Y_f}\partial_t W
\cdot\nabla\times(\chi_i^2\nabla\times W)
&\overset{\eqref{k-8}}{=}-\int_{Y_f}\nabla\cdot[\partial_t W\times(\chi_i^2\nabla\times W)]
+\int_{Y_f}(\nabla\times\partial_t W)\cdot
(\chi_i^2\nabla\times W) \\
&=-\int_{\partial Y_f}\vec{n}\cdot[\partial_t W\times(\chi_i^2\nabla\times W)]
+\int_{Y_f}(\nabla\times\partial_t W)\cdot
(\chi_i^2\nabla\times W) \\
&=\frac{1}{2}\frac{d}{dt}\int_{Y_f}
\chi_i^2|\nabla\times W|^2,
\end{aligned}
\end{equation*}
where we merely take 
$\vec{a}=\partial_t W$ and $\vec{b}= \chi_i^2\nabla\times W$. This together with \eqref{pde2.6} leads to
\begin{equation}\label{k-7}
\begin{aligned}
0&=\frac{1}{2}\frac{d}{dt}\int_{Y_f}
\chi_i^2|\nabla\times W|^2
-\int_{Y_f}\Delta W\cdot\nabla\times(\chi_i^2\nabla\times W)\\
&=\frac{1}{2}\frac{d}{dt}\int_{Y_f}
\chi_i^2|\nabla\times W|^2+\int_{Y_f}\chi_i^2|\Delta W|^2-
\int_{Y_f}\Delta W\cdot\nabla\chi_i^2\times(\nabla\times W),
\end{aligned}
\end{equation}
where the second equality is due to the following computation:
\begin{equation*}
\begin{aligned}
&\nabla\times (\chi_i^2\nabla\times W)=\nabla\chi^2_i\times(\nabla\times W)+\chi_i^2\nabla\times\nabla\times W\\
&\nabla\times\nabla\times W=-\Delta W+\nabla(\nabla\cdot W)=-\Delta W.
\end{aligned}
\end{equation*}

Rewriting the above equality $\eqref{k-7}$, there holds
\begin{equation}\label{pde2.8}
\begin{aligned}
\frac{1}{2}\frac{d}{dt}\int_{Y_f}
\chi_i^2|\nabla\times W|^2+\int_{Y_f}\chi_i^2|\Delta W|^2
&=\int_{Y_f}\Delta W\cdot\nabla\chi_i^2\times(\nabla\times W)\\
&\leq 2\int_{Y_f}|\chi_i||\Delta W||\nabla \chi_i||\nabla\times W|.
\end{aligned}
\end{equation}
(Note that the repeated index $i$ does not represent a sum throughout the proofs of Lemmas $\ref{lemma3.3}$, $\ref{lemma2.2}$.)
Applying Young's inequality to the right-hand side of $\eqref{pde2.8}$, it is not hard to see that
\begin{equation*}
\frac{1}{2}\frac{d}{dt}\int_{Y_f}
\chi_i^2|\nabla\times W|^2+\int_{Y_f}\chi_i^2|\Delta W|^2
\lesssim
\int_{Y_f}|\nabla\chi_i|^2|\nabla\times W|^2,
\end{equation*}
and for any $t>0$ we consequently derive that
\begin{equation*}
\begin{aligned}
\int_{Y_f}\chi_i^2|\nabla\times W(\cdot,t)|^2
\lesssim\int_{0}^{T}\int_{Y_f}|\nabla\chi_i|^2|\nabla W|^2+\int_{Y_f}\chi_i^2|\nabla\times W(\cdot,0)|^2
\lesssim\frac{1}{\rho^2}\int_{0}^{T}\int_{Y_f}
\tilde{\chi_i}|\nabla W|^2,
\end{aligned}
\end{equation*}
which gives the claim $\eqref{f:3.10b}$.

\textbf{Step 2.} We turn to study the claim $\eqref{f:3.10a}$.
Recalling the formula $\nabla\times(\chi_i^2\nabla\times W)=\nabla\chi^2_i\times(\nabla\times W)-\chi_i^2\Delta W$ in $Y_f$, it leads us to the following integral equality
\begin{equation}\label{pde2.9}
-\int_{Y_f}\chi_i^2\Delta W\cdot W =\int_{Y_f}[\nabla\times(\chi_i^2\nabla\times W)]\cdot W -2\int_{Y_f}\chi_i(\nabla\chi_i\times\nabla\times W)\cdot W.
\end{equation}
Integrating by parts, the left-hand side of \eqref{pde2.9} is equal to
\begin{equation*}
\int_{Y_f}\chi_i^2|\nabla W|^2+2\int_{Y_f}\chi_i\nabla W:\nabla\chi_i\otimes W,
\end{equation*}
and moving the second term above to the right-hand side of \eqref{pde2.9}
we then derive that
\begin{equation}\label{pde2.10}
\int_{Y_f}\chi_i^2|\nabla W|^2 \lesssim
\int_{Y_f}|\chi_i||\nabla W||\nabla\chi_i||W|+
\Big|\int_{Y_f}\nabla\times(\chi_i^2\nabla\times W)\cdot W\Big|.
\end{equation}
Continue the computation as follows:
\begin{equation*}
\begin{aligned}
\int_{Y_f} \nabla\times(\chi_i^2\nabla\times W)
&\cdot W
\overset{\eqref{k-8}}{=}\int_{Y_f}\nabla\cdot[(\chi_i^2\nabla\times W)\times W]
+\int_{Y_f}
(\chi_i^2\nabla\times W)\cdot (\nabla\times W)\\
&=\int_{\partial Y_f}\vec{n}\cdot[(\chi_i^2\nabla\times W)\times W]
+\int_{Y_f}
(\chi_i^2\nabla\times W)\cdot (\nabla\times W)
=\int_{Y_f}
\chi_i^2|\nabla\times W|^2.
\end{aligned}
\end{equation*}
Inserting the above equality back into \eqref{pde2.10} and using
Young's inequality again, there holds
\begin{equation*}
\int_{Y_f}\chi_i^2|\nabla W|^2\leq
\theta\int_{Y_f}\chi_i^2|\nabla W|^2 +C_{\theta}\int_{Y_f}|\nabla\chi_i|^2|W|^2
+\int_{Y_f}\chi_i^2|\nabla\times W|^2,
\qquad \theta\in(0,1),
\end{equation*}
which immediately implies the stated result $\eqref{f:3.10a}$.
We have completed all the proof.
\end{proof}

\subsection{\centering Semigroup estimate II}\label{subsec:3.2}

\noindent
In fact, we repeat the same philosophy used in Lemma $\ref{lemma3.3}$
to show the estimate $\eqref{pri:3.7}$ in $L^2$-norm, and then appeal
to an interpolation, where we adopt the stream function method to get a higher-order interior estimate.

\begin{lemma}[semigroup estimate II]\label{lemma2.2}
Let $d\geq 2$ and $p\geq 2$ be sufficiently large, and $\gamma:=\frac{p(19p-14)}{(3p-2)(7p-2)}$.
Suppose that $(W_j,\pi_j)$ is a weak solution of \eqref{pde2.1} with $j=1,\cdots,d$.
Then, for any $r\in(1,\infty)$,
there exists a constant $q\in(1,\infty)$, such that
$\lambda:=\frac{2(q-r)}{r(q-2)}$ satisfying $0\leq\lambda\leq 1$,
and there holds the decay estimate\textsuperscript{\ref{footnote1}}
\begin{equation}\label{pri:3.7}
\|\partial_t W_j(\cdot,t)\|_{L^r(Y_f)}\lesssim t^{-1+\lambda(1-\gamma)}
\end{equation}
for any $t>0$, where the multiplicative constant depends on $d,p,q$, and the character of $Y_f$.
\end{lemma}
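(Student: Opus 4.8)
\emph{Proof plan.}
The whole statement will be reduced to its $r=2$ instance, i.e. to the single endpoint bound
\begin{equation*}
\|\partial_t W_j(\cdot,t)\|_{L^2(Y_f)}\ \lesssim\ t^{-\gamma},\qquad 0<t\le t_0,
\end{equation*}
where $t_0\in(0,1/2)$ is a small fixed number (for $t>t_0$ every assertion is immediate from the exponential decay of the Stokes semigroup on $Y_f$, exactly as in the last paragraph before the proof of Lemma \ref{lemma3.3}). Granting this, fix $r\in(1,\infty)$ with $r\neq2$ and choose $q\in(1,\infty)$ on the same side of $2$ as $r$ (and with $q>r$ when $r>2$), so that $\tfrac1r=\tfrac{\lambda}{2}+\tfrac{1-\lambda}{q}$ holds with $\lambda=\tfrac{2(q-r)}{r(q-2)}\in(0,1)$. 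Combining the elementary analyticity bound $\|\partial_t W_j(\cdot,t)\|_{L^q(Y_f)}=\|\mathcal{A}e^{-t\mathcal{A}}Pe_j\|_{L^q}\lesssim t^{-1}$ with the $L^2$ bound above and H\"older's inequality in the Lebesgue exponent gives
\begin{equation*}
\|\partial_t W_j(\cdot,t)\|_{L^r(Y_f)}\le\|\partial_t W_j(\cdot,t)\|_{L^2(Y_f)}^{\lambda}\,\|\partial_t W_j(\cdot,t)\|_{L^q(Y_f)}^{1-\lambda}\lesssim t^{-\gamma\lambda}\,t^{-(1-\lambda)}=t^{-1+\lambda(1-\gamma)},
\end{equation*}
which is \eqref{pri:3.7}; the case $r=2$ corresponds to $\lambda=1$ and is the $L^2$ bound itself.

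It remains to prove the $L^2$ bound, and here I would run the argument of Lemma \ref{lemma3.3}. For a parameter $\rho\in(0,\rho_0)$ to be optimized, split
\begin{equation*}
\|\partial_t W_j(\cdot,t)\|_{L^2(Y_f)}\le\|\partial_t W_j(\cdot,t)\|_{L^2(Y_f\setminus(Y_f)_\rho)}+\|\partial_t W_j(\cdot,t)\|_{L^2((Y_f)_\rho)}=:I_1+I_2 .
\end{equation*}
For the layer term $I_1$, H\"older's inequality together with $|Y_f\setminus(Y_f)_\rho|\sim\rho$ and the $L^p$--analyticity of the Stokes semigroup give $I_1\lesssim\rho^{\frac12-\frac1p}\|\partial_t W_j(\cdot,t)\|_{L^p(Y_f)}\lesssim\rho^{\frac12-\frac1p}t^{-1}$: good in $\rho$, bad in $t$, precisely as for $I_1$ in Lemma \ref{lemma3.3}. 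For the interior term $I_2$ I would use the \emph{stream-function method}: the vorticity $\Omega_j:=\nabla\times W_j$ solves the pure heat equation $\partial_t\Omega_j-\Delta\Omega_j=0$ in $\omega\times(0,\infty)$ (the pressure gradient is annihilated by the curl), with $\Omega_j|_{t=0}=\nabla\times e_j=0$. Interior parabolic regularity for this heat equation, localized away from $\partial Y_s$ at scale $\rho$ by the cut-offs $\{\chi_i\}$ of Lemma \ref{lemma3.3}, controls $\partial_t\Omega_j(\cdot,t)$, and—after recovering $\partial_t W_j$ from $\partial_t\Omega_j$ via the div–curl system $\nabla\cdot\partial_t W_j=0$, $\nabla\times\partial_t W_j=\partial_t\Omega_j$ (modulo a finite-dimensional space of smooth harmonic fields when $d\ge3$)—it bounds $\partial_t W_j(\cdot,t)$ in $L^2((Y_f)_\rho)$ by a negative power of $\rho$ times $\sup_{s\in(t/2,t)}\|\Omega_j(\cdot,s)\|_{L^2(Y_f)}$. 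Tracking the parabolic scaling one is led to $I_2\lesssim\rho^{-3}\sup_{s\in(t/2,t)}\|\nabla W_j(\cdot,s)\|_{L^2(Y_f)}\lesssim\rho^{-3}t^{-\frac{p}{3p-2}}$, the last step being the decay estimate \eqref{pri:3.6} of Lemma \ref{lemma3.3}: bad in $\rho$, good in $t$. Balancing $I_1\sim I_2$, i.e. $\rho^{\frac72-\frac1p}=t^{\,1-\frac{p}{3p-2}}$, forces $\rho=t^{\,4p(p-1)/((3p-2)(7p-2))}$ (a positive power of $t$, hence admissible for small $t$), and then $I_1+I_2\sim\rho^{-3}t^{-\frac{p}{3p-2}}=t^{-\gamma}$ with exactly $\gamma=\frac{p(19p-14)}{(3p-2)(7p-2)}$, since $\frac{12p(p-1)}{(3p-2)(7p-2)}+\frac{p}{3p-2}=\gamma$.

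The decisive — and most delicate — ingredient is the interior estimate for $I_2$, and I expect it to be the main obstacle. Three features must be handled with care. First, for $d\ge3$ there is no global stream function on the periodic perforated domain, so one works with the vorticity/vector potential locally, or modulo the (smooth, hence harmless) space of harmonic fields fixed by the topology of $Y_f$. Secondly, the cut-offs near $\partial Y_s$ generate commutator and lower-order contributions that must be reabsorbed through an iteration; it is this iteration, rather than a single elliptic step, that produces the power $\rho^{-3}$ instead of a naive $\rho^{-1}$ or $\rho^{-2}$. Thirdly, since the optimal $\rho$ satisfies $\rho^2\gg t$ as $t\to0$, the heat-equation interior estimate is used in the short-cylinder regime: one must exploit the smallness of $\Omega_j$ deep in the interior together with the \emph{pointwise-in-time} bound \eqref{pri:3.6}, because a space–time integral of $|\nabla W_j|^2$ (let alone of $|\partial_t W_j|^2$) over $(0,T)$ is \emph{not} available here—indeed $\|\partial_t W_j(\cdot,t)\|_{L^2}\sim t^{-\gamma}$ with $\gamma$ close to $1$. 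Once this interior estimate is established, the remaining steps—the splitting, H\"older's inequality, the optimization in $\rho$, and the final interpolation—are routine.
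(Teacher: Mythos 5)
Your overall architecture coincides with the paper's: reduce to an $L^2$-in-space decay bound with exponent $\gamma$, obtain it by splitting $Y_f$ into a $\rho$-interior and a $\rho$-layer, estimate the layer by H\"older plus the $L^p$ analyticity bound $\|\mathcal{A}W_j(\cdot,t)\|_{L^p}\lesssim t^{-1}$, estimate the interior by Caccioppoli/vorticity arguments at cost $\rho^{-3}t^{-p/(3p-2)}$, balance $\rho$ against $t$, and interpolate to reach general $r$; your numerology (the choice of $\rho$, the identity producing $\gamma$, and the exponent $\lambda$) is exactly the paper's. The one genuine flaw is in \emph{what} you split by region. You decompose $\|\partial_t W_j(\cdot,t)\|_{L^2(Y_f)}$ and then try to bound $\partial_t W_j$ on $(Y_f)_\rho$ by local information, recovering it from $\partial_t(\nabla\times W_j)$ through the div--curl system ``modulo a finite-dimensional space of harmonic fields.'' That recovery is not local: on a subdomain with no boundary data the kernel of the div--curl system contains all gradients of harmonic functions, and equivalently $\partial_t W_j=\Delta W_j-\nabla\pi_j$ requires local control of the pressure gradient, which the vorticity equation does not supply. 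The paper avoids this entirely by estimating the \emph{local} quantity $\Delta W_j$ region by region (this is the key estimate \eqref{f:3.11}), and applying the Helmholtz projection only once, globally, at the end: $\|\partial_t W_j\|_{L^2}=\|P(-\Delta)W_j\|_{L^2}\le\|\Delta W_j\|_{L^2}$. With that reordering your layer estimate goes through after inserting the stationary Stokes $L^p$ bound $\|\nabla^2 W_j(\cdot,t)\|_{L^p}\lesssim\|\partial_t W_j(\cdot,t)\|_{L^p}$ (see \eqref{f:3.12} and \eqref{pde2.26}), and your interior target becomes $\|\nabla^2W_j(\cdot,t)\|_{L^2((Y_f)_\rho)}\lesssim\rho^{-3}t^{-p/(3p-2)}$, which is \eqref{pde2.25}.

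A second, smaller point: your closing remark that ``a space--time integral of $|\nabla W_j|^2$ over $(0,T)$ is not available'' is wrong, and in fact that integral is indispensable. The energy estimate \eqref{pde2.14} gives $\int_0^T\int_{Y_f}|\nabla W_j|^2\lesssim1$, and the paper's interior estimate is closed precisely by combining this space--time bound (fed through the vorticity heat-equation estimate \eqref{pde2.23}, which costs $\rho^{-4}$) with the pointwise-in-time bound \eqref{pri:3.6} (fed through the Caccioppoli inequality \eqref{pde2.31}, which costs $\rho^{-2}$); together these produce $\rho^{-6}t^{-2p/(3p-2)}$, i.e.\ the $\rho^{-3}t^{-p/(3p-2)}$ you want. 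Your alternative, $\rho^{-3}\sup_{s\in(t/2,t)}\|\nabla W_j(\cdot,s)\|_{L^2}$, has the right order but you give no mechanism for the iteration that yields the factor $\rho^{-3}$, and you have explicitly left this ``decisive ingredient'' unproved; as written the interior estimate, which is the entire content of the lemma beyond routine interpolation, is missing.
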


\begin{proof}
In general, we adopt a similar strategy and notation as presented in Lemma \ref{lemma3.3}, with the additional use of an interpolation argument. The entire proof is structured into four steps.

\textbf{Step 1.} \emph{One step reduction.}
The main idea is that we firstly improve the decay estimate in
$L^2$-spatial norm, and then appeal to the interpolation argument
to get a weaker improvement in terms of $L^r$-spatial norms with $r\not= 2$. Thus,
the key step is to establish the following estimate: (Here we omit the subscript of $W_j$ throughout the proof.)
\begin{equation}\label{f:3.11}
\bigg(\int_{Y_f}dy|\Delta W(y,t)|^2\bigg)^{1/2}\lesssim
t^{-\gamma}
\end{equation}
for any $t>0$. Admitting the above estimate for a while, we introduce
the infinitesimal generator $\mathcal{A}:=P(-\Delta)$, where the operator $P$ is known as the Helmholtz projection, and then it follows from the boundedness of $P$ and $\eqref{f:3.11}$ that
\begin{equation*}
\|\mathcal{A}W(\cdot,t)\|_{L^2(Y_f)}=
\|P(-\Delta)W(\cdot,t)\|_{L^2(Y_f)}\leq
\|\Delta W(\cdot,t)\|_{L^2(Y_f)}\lesssim
t^{-\gamma}.
\end{equation*}
Recalling Stokes semigroup estimates (see e.g. \cite[pp.81]{Tsai18}):
for $1<q<\infty$, we have
\begin{equation*}
\|\mathcal{A} W(\cdot,t)\|_{L^q(Y_f)}\lesssim t^{-1}.
\end{equation*}
Then, by preferring a suitable $q\in(1,\infty)$ such that
$q<r\leq 2$ or $2\leq r<q$, one can employ the interpolation inequality to get that
\begin{equation*}
\begin{aligned}
\|\mathcal{A}W(\cdot,t)\|_{L^r(Y_f)}&\leq
\|\mathcal{A}W(\cdot,t)\|^{\lambda}_{L^2(Y_f)}
\|\mathcal{A}W(\cdot,t)\|^{1-\lambda}_{L^q(Y_f)}\\
&\lesssim t^{-1+\lambda(1-\gamma)},
\end{aligned}
\end{equation*}
where $\lambda:=\frac{2(q-r)}{r(q-2)}$. Using the
representation of the solution of \eqref{pde2.1} by the semigroup theory, we have $\partial_t W = \mathcal{A} W$ in
$Y_f\times (0,\infty)$, which implies
the stated estimate $\eqref{pri:3.7}$.

\textbf{Step 2.} \emph{A further reduction}.
The remainder part of the proof is devoted to establishing
the crucial estimate $\eqref{f:3.11}$ from two important ingredients.  Introducing the parameter $\rho>0$, we decompose the following integral into two parts: the inner part and the near-boundary part, i.e.,
\begin{equation}\label{pde2.24}
\bigg(
\int_{Y_f}|\Delta W(\cdot,t)|^2
\bigg)^{1/2}\leq
\bigg(
\int_{(Y_f)_{\rho}}|\Delta W(\cdot,t)|^2
\bigg)^{1/2}+
\bigg(
\int_{Y_f\setminus(Y_f)_{\rho}}|\Delta W(\cdot,t)|^2
\bigg)^{1/2}.
\end{equation}

Let $p\geq 2$, and we claim that there hold
\begin{subequations}\label{}
\begin{align}
 \bigg(
\int_{(Y_f)_{\rho}}|\nabla^2W(\cdot,t)|^2
\bigg)^{1/2}&\lesssim \rho^{-3}t^{-\frac{p}{3p-2}};\label{pde2.25}\\
 \bigg(
\int_{Y_f\setminus(Y_f)_{\rho}}|\Delta W(\cdot,t)|^2
\bigg)^{1/2}
&\lesssim \rho^{\frac{1}{2}-\frac{1}{p}}t^{-1}. \label{pde2.26}
\end{align}
\end{subequations}

Admitting the above two estimates temporarily,
we can proceed with our analysis. Inserting \eqref{pde2.25} and \eqref{pde2.26} back into \eqref{pde2.24}, we have
\begin{equation*}
\bigg(\int_{Y_f}|\Delta W(\cdot,t)|^2\bigg)^{1/2}\lesssim \rho^{-3}t^{\frac{-p}{3p-2}}+
\rho^{\frac{1}{2}-\frac{1}{p}}t^{-1},
\end{equation*}
and minimizing the right-side above by taking $\rho>0$ such that $\rho^{-3}t^{\frac{-p}{3p-2}}=
\rho^{\frac{1}{2}-\frac{1}{p}}t^{-1}$, there holds
\begin{equation*}
\bigg(\int_{Y_f}|\Delta W(\cdot,t)|^2\bigg)^{1/2}\lesssim
t^{\frac{-p(19p-14)}{(3p-2)(7p-2)}},
\end{equation*}
which gives us the core estimate $\eqref{f:3.11}$ by setting
$\gamma:=\frac{p(19p-14)}{(3p-2)(7p-2)}$.

\textbf{Step 3.} Show the estimate $\eqref{pde2.25}$.
We start from showing
\begin{equation}\label{pde2.31}
\int_{Y_f}\chi_i^2|\nabla^2 W(\cdot,t)|^2\lesssim
\|\nabla\chi_i\|^2_{L^{\infty}(Y)}\bigg\{
\int_{0}^{T}\int_{Y_f}\tilde{\chi_i}|\nabla^2 W|^2+\int_{Y_f}\tilde{\chi_i}|\nabla W(\cdot,t)|^2
\bigg\}.
\end{equation}
To see this, it is known that
$\partial_k W$ with $k=1,\cdots,d$ satisfies the equations in
$Y_f\times (0,T]$ as the same as $W$ does, and therefore it follows
from the estimate \eqref{pde2.3} that
\begin{equation*}
\int_{Y_f}\chi_i^2|\nabla \partial_k W(\cdot,t)|^2\lesssim
\|\nabla\chi_i\|^2_{L^{\infty}(Y)}\bigg\{
\int_{0}^{T}ds\int_{Y_f}\tilde{\chi_i}|\nabla^2 W(\cdot,s)|^2
+\int_{Y_f}\tilde{\chi_i}|\nabla W(\cdot,t)|^2
\bigg\},
\end{equation*}
which immediately gives the desired estimate $\eqref{pde2.31}$.
The desired estimate is reduced to estimating the first term in the bracket above. We also note that the interior estimate is translation-invariant within $Y_f$, and thus the center position of the estimated region can be disregarded. Therefore, we only need to establish the following estimate:
\begin{equation}\label{pde2.23}
\begin{aligned}
\int_{0}^Tdt\|\nabla^2 W(\cdot,t)\|^2_{L^2(B/2)}
&\lesssim\|\nabla^2\chi\|^2_{L^{\infty}(Y)}
\int_{0}^{T}dt\|\nabla W(\cdot,t)\|^2_{L^2(2B)},
\end{aligned}
\end{equation}
where $B:=B(y,2\rho)$ with $2B\subset \omega$ is arbitrary, and
$\chi\in C^{\infty}_{per}(Y)$ is a cut-off function
satisfying $\chi = 1$ in $B$ and supp$(\chi)\subset 2B$
with $|\nabla\chi|\lesssim \frac{1}{\rho}$. Consequently,
plugging the estimate $\eqref{pde2.23}$ back into $\eqref{pde2.31}$,
we have
\begin{equation*}
\bigg(\int_{Y_f}\chi_i^2|\nabla^2 W(\cdot,t)|^2
\bigg)^{\frac{1}{2}}
\overset{\eqref{pri:3.6}}{\lesssim} \rho^{-3} + \rho^{-1}t^{-\frac{p}{3p-2}}
\lesssim \rho^{-3}t^{-\frac{p}{3p-2}},
\end{equation*}
which is the stated estimate $\eqref{pde2.25}$.

Then, we turn to study the estimate $\eqref{pde2.23}$.
Set $v=\text{curl~}W$ in $\omega\times[0,T]$. By recalling
the equations \eqref{pde2.1}, there holds a new parabolic system:
\begin{equation*}
\left\{
\begin{aligned}
(\partial_t-\Delta)v&=0&\qquad&\text{in}\quad Y_f\times(0,T];\\
v|_{\partial Y_f}&=\text{curl~} W&\qquad&\text{for}\quad  0<t\leq T;\\
v|_{t=0}&=0.&\qquad&
\end{aligned}
\right.
\end{equation*}

For we are interested in the interior estimates, by setting $u=\chi v$,
from the above parabolic system we get
\begin{equation}\label{pde2.20}
\left\{
\begin{aligned}
(\partial_t-\Delta)u&=-\Delta \chi v-2\nabla\chi\cdot\nabla v:=g&\qquad&\text{in}\quad Y_f\times(0,T];\\
u|_{\partial Y_s}&=0,&\qquad&\text{for}\quad 0<t<T;\\
u|_{t=0}&=\chi v|_{t=0}=0.&\qquad&\\
\end{aligned}
\right.
\end{equation}
It is well known from the energy estimate $\eqref{pde2.14}$ that
$v_j\in L^2(0,T;L^2_{per}(Y_f)^d)$,
(where $v_j = \text{curl~}W_j$ represents
the $j^{\text{th}}$ component of $v$.) whereupon we can verify that
each of the components of $g$ belongs to
$L^2(0,T; H^{-1}(Y_{f}))$. Moreover, there holds
\begin{equation*}
\begin{aligned}
\|g(\cdot,t)\|_{H^{-1}(Y_f)}&\lesssim
\|\nabla\chi\cdot\nabla v(\cdot,t)\|_{H^{-1}(\text{supp}(\chi))}
+\|\Delta\chi\|_{L^{\infty}(Y)}
\|v(\cdot,t)\|_{L^2(\text{supp}(\chi))}\\
&\leq\|\nabla^2\chi\|_{L^{\infty}(Y)}
\|v(\cdot,t)\|_{L^2(\text{supp}(\chi))}
\end{aligned}
\end{equation*}
for any $t>0$, and we get
\begin{equation*}
\int_{0}^{T}dt\|g(\cdot,t)\|^2_{H^{-1}
(Y_f)}\lesssim
\|\nabla^2\chi\|^2_{L^{\infty}(Y)}\int_{0}^{T}dt
\int_{2B}|v(\cdot,t)|^2.
\end{equation*}
This together with the energy estimates for \eqref{pde2.20} gives us
\begin{equation}\label{pde2.21}
\|\nabla u\|_{L^2(0,T;L^2(Y_f))}
\lesssim
\|\nabla^2\chi\|_{L^{\infty}(Y)}\bigg(\int_{0}^{T}
\int_{2B}|v|^2\bigg)^{1/2}.
\end{equation}
Recalling the definition of $u$ and $v$, it is not hard to see that
\begin{equation*}
\begin{aligned}
\bigg(\int_{0}^{T}\int_{Y_f}\chi^2|\nabla v|^2\bigg)^{1/2}
&\lesssim
\bigg(
\int_{0}^{T}\int_{Y_f}|\nabla u|^2
\bigg)^{1/2}+
\|\nabla\chi\|_{L^{\infty}(Y)}
\bigg(
\int_{0}^{T}\int_{2B}|v|^2
\bigg)^{1/2}\\
&\overset{\eqref{pde2.21}}{\lesssim}
\|\nabla^2\chi\|_{L^{\infty}(Y)}
\bigg(
\int_{0}^{T}\int_{2B}|v|^2
\bigg)^{1/2},
\end{aligned}
\end{equation*}
and therefore, we have
 \begin{equation}\label{pde2.22}
\bigg(\int_{0}^{T}\int_{B}|\nabla v|^2\bigg)^{1/2}
\lesssim
\|\nabla^2\chi\|_{L^{\infty}(Y)}
\bigg(
\int_{0}^{T}\int_{2B}|v|^2
\bigg)^{1/2}.
\end{equation}

By noting that $v=\text{curl~}W$,
we have $-\Delta W=\text{curl~} v$ in $Y_f\times[0,T]$, where we employ
the vector identity $-\Delta W =\text{curl~}\text{curl~}W-\nabla(\nabla\cdot W)$ (see e.g.  \cite[pp.9]{Tsai18}).
Thus, from the interior $H^2$ estimates for elliptic equation (see e.g. \cite[Chapter 4]{Giaquinta-Martinazzi12}), it follows that
\begin{equation*}
\|\nabla^2 W(\cdot,t)\|_{L^2(B/2)}
\lesssim
\|\nabla \chi\|_{L^{\infty}(Y)}\|\nabla W(\cdot,t)\|_{L^2(B)}+
\|\text{curl~} v(\cdot,t)\|_{L^2(B)}
\end{equation*}
holds for any $t>0$. Hence, integrating both sides above with respect to  $t$ from 0 to $T$, we can derive that
\begin{equation*}
\begin{aligned}
\int_{0}^Tdt\|\nabla^2 W(\cdot,t)\|^2_{L^2(B/2)}
&\lesssim
\|\nabla \chi\|^2_{L^{\infty}(Y)}
\int_{0}^{T}dt\|\nabla W(\cdot,t)\|^2_{L^2(B)}
+\int_{0}^{T}dt\|\nabla v(\cdot,t)\|^2_{L^2(B)}\\
&\overset{\eqref{pde2.22}}{\lesssim}
\|\nabla^2\chi\|^2_{L^{\infty}(Y)}
\bigg\{\int_{0}^{T}dt\|\nabla W(\cdot,t)\|^2_{L^2(B)}
+\int_{0}^{T}dt\|v(\cdot,t)\|^2_{L^2(2B)}\bigg\}\\
&\lesssim\|\nabla^2\chi\|^2_{L^{\infty}(Y)}
\int_{0}^{T}dt\|\nabla W(\cdot,t)\|^2_{L^2(2B)},
\end{aligned}
\end{equation*}
which is the stated estimate $\eqref{pde2.23}$.

\textbf{Step 4.} Show the estimate $\eqref{pde2.26}$.
We start from rewriting the equations $\eqref{pde2.1}$ as follows: for any $t>0$,
\begin{equation*}
\left\{
\begin{aligned}
-\Delta W(\cdot,t)+\nabla \pi(\cdot,t)&=-\partial_t W(\cdot,t) &\quad&\text{in}\quad Y_f;\\
\nabla\cdot W(\cdot,t)&=0&\quad&\text{in}\quad Y_f;\\
W(\cdot,t)|_{\partial Y_s}&=0.&\quad&
\end{aligned}\right.
\end{equation*}

For any $1<p<\infty$, from the $L^p$ theory for
stationary Stokes system (see e.g. \cite[Chapter IV]{Galdi11}), it follows that
\begin{equation}\label{f:3.12}
\|\nabla^2W(\cdot,t)\|_{L^p(Y_f)}
+\|\nabla\pi(\cdot,t)\|_{L^p(Y_f)}
\lesssim
\|\partial_tW(\cdot,t)\|_{L^p(Y_f)},
\end{equation}
where the up to constant depends on $p,d$ and the character of $Y_f$.

Then, for any $p\geq 2$, by using H\"{o}lder's inequality,
the estimate $\eqref{f:3.12}$, and the observation $\partial_t W
=\mathcal{A}W$, as well as,
Stokes semigroup estimates (see e.g. \cite[Chapter 5]{Tsai18}), in the order, we obtain that
\begin{equation*}
\begin{aligned}
\bigg(
\int_{Y_f\setminus(Y_f)_{\rho}}|\Delta W(\cdot,t)|^2
\bigg)^{1/2}
&\lesssim\rho^{\frac{1}{2}-\frac{1}{p}}
\bigg(
\int_{Y_f\setminus(Y_f)_{\rho}}|\nabla^2 W(\cdot,t)|^p
\bigg)^{1/p}\\
&\lesssim
\rho^{\frac{1}{2}-\frac{1}{p}}
\bigg(
\int_{Y_f}|\partial_t W(\cdot,t)|^p
\bigg)^{1/p}
=\rho^{\frac{1}{2}-\frac{1}{p}}
\bigg(
\int_{Y_f}|\mathcal{A} W(\cdot,t)|^p
\bigg)^{1/p}\lesssim \rho^{\frac{1}{2}-\frac{1}{p}}t^{-1},
\end{aligned}
\end{equation*}
which is the desired estimate $\eqref{pde2.26}$.
We have completed the whole proof.
\end{proof}

\subsection{\centering Proof of Proposition $\ref{prop2.1}$}\label{subsec:3.3}

\begin{corollary}[weighted estimates]\label{lemma3.4}
Let $0<T<\infty$, and $2<p<\infty$ with $\vartheta:= \frac{p}{3p-2}$.
Suppose that $(W_j,\pi_j)$ is a weak solution of \eqref{pde2.1}
with $j=1,\cdots,d,$ and the condition $\int_{Y_f}\pi(\cdot,t)=0$ for any $t> 0$.
Then, for any $\alpha>2\vartheta$,
we have a refined weighted estimate
\begin{equation}\label{pri:3.9}
\bigg(\int_{0}^{T}dt\int_{Y_f}|\partial_t W_j(\cdot, t)|^2t^{\alpha}\bigg)^{1/2}
+ \bigg(\int_{0}^{T}dt\int_{Y_f}|\pi_j(\cdot, t)|^2t^{\alpha}\bigg)^{1/2}
\lesssim 1,
\end{equation}
where the up to constant depends on $d$, $\alpha$, $T$, and the character of
$Y_f$.
\end{corollary}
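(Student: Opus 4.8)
The plan is to run a weighted energy estimate directly on the corrector equation \eqref{pde2.1}, and then to recover the pressure by viewing \eqref{pde2.1} at each fixed time as a stationary Stokes system whose right-hand side is $-\partial_t W_j$. The only non-elementary input will be the near-initial decay $\|\nabla W_j(\cdot,t)\|_{L^2(Y_f)}\lesssim t^{-\vartheta}$ from Lemma \ref{lemma3.3}.

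First I would test \eqref{pde2.1} against $t^{\alpha}\partial_t W_j$ over $Y_f$. Since $\nabla\cdot\partial_t W_j=0$ in $Y_f$ and $\partial_t W_j=0$ on $\partial\omega$ (differentiate the boundary condition in $\eqref{pde2.1}$), the pressure contribution $\int_{Y_f}\nabla\pi_j\cdot\partial_t W_j$ vanishes after integration by parts, leaving
\begin{equation*}
t^{\alpha}\int_{Y_f}|\partial_t W_j(\cdot,t)|^2+\frac12\frac{d}{dt}\Big(t^{\alpha}\int_{Y_f}|\nabla W_j(\cdot,t)|^2\Big)=\frac{\alpha}{2}\,t^{\alpha-1}\int_{Y_f}|\nabla W_j(\cdot,t)|^2.
\end{equation*}
To make this rigorous I would work on $[\tau,T]$ with $\tau>0$, where analyticity of the Stokes semigroup supplies the needed smoothness, integrate, and then let $\tau\to0$: the endpoint term $\tau^{\alpha}\|\nabla W_j(\cdot,\tau)\|_{L^2(Y_f)}^2\lesssim\tau^{\alpha-2\vartheta}\to0$ precisely because $\alpha>2\vartheta$. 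Integrating the right-hand side and invoking Lemma \ref{lemma3.3} once more gives
\begin{equation*}
\int_0^T t^{\alpha-1}\|\nabla W_j(\cdot,t)\|_{L^2(Y_f)}^2\,dt\lesssim\int_0^T t^{\alpha-1-2\vartheta}\,dt=\frac{T^{\alpha-2\vartheta}}{\alpha-2\vartheta}<\infty,
\end{equation*}
which is exactly the place where the hypothesis $\alpha>2\vartheta$ enters; this bounds the first term of \eqref{pri:3.9}.

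For the pressure I would freeze $t>0$ and read \eqref{pde2.1} as the stationary Stokes system $-\Delta W_j(\cdot,t)+\nabla\pi_j(\cdot,t)=-\partial_t W_j(\cdot,t)$, $\nabla\cdot W_j(\cdot,t)=0$ in $Y_f$, $W_j(\cdot,t)=0$ on $\partial\omega$. The $L^2$ theory for stationary Stokes (the case $p=2$ of \eqref{f:3.12}) yields $\|\nabla\pi_j(\cdot,t)\|_{L^2(Y_f)}\lesssim\|\partial_t W_j(\cdot,t)\|_{L^2(Y_f)}$, and since $Y_f$ is connected and $\int_{Y_f}\pi_j(\cdot,t)=0$, the Poincar\'e--Wirtinger inequality upgrades this to $\|\pi_j(\cdot,t)\|_{L^2(Y_f)}\lesssim\|\partial_t W_j(\cdot,t)\|_{L^2(Y_f)}$. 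Multiplying by $t^{\alpha}$ and integrating in time reduces the pressure term of \eqref{pri:3.9} to the bound on $\int_0^T t^{\alpha}\|\partial_t W_j(\cdot,t)\|_{L^2(Y_f)}^2\,dt$ already obtained.

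The main obstacle I anticipate is not a single hard inequality but the careful handling of the low-regularity endpoint at $t=0$: one must check that the weak solution is regular enough on $(0,T]$ to legitimize the weighted energy identity and that every boundary term generated by integration in time genuinely vanishes, which hinges on the sharp near-initial decay $\|\nabla W_j(\cdot,t)\|_{L^2(Y_f)}\lesssim t^{-\vartheta}$ of Lemma \ref{lemma3.3} together with the strict inequality $\alpha>2\vartheta$. The rest is the standard Stokes energy method combined with stationary $L^2$-regularity.
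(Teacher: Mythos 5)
Your proposal is correct, and for the velocity term it is essentially the paper's argument in differential form: the paper tests \eqref{pde2.1} with $\partial_t W_j$ to get the unweighted identity $\int_{Y_f}|\partial_t W_j|^2+\tfrac12\tfrac{d}{dt}\int_{Y_f}|\nabla W_j|^2=0$, integrates over $[\tau,T]$, multiplies by $\tau^{\alpha-1}$, integrates in $\tau$ and applies Fubini; your weighted test function $t^{\alpha}\partial_t W_j$ produces exactly the integrated outcome of that manipulation, with the same two uses of Lemma \ref{lemma3.3} (to kill the endpoint term $\tau^{\alpha}\|\nabla W_j(\cdot,\tau)\|_{L^2}^2$ and to bound $\int_0^T t^{\alpha-1-2\vartheta}\,dt$) and the same role for $\alpha>2\vartheta$. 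Where you genuinely diverge is the pressure. The paper does not invoke stationary Stokes regularity there: it constructs, via Bogovskii's operator on the periodic cell, a field $v(\cdot,t)$ with $\nabla\cdot v=\pi_j(\cdot,t)$, $\|v\|_{H^1_0(Y_f)}\lesssim\|\pi_j\|_{L^2(Y_f)}$, and uses $v$ as a test function in the weak formulation to get $\|\pi_j(\cdot,t)\|_{L^2(Y_f)}\lesssim\|\partial_t W_j(\cdot,t)\|_{L^2(Y_f)}$ directly — a duality argument that needs only the weak formulation (the authors remark they deliberately avoid ``advanced analysis results'' here). Your route through the $L^2$ case of \eqref{f:3.12} plus Poincar\'e--Wirtinger (legitimate since $Y_f$ is connected and $\int_{Y_f}\pi_j(\cdot,t)=0$) reaches the same inequality; it is perfectly valid given the $C^2$ boundary of $Y_s$, and the paper itself uses \eqref{f:3.12} elsewhere, but it imports $W^{2,2}$ Stokes regularity where the paper's pressure step stays at the energy level. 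Your attention to justifying the weighted identity on $(0,T]$ and the vanishing of the $\tau\to0$ boundary term is a point the paper passes over silently, and is handled correctly.
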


\begin{proof}
The advantage of the present proof avoids using advanced analysis results\footnote{The case $\alpha =1$ simply come from a testing function argument, and
we pursue a consistency even in this refined estimate, although there exists
a way to save effort on the pressure term.}.
We firstly establish the weighted estimate for $\partial_t W_j$, and then translate
the same type estimate to the pressure term $\pi_j$. To do so, we take
$\partial_t W_j$ as the test function for the equations $\eqref{pde2.1}$, and for any $t>0$ there holds:
\begin{equation*}
\int_{Y_f}|\partial_t W_j(\cdot,t)|^2
+ \frac{1}{2}\frac{d}{dt}\int_{Y_f}|\nabla W_j(\cdot,t)|^2 = 0.
\end{equation*}
Integrating both sides of the above equation with respect to temporal variable from $\tau$ to $T$,
and then appealing to Lemma $\ref{lemma3.3}$, we obtain that
\begin{equation*}
 \int_{\tau}^{T}\int_{Y_f}|\partial_t W_j|^2
 \leq \frac{1}{2}\int_{Y_f}|\nabla W_j(\cdot,\tau)|^2
 \lesssim \tau^{-2\vartheta}.
\end{equation*}
Multiplying the factor $\tau^{\alpha-1}$ on the both sides above, and then integrating with respect to
$\tau$ from $0$ to $T$, one can further derive that
\begin{equation*}
\int_{0}^{T}d\tau
\bigg(\tau^{\alpha-1}\int_{\tau}^{T}\int_{Y_f}|\partial_t W_j|^2\bigg)
 \lesssim T^{\alpha-2\vartheta}.
\end{equation*}
By using Fubini's theorem in the left-hand side above, i.e.,
\begin{equation*}
\int_{0}^{T}d\tau
\bigg(\tau^{\alpha-1}\int_{\tau}^{T}\int_{Y_f}|\partial_t W_j|^2\bigg)
= \int_{0}^{T}dt\int_{Y_f}|\partial_t W_j(\cdot,t)|^2t^{\alpha},
\end{equation*}
we have established the following estimate
\begin{equation}\label{f:3.13}
 \int_{0}^{T}dt\int_{Y_f}|\partial_t W_j(\cdot,t)|^2t^{\alpha} \lesssim T^{\alpha-2\vartheta}.
\end{equation}

We now continue to study the pressure term, and begin with constructing
an auxiliary function associated with Bogovskii's operator, i.e., for any $t>0$, we have
\begin{equation*}
\left\{
\begin{aligned}
\nabla\cdot v(\cdot,t)&=\pi(\cdot,t),&\quad&\text{in}~Y_f;\\
v(\cdot,t)&=0,&\quad&\text{on}~\partial Y_s,\quad
y\mapsto
v(y,t)~\text{is 1-periodic}
\end{aligned}\right.
\end{equation*}
with the estimate $\|v(\cdot,t)\|_{H^1(Y_f)}
\lesssim\|\pi(\cdot,t)\|_{L^2(Y_f)}$.
By taking $v$ as a test function acting on \eqref{pde2.1}, it is not hard to
get
\begin{equation*}
\int_{Y_f}|\pi_j(\cdot,t)|^2\lesssim\int_{Y_f}|\partial_t W_j(\cdot,t)|^2.
\end{equation*}

Consequently, multiplying $t^\alpha$ on the both sides above and then integrating it
from $0$ to $T$, we obtain
\begin{equation*}
\int_{0}^{T}dt\int_{Y_f}|\pi_j(\cdot,t)|^2t^\alpha
\lesssim \int_{0}^{T}dt\int_{Y_f}|\partial_t W_j(\cdot,t)|^2t^\alpha
\overset{\eqref{f:3.13}}{\lesssim} T^{\alpha-2\vartheta}.
\end{equation*}
This together with $\eqref{f:3.13}$ leads to
the desired estimate $\eqref{pri:3.9}$, and ends the whole proof.
\end{proof}

\begin{corollary}\label{cor:1}
Let $\vartheta$ be given as in Corollary $\ref{lemma3.4}$.
Suppose that $(W_j,\pi_j)$ is a weak solution of \eqref{pde2.1}
with $j=1,\cdots,d$. Then,
for any $1<q<\infty$,
there exists a constant $\lambda\in(0,1]$ as the same as in
Lemma $\ref{lemma2.2}$ such that
a refined decay estimate
\begin{equation}\label{pri:3.10}
\bigg(\int_{Y_f}|\nabla W_j(\cdot, t)|^q\bigg)^{1/q}
\lesssim t^{-\frac{1}{2}+\lambda(\frac{1}{2}-\vartheta)}
\end{equation}
holds for any $t>0$, which further leads to
\begin{equation}\label{pri:3.11}
\|W_j\|_{L^{2}(0,T;W^{1,q}(Y_f))}\lesssim 1,
\end{equation}
where the up to constant depends on $d$, $q$, $\lambda$, $T$, and the character of $Y_f$.
\end{corollary}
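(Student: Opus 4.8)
The plan is to bootstrap from the $L^2$-type decay estimates already in hand to the $L^q$-gradient decay, and then integrate in time. First I would interpolate the gradient bound. From Lemma~$\ref{lemma3.3}$ we have the $L^2$-gradient decay $\|\nabla W_j(\cdot,t)\|_{L^2(Y_f)}\lesssim t^{-\vartheta}$ (this is $\eqref{pri:3.6}$ with $p$ chosen so that $\vartheta=\frac{p}{3p-2}$), and from Lemma~$\ref{lemma2.2}$, after revisiting its proof, a higher-integrability gradient decay: the elliptic $L^q$-regularity estimate $\eqref{f:3.12}$ gives $\|\nabla^2 W_j(\cdot,t)\|_{L^q(Y_f)}\lesssim\|\partial_t W_j(\cdot,t)\|_{L^q(Y_f)}$, and the semigroup bound $\|\mathcal{A}W_j(\cdot,t)\|_{L^q}\lesssim t^{-1}$ controls the right-hand side, so $W_j(\cdot,t)\in W^{2,q}(Y_f)$ with a $t^{-1}$-type rate; by Sobolev embedding (or a direct Gagliardo--Nirenberg interpolation between $\|\nabla W_j\|_{L^2}$ and $\|\nabla^2 W_j\|_{L^q}$) one obtains $\|\nabla W_j(\cdot,t)\|_{L^q(Y_f)}$ with a decay exponent that is a convex combination of $-\vartheta$ and the semigroup exponent. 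Choosing the interpolation parameter $\lambda=\frac{2(q-r)}{r(q-2)}$ exactly as in Lemma~$\ref{lemma2.2}$, this produces the claimed rate $-\tfrac12+\lambda(\tfrac12-\vartheta)$, which is $\eqref{pri:3.10}$.

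For $\eqref{pri:3.11}$ I would simply square $\eqref{pri:3.10}$ and integrate over $(0,T)$:
\begin{equation*}
\int_0^T\Big(\int_{Y_f}|\nabla W_j(\cdot,t)|^q\Big)^{2/q}\,dt
\lesssim\int_0^T t^{-1+2\lambda(\frac12-\vartheta)}\,dt.
\end{equation*}
This integral converges near $t=0$ precisely when $2\lambda(\tfrac12-\vartheta)>0$, i.e. when $\lambda>0$ and $\vartheta<\tfrac12$; since $\vartheta=\frac{p}{3p-2}<\tfrac12$ for every finite $p>2$, and $\lambda\in(0,1]$ by construction, the exponent is strictly greater than $-1$ and the time-integral is finite. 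Combined with the $L^\infty(0,T;L^2)\cap L^2(0,T;H^1)$ energy bound $\eqref{pde2.14}$ to handle the low-order term and to cover $q\le 2$ by interpolation with the $L^2$ estimate, this yields $\|W_j\|_{L^2(0,T;W^{1,q}(Y_f))}\lesssim 1$ with the asserted dependence of the constant.

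The main obstacle is the bookkeeping of exponents: one must verify that the interpolation parameter $\lambda$ furnished by Lemma~$\ref{lemma2.2}$ is compatible with the integrability index $q$ in the target estimate (the constraint $0\le\lambda\le1$ together with $q<r\le2$ or $2\le r<q$), and that, after interpolation, the resulting time exponent stays above $-1$ uniformly — this is where the choice of $p$ sufficiently large in Lemma~$\ref{lemma2.2}$ is used, since $\gamma\to\tfrac{19}{21}$ and $\vartheta\to\tfrac13$ as $p\to\infty$, keeping $\tfrac12-\vartheta$ bounded away from $0$. A secondary technical point is that the weighted estimate $\eqref{pri:3.9}$ from Corollary~$\ref{lemma3.4}$ is the cleanest route to controlling $\|\partial_t W_j\|$ in the relevant norm near $t=0$ without invoking heavier semigroup machinery, so I would route the $L^q$-in-space control of $\partial_t W_j$ through that weighted bound when the straightforward semigroup exponent is not quite enough.
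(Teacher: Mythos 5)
Your overall strategy is the paper's: the proof of Corollary \ref{cor:1} is exactly ``interpolate the refined $L^2$-gradient decay \eqref{pri:3.6} against a semigroup gradient estimate to get \eqref{pri:3.10}, then square, integrate in time, and use Poincar\'e's inequality (zero boundary data for $t>0$) for the low-order part of the $W^{1,q}$-norm.'' Your time-integration step and the verification that $-1+2\lambda(\tfrac12-\vartheta)>-1$ (since $\vartheta=\tfrac{p}{3p-2}<\tfrac12$ for $p>2$) match the paper.

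However, the specific ingredient you propose for the high-integrability end of the interpolation does not deliver the claimed exponent. Routing through $\|\nabla^2 W_j(\cdot,t)\|_{L^q}\lesssim\|\partial_t W_j(\cdot,t)\|_{L^q}\lesssim t^{-1}$ (via \eqref{f:3.12} and $\|\mathcal{A}W\|_{L^q}\lesssim t^{-1}$) and then Sobolev embedding gives $\|\nabla W_j(\cdot,t)\|_{L^{q}}\lesssim t^{-1}$, i.e.\ the ``semigroup exponent'' in your convex combination would be $-1$, not $-\tfrac12$. Interpolating $t^{-\vartheta}$ against $t^{-1}$ yields $t^{-(1-\lambda)-\lambda\vartheta}$, whose square is integrable near $t=0$ only when $\lambda(1-\vartheta)>\tfrac12$, which fails for $\lambda$ small (i.e.\ for part of the range of $q$), so \eqref{pri:3.11} would not follow. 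The correct (and the paper's) ingredient is the first-order semigroup estimate $\|\nabla W_j(\cdot,t)\|_{L^p}\lesssim t^{-1/2}$, already invoked in \eqref{pde2.5}; H\"older interpolation of $\|\nabla W_j\|_{L^r}$ between $\|\nabla W_j\|_{L^2}\lesssim t^{-\vartheta}$ and this $t^{-1/2}$ bound gives precisely $t^{-\frac12+\lambda(\frac12-\vartheta)}$. Two smaller points: the low-order term $\|W_j(\cdot,t)\|_{L^q}$ for $q>2$ is not controlled by the energy bound \eqref{pde2.14} alone — you need Poincar\'e's inequality with the zero Dirichlet condition, which is what the paper uses; and the detour through the weighted estimate of Corollary \ref{lemma3.4} in your last paragraph is not needed here.
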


\begin{proof}
Using the same arguments as given for Lemma $\ref{lemma2.2}$, we can
derive the estimate $\eqref{pri:3.10}$ from the interpolation between the
refined estimate $\eqref{pri:3.6}$ and the semigroup estimates (see e.g. 
\cite[Lemma 5.1]{Tsai18}), while the stated estimate $\eqref{pri:3.11}$ directly follows from
$\eqref{pri:3.10}$ coupled with Poincare's inequality (by noting zero-boundary
condition of $W_j$ for $t>0$), and this completes the proof.
\end{proof}

\begin{lemma}[antisymmetry and regularities]\label{flux}
For any $0<t<T$, let $B(\cdot,t)=\{b_{ij}(\cdot,t)\}_{1\leq i,j\leq d}$
be given as in Propostion $\ref{prop2.1}$. Then there hold
the structure properties:
\begin{equation}\label{eq:3.1}
\emph{(i)}~~\nabla\cdot B(\cdot,t)=0;\qquad\qquad
\emph{(ii)}~~\int_{Y}B(\cdot,t)=0.
\end{equation}
Moreover, there exists $\Phi(\cdot,t)=\{\Phi_{ki,j}(\cdot,t)\}_{1\leq i,j,k\leq d}$
with $\Phi_{ki,j}(\cdot,t)\in H^{1}_{\emph{loc}}(\mathbb{R}^d)$
being 1-periodic, and satisfying
\begin{equation}\label{pde:3.5}
 \nabla \cdot \Phi(\cdot, t) =
 B(\cdot,t)\quad \text{in}\quad Y
\end{equation}
under the antisymmetry condition (i.e., $\Phi_{ki,j}=-\Phi_{ik,j}$).
Also, for any $1<q<\infty$, we have the regularity estimate
\begin{equation}\label{pri:3.8}
\|\Phi(\cdot,t)\|_{W^{1,q}(Y)}\lesssim \|B(\cdot,t)\|_{L^q(Y)},
\end{equation}
where the up to constant is independent of $t$.
\end{lemma}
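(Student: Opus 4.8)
\textbf{Proof proposal for Lemma~\ref{flux}.}
The plan is to establish the two structural properties \eqref{eq:3.1} first, since they are exactly the hypotheses under which a periodic antisymmetric potential can be constructed, and then to invoke the standard flux-corrector construction together with $L^q$-regularity for the periodic problem to obtain $\Phi$ and the bound \eqref{pri:3.8}. For \emph{(i)}, recall $b_{ij}(\cdot,t)=\tilde W_j(\cdot,t)\cdot e_i-A_{ij}(t)$, so $\nabla_i b_{ij}=\nabla_i(\tilde W_j\cdot e_i)=\nabla\cdot\tilde W_j$; since $W_j$ is divergence-free in $\omega$, vanishes on $\partial\omega$, and $\tilde W_j$ is its zero-extension across the holes, the extension is divergence-free on all of $\mathbb{R}^d$ (the normal trace matches — both sides are zero — so no singular contribution appears on $\partial Y_s$), hence $\nabla\cdot B(\cdot,t)=0$. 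For \emph{(ii)}, integrate over the unit cell: $\int_Y b_{ij}=\int_{Y_f}W_j\cdot e_i-A_{ij}=0$ by the very definition \eqref{pde2.2} of $A_{ij}(t)=\int_{Y_f}W_j(y,t)\cdot e_i\,dy$. This is the only place the particular normalization of $A$ is used, and it is what makes $B$ mean-zero and therefore a legitimate ``flux'' to be written as a divergence.

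Given \emph{(i)} and \emph{(ii)}, I would construct $\Phi$ cellwise in $t$ by the classical argument (as in elliptic homogenization, e.g. Jikov–Kozlov–Oleinik or the references in the excerpt): for each fixed $t$ solve the periodic problem $-\Delta v_{ij}(\cdot,t)=b_{ij}(\cdot,t)$ on the torus $Y$ (solvable precisely because $\int_Y b_{ij}=0$), then set $\Phi_{ki,j}:=\nabla_k v_{ij}-\nabla_i v_{kj}$. Antisymmetry $\Phi_{ki,j}=-\Phi_{ik,j}$ is immediate from the definition, and $\nabla_k\Phi_{ki,j}=\Delta v_{ij}-\nabla_i(\nabla_k v_{kj})=-b_{ij}-\nabla_i(\nabla\cdot v_{\cdot j})$; one checks $\nabla\cdot v_{\cdot j}=0$ using $\nabla\cdot B=0$ from \emph{(i)} (apply $\nabla_i$ to $-\Delta v_{ij}=b_{ij}$, sum in $i$, and use uniqueness of the mean-zero periodic solution of $-\Delta(\nabla\cdot v_{\cdot j})=\nabla\cdot B_{\cdot j}=0$), so indeed $\nabla_k\Phi_{ki,j}=-b_{ij}$; the sign is then absorbed into the definition of $\Phi$ (or one sets $\Phi_{ki,j}:=\nabla_i v_{kj}-\nabla_k v_{ij}$) to match $\nabla_k\Phi_{ki,j}=b_{ij}$ as stated. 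Periodicity and $\Phi_{ki,j}(\cdot,t)\in H^1_{\mathrm{loc}}(\mathbb{R}^d)$ follow since $v_{ij}(\cdot,t)$ is a periodic $H^2$ function.

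For the regularity estimate \eqref{pri:3.8}, apply interior/periodic $W^{2,q}$ (Calderón–Zygmund) estimates for the Laplacian on the torus: for mean-zero data $b_{ij}(\cdot,t)\in L^q(Y)$ the periodic solution satisfies $\|v_{ij}(\cdot,t)\|_{W^{2,q}(Y)}\lesssim\|b_{ij}(\cdot,t)\|_{L^q(Y)}$ with a constant depending only on $q$ and $d$ (in particular independent of $t$), and since $\Phi$ is a first-order constant-coefficient combination of derivatives of $v$, $\|\Phi(\cdot,t)\|_{W^{1,q}(Y)}\lesssim\|v(\cdot,t)\|_{W^{2,q}(Y)}\lesssim\|B(\cdot,t)\|_{L^q(Y)}$, which is \eqref{pri:3.8}. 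I do not expect a genuine obstacle here; the only point requiring a little care is the passage from ``$W_j$ divergence-free in the perforated cell with zero boundary values'' to ``$\tilde W_j$ divergence-free on $\mathbb{R}^d$'' in the proof of \emph{(i)} — i.e. verifying that the zero-extension introduces no distributional divergence supported on $\partial Y_s$ — and the bookkeeping needed to conclude $\nabla\cdot v_{\cdot j}=0$ so that $\Phi$ is an exact potential for $B$ rather than merely for $B$ up to a gradient. Everything else is the standard flux-corrector package applied pointwise in $t$, with the $t$-independence of constants coming from the fact that the cell geometry $Y$, $Y_f$ does not move with $t$.
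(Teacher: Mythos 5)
Your proposal is correct and follows essentially the same route as the paper: the paper's proof also treats $t$ as a parameter, reduces everything to verifying the two structural identities \eqref{eq:3.1} — (i) from the divergence-free property of the zero-extension $\tilde W_j$ (legitimate because $W_j$ vanishes on $\partial\omega$, so no distributional divergence is created), (ii) from the definition \eqref{pde2.2} of $A$ — and then invokes the standard stationary flux-corrector construction with its $W^{1,q}$ bound. You have simply written out in full the classical potential-theoretic construction ($-\Delta v_{ij}=b_{ij}$ on the torus, $\Phi$ as an antisymmetric combination of $\nabla v$, Calder\'on--Zygmund estimates) that the paper cites implicitly, and your sign bookkeeping and the check $\nabla\cdot v_{\cdot j}=0$ are correct.
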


\begin{proof}
Here we merely treat temporal variable as a parameter. In this regard,
once the properties $\eqref{eq:3.1}$ had been verified, the existence of
the solution $\Phi$ of the equation $\eqref{pde:3.5}$ would be established as
the same as the stationary case
under the antisymmetry condition. As a consequence,
it would similarly satisfies the regularity
estimate $\eqref{pri:3.8}$. Thus, the main job is only left to check the equalities
in $\eqref{eq:3.1}$. Recalling the formula of $B$ in Proposition $\ref{prop2.1}$,
the equality (i) in $\eqref{eq:3.1}$ follows from the divergence-free condition of $\tilde{W}$,
while the equality (ii) directly comes from the definition of the effective matrix $A$ (see $\eqref{pde2.2}$).
\end{proof}

\emph{The main structure} of the proof of Proposition $\ref{prop2.1}$
is presented by the following flow chart.
\begin{figure}[H]
  \centering
\begin{tikzpicture}[->,>=stealth',global scale=0.8]

\node[state,text width=3cm] (Lemma1)
{
Lemma $\ref{lemma3.3}$
};

\node[state,
  below of=Lemma1,
  yshift=-3cm,
  anchor=center,
  text width=3cm] (Lemma2)
{
Lemma $\ref{lemma2.2}$
};

\node[state,
  right of=Lemma1,
  node distance=7cm,
  anchor=center,
  text width=3cm] (Lemma3)
{
Corollary $\ref{lemma3.4}$.
};

\node[state,
  below of=Lemma3,
  node distance=2cm,
  anchor=center,
  text width=3cm] (Lemma4)
{
Corollary $\ref{cor:1}$.
};

\node[state,
  below of=Lemma4,
  node distance=2cm,
  anchor=center,
  text width=3cm] (Lemma5)
{
Lemma $\ref{flux}$.
};


\node[state,
  right of=Lemma5,
  node distance=7cm,
  anchor=center,
  text width=3cm] (Lemma7)
{
Proposition $\ref{prop2.1}$.
};

\path (Lemma1) edge (Lemma2)
      (Lemma1) edge (Lemma3)
      (Lemma1) edge (Lemma4)
      (Lemma1) edge[bend right = 20] node[anchor=left,right]{
      }  (Lemma5)
      (Lemma2) edge node[anchor=top,below]{$+$ Lemma $\ref{lemma2.5}$ } (Lemma5)
      (Lemma3) edge [bend left] (Lemma7)
      (Lemma4) edge (Lemma7)
      (Lemma5) edge (Lemma7)
      (Lemma2) edge[bend right=15] (Lemma7);
\end{tikzpicture}
  \caption{\small The proof structure of Proposition \ref{prop2.1}}\label{DD}
\end{figure}

\noindent\textbf{The proof of Proposition \ref{prop2.1}.}
In terms of the equations $\eqref{pde2.1}$ within a finite time,
the existence of the weak solution $(W_j,\pi_j)$, as well as,
the energy estimate $\eqref{pde2.14}$ had been well known
for a long time (see e.g. \cite[Chapter3]{Temam79}).
Based upon the estimate $\eqref{pri:3.6}$ stated in Lemma \ref{lemma3.3},
we have derived weighted estimates $\eqref{pri:3.9}$ in Corollary \ref{lemma3.4},
which is in fact one part of \eqref{pde3.33a}. The other part of $\eqref{pde3.33a}$
concerns the estimate on the quantity $\|\nabla W_j\|_{L^1(0,T;L^q(Y_f))}$
for $1<q<\infty$, which has been shown in Corollary $\ref{cor:1}$.
Then we turn to the estimate $\eqref{pde3.33b}$, and it follows from
the improved estimate $\eqref{pri:3.7}$ stated in Lemma $\ref{lemma2.2}$,
which gives us the estimate on the first term of $\eqref{pde3.33b}$.
Once we had the estimate on $\|\partial_t W_j\|_{L^1(0,T;L^q(Y_f))}$,
the other two terms of $\eqref{pde3.33b}$ would be derived from
the estimate $\eqref{f:3.12}$. In the end, we address the flux corrector.
Its existence and antisymmetry properties in $\eqref{anti-sym}$
have been shown in Lemma $\ref{flux}$. On account of the equation
$\eqref{pde:3.5}$ and the estimate $\eqref{pri:3.8}$,
the desired estimates $\eqref{pri:3.5a}$ and $\eqref{pri:3.5b}$
are reduced to showing the related decay estimates of $\|B(\cdot,t)\|_{L^q(Y_f)}$
and $\|\partial_t B(\cdot,t)\|_{L^q(Y_f)}$. By
the definition of $B=\{b_{ij}(\cdot,t)\}$ in Proposition \ref{prop2.1}, it suffices to show the associated
decay estimates on $\|W(\cdot,t)\|_{L^q(Y_f)}$,
$\|\partial_tW(\cdot,t)\|_{L^q(Y_f)}$, which can been found in
Corollary $\ref{cor:1}$ and
Lemma $\ref{lemma2.2}$, respectively. The conclusion
of $\Phi_{ki,j}\in C([0,T];C_{per}(Y))$ follows from the
estimate $\eqref{pri:3.5b}$ for the case of $q>d$ (see e.g.
\cite[Theorem 2 in Subsection 5.9.2]{Evans10}).
We have completed the whole proof.
\qed

\section{Boundary-layer correctors}\label{sec4}

\noindent
Before proceeding the concrete results of boundary-layer correctors that we have introduced in Subsection $\ref{subsec:1.2}$,
we further explain the source of $J_1$ and $J_2$ presented in  $\eqref{pde3.8}$. Recalling the formula $\eqref{pde:B}$,
if computed $w_\varepsilon^{(2)}$ directly, we would find the following formula:
\begin{equation*}
\nabla \cdot w_\varepsilon^{(2)}
= \underbrace{\nabla\psi_{\varepsilon}\cdot(W^{\varepsilon}\ast G+\varepsilon\phi^{\varepsilon}\ast_2\partial G)}_{\text{layer~part}}
+\underbrace{\psi_{\varepsilon}
\frac{A}{|Y_f|}\ast_2\partial G
+\varepsilon\psi_{\varepsilon}
\phi^{\varepsilon}\ast_3\partial^2G.}_{\text{co-layer~part}}
\end{equation*}
By virtue of $\nabla\cdot u_0 = 0$ in $\Omega\times(0,T)$, there is no big problem on co-layer part, while the layer part is problematic.
Fortunately having \emph{flux corrector}, it inspires us to introduce the quantity $\nabla\psi_\varepsilon\cdot A*G$ into the layer part
(see Remark $\ref{remark:3.1}$). Therefore, the present form of $J_1$ and $J_2$ in $\eqref{pde3.8}$ is cogent.

We now state the results of boundary-layer correctors:

\begin{proposition}[Boundary-layer corrector I]\label{lemma3.6-1}
Let $0<T<\infty$. Given $f\in L^2(0,T;C^{1,\frac{1}{2}}(\bar\Omega)^d)$, assume
the same geometry assumptions on perforated domains as in Theorem
$\ref{thm1}$.
Let $J_1$ and $J_2$ be given as in $\eqref{pde3.8}$. Then, for a.e. $t\geq 0$,
there exists at least
one weak solution to
\begin{equation}\label{divc2}
\left\{
\begin{aligned}
\nabla\cdot\xi_{\varepsilon}(\cdot,t)&=\frac{\partial J_2}{\partial t} +\sum_{i} (\dashint_{O_{\varepsilon}^i}\frac{\partial J_1}{\partial t})1_{O_{\varepsilon}^i},&
\text{in}&\quad \Omega_{\varepsilon};\\
\xi_{\varepsilon}(\cdot,t)&=0,&\text{on}&\quad \partial \Omega_{\varepsilon},
\end{aligned}\right.
\end{equation}
where we recall that $1_{O_{\varepsilon}^i}$ is the indicator function of $O_{\varepsilon}^i$, and $\{O_{\varepsilon}^i\}$ is a family of the non-overlap subsets of $O_\varepsilon$, satisfying $\eqref{k-5}$. Also,
the solution satisfies the following estimate
\begin{equation}\label{pri:4.1}
 \|\xi_{\varepsilon}\|_{L^2(0,T;L^2(\Omega_\varepsilon))}
 + \varepsilon\|\nabla\xi_{\varepsilon}\|_{L^2(0,T;L^2(\Omega_\varepsilon))}
 \lesssim \varepsilon^{1/2}\|f\|_{L^2(0,T;C^{1,1/2}(\bar{\Omega}))},
\end{equation}
in which the up to constant depends on $d$, $|Y_f|$, $T$, and the characters of $Y_f$ and $\Omega$.
\end{proposition}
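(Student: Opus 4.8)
\begin{pf}
The plan is to realize $\xi(\cdot,t)$ as a Bogovskii-type solution of the divergence equation in $\eqref{divc2}$ on the perforated domain $\Omega_\varepsilon$, and then to reduce the estimate $\eqref{pri:4.1}$ to a single bound on the right-hand side of $\eqref{divc2}$ in $L^2(0,T;L^2(\Omega_\varepsilon))$.

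\emph{Step 1: solvability.} Write $g(\cdot,t):=\partial_tJ_2(\cdot,t)+\sum_i\big(\dashint_{O_\varepsilon^i}\partial_tJ_1(\cdot,t)\big)1_{O_\varepsilon^i}$. Since $J_1$ is supported in $O_\varepsilon=\bigcup_iO_\varepsilon^i$ (see Remark $\ref{remark:4.1}$) and the pieces $O_\varepsilon^i$ are essentially disjoint, integrating over $\Omega_\varepsilon$ telescopes the sum, so that $\int_{\Omega_\varepsilon}g(\cdot,t)=\partial_t\int_{\Omega_\varepsilon}(J_1+J_2)(\cdot,t)$; by $\eqref{pde:B}$ and $w_\varepsilon^{(2)}|_{\partial\Omega_\varepsilon}=0$ this equals $-\partial_t\int_{\Omega_\varepsilon}\nabla\cdot w_\varepsilon^{(2)}=0$, which is the required compatibility condition. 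Differentiation under the integral sign is legitimate in view of the regularity of $(W,\pi,\Phi)$ (Proposition $\ref{prop2.1}$), of $\phi$ (Proposition $\ref{prop3.1}$) and of $p_0$ (Proposition $\ref{P:1}$). With $\int_{\Omega_\varepsilon}g(\cdot,t)=0$ at hand, a divergence solver adapted to the periodic geometry $\eqref{RA}$ (the same Bogovskii-type construction used for the corrector of Bogovskii's operator, cf.\ $\eqref{divc1}$, now applied on $\Omega_\varepsilon$) produces, for a.e.\ $t$, a function $\xi(\cdot,t)\in H^1_0(\Omega_\varepsilon)^d$ solving $\eqref{divc2}$ with $\|\nabla\xi(\cdot,t)\|_{L^2(\Omega_\varepsilon)}\lesssim\varepsilon^{-1}\|g(\cdot,t)\|_{L^2(\Omega_\varepsilon)}$; combining this with the Poincar\'e inequality on $\Omega_\varepsilon$ for functions vanishing on the holes (whose constant is $\lesssim\varepsilon$) gives also $\|\xi(\cdot,t)\|_{L^2(\Omega_\varepsilon)}\lesssim\|g(\cdot,t)\|_{L^2(\Omega_\varepsilon)}$. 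Hence $\eqref{pri:4.1}$ follows once we establish
\[
\|g\|_{L^2(0,T;L^2(\Omega_\varepsilon))}\lesssim\varepsilon^{1/2}\,\|f\|_{L^2(0,T;C^{1,1/2}(\bar\Omega))}.
\]

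\emph{Step 2: estimating $g$.} I would split $g$ according to whether it carries the factor $\psi_\varepsilon$ (the co-layer/cut-off terms) or $\nabla\psi_\varepsilon$ (the layer terms supported in $O_\varepsilon$, a set of measure $\lesssim\varepsilon$). For the $\psi_\varepsilon$-terms $\partial_t\big(\psi_\varepsilon\frac{A}{|Y_f|}\ast_2\partial G+\varepsilon\psi_\varepsilon\phi^\varepsilon\ast_3\partial^2G\big)$ I use that the spatial mollifier $S_\delta$ commutes with the temporal convolution and that $\nabla\cdot u_0=A\ast_2\partial F=0$ in $\Omega$ (from $\eqref{pde1.2}$); this yields $A\ast(\varphi_\varepsilon F)=\varphi_\varepsilon u_0$ and $A\ast_2\partial G=S_\delta(\nabla\varphi_\varepsilon\cdot u_0)$, so these terms vanish on the co-layer $\Sigma_\varepsilon$ and, where $\nabla\varphi_\varepsilon\ne0$, are controlled using $u_0\cdot\vec n=0$ on $\partial\Omega$ together with the Schauder bound $\|p_0\|_{L^2(0,T;C^{2,1/2}(\bar\Omega))}\lesssim\|f\|_{L^2(0,T;C^{1,1/2}(\bar\Omega))}$ (Proposition $\ref{P:1}$), the estimate $\|\partial_tA\|_{L^1(0,T)}<\infty$ (Lemma $\ref{lemma2.5}$), the corrector-of-Bogovskii estimate $\eqref{pri:4.2}$, and the mollifier bounds with $\delta=\varepsilon/2$; since the relevant region has measure $\lesssim\varepsilon$, this produces the factor $\varepsilon^{1/2}$. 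For the $\nabla\psi_\varepsilon$-terms I treat $\partial_t\big(\varepsilon\nabla\psi_\varepsilon\cdot(\phi^\varepsilon\ast_2\partial G)\big)$ directly (the explicit prefactor $\varepsilon$ already compensates the $\varepsilon^{-1}$ from $\nabla\psi_\varepsilon$), while $\partial_t\big(\nabla\psi_\varepsilon\cdot(A\ast G)\big)$ is handled by the radial structure $\nabla\psi_\varepsilon=-|\nabla\psi_\varepsilon|\,\vec n$ near $\partial\Omega$ and $A\ast G=S_\delta(\varphi_\varepsilon u_0)$, again invoking $u_0\cdot\vec n=0$. The remaining term $\sum_i\big(\dashint_{O_\varepsilon^i}\partial_tJ_1\big)1_{O_\varepsilon^i}$ is the oscillating one: here the flux-corrector identity $\nabla_k\Phi_{ki,j}=\tilde W_j\cdot e_i-A_{ij}$ of $\eqref{anti-sym}$ (via the chain rule) rewrites $\nabla\psi_\varepsilon\cdot((W^\varepsilon-A)\ast G)$ as $\varepsilon$ times a bounded field plus a divergence, and the antisymmetry $\Phi_{ki,j}=-\Phi_{ik,j}$ together with the cell-average cancellation $\int_Y(\tilde W_j\cdot e_i-A_{ij})=0$ built into $\eqref{pde2.2}$ makes the local averages $\dashint_{O_\varepsilon^i}\partial_tJ_1$ an order of $\varepsilon$ smaller than $\partial_tJ_1$ itself. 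These manipulations are the content of the auxiliary Lemmas $\ref{lemma3.2}$ and $\ref{lemma3.5}$, and they bring the layer contribution down to $O(\varepsilon^{1/2})$ in $L^2$.

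\emph{Step 3: conclusion.} Adding the co-layer and layer estimates gives the displayed bound on $g$, hence $\eqref{pri:4.1}$, with the stated dependence of the constant. I expect the layer estimate of Step 2 to be the main obstacle: every dangerous term is pointwise of size $O(\varepsilon^{-1})$ on a set of measure $O(\varepsilon)$, so a crude $L^2$-bound yields only $O(\varepsilon^{-1/2})$, which the divergence solver's unavoidable loss of $\varepsilon^{-1}$ would render useless; the whole design -- the two radial cut-offs $\varphi_\varepsilon,\psi_\varepsilon$, the flux corrector $\Phi$, and the conditional-expectation-type quantity $\sum_i(\dashint_{O_\varepsilon^i}J_1)1_{O_\varepsilon^i}$ -- is arranged exactly so that this layer data is in fact $O(\varepsilon^{1/2})$ in $L^2$.
\end{pf}
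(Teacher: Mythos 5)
Your overall architecture is the paper's: verify the compatibility condition $\int_{\Omega_\varepsilon}g(\cdot,t)=0$ (Lemma~\ref{lemma3.6}), invoke the Bogovskii operator on the perforated domain with the $\varepsilon$-dependent constant of Theorem~\ref{thm4}, and reduce $\eqref{pri:4.1}$ to $\|g\|_{L^2(0,T;L^2(\Omega_\varepsilon))}\lesssim\varepsilon^{1/2}\|f\|_{L^2(0,T;C^{1,1/2}(\bar\Omega))}$. Your handling of $\partial_t J_2$ is also essentially Lemma~\ref{lemma:5.1}: the radial structure $\nabla\psi_\varepsilon=-|\nabla\psi_\varepsilon|\vec n$ plus $\vec n\cdot u_0=0$ for the $A\ast G$ term, the divergence-free identity $A\ast_2\partial F=0$ for the co-layer term, and the explicit powers of $\varepsilon$ on the $\phi^\varepsilon$-terms. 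The gap is in the one term where the proposition actually lives, namely $\sum_i\big(\dashint_{O_\varepsilon^i}\partial_tJ_1\big)1_{O_\varepsilon^i}$.

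The mechanism you offer there --- the cell-average cancellation $\int_Y(\tilde W_j\cdot e_i-A_{ij})=0$ making the local averages ``an order of $\varepsilon$ smaller'' --- does not work as stated: $O_\varepsilon^i$ is not a union of periodicity cells $\varepsilon(Y+z)$ but a thin curved cylinder following $\partial\Omega$, and the integrand carries the non-periodic weights $\nabla\psi_\varepsilon$ and $G$, so the weighted average of the mean-zero periodic field over $O_\varepsilon^i$ is a priori $O(\varepsilon^{-1})$, not $O(1)$. The argument that does work (Lemma~\ref{*lemma4.1}) substitutes $W^\varepsilon-A=(\nabla\cdot\Phi)^\varepsilon$ and integrates by parts over $O_\varepsilon^i$: the antisymmetry in $\eqref{anti-sym}$ kills the $\nabla^2\psi_\varepsilon\sim\varepsilon^{-2}$ contribution and leaves an interior term of size $O(|O_\varepsilon^i|)$, \emph{but} it also produces a boundary integral over $\partial O_\varepsilon^i$ which is naively of size $\varepsilon\cdot\varepsilon^{-1}\cdot|\partial O_\varepsilon^i|\sim\varepsilon^{-1}|O_\varepsilon^i|$ --- exactly the bad order you are trying to beat. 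Removing it requires the lattice-aligned decomposition of $O_\varepsilon$ into cylinders and corners (Subsection~\ref{subsec:4.2}), pairing opposite lateral faces on which $\Phi^\varepsilon$ and $\nabla\psi_\varepsilon$ take identical values (periodicity under $\varepsilon\mathbb{Z}^d$-translation or rational rotation, and the radial construction of $\psi_\varepsilon$) while the outward normals are opposite, so that only the differences $G-G(\mathrm{T}^{-1}\cdot)=O(\varepsilon\|\nabla G\|_{L^\infty})$ and the symmetric-difference portions of the faces, of measure $O(\varepsilon^d)$ by $\eqref{eq:4.4}$, survive. None of this is contained in Lemmas~\ref{lemma3.2} and~\ref{lemma3.5}, which you cite for ``these manipulations''; they only provide the smoothing estimates on $G$ and $G-F$. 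As written, your plan asserts the key bound $\big|\dashint_{O_\varepsilon^i}\partial_tJ_1\big|=O(1)$ without a mechanism that delivers it.
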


\begin{proposition}[Boundary-layer corrector II]\label{lemma3.6-2}
Assume the same conditions as in Theorem $\ref{thm1}$.
Let $J_1$ be given as in $\eqref{pde3.8}$. Then,
for a.e. $t\geq 0$,  there exists at least
one weak solution to
\begin{equation}\label{divc3}
\left\{
\begin{aligned}
\nabla\cdot\eta_{\varepsilon}(\cdot,t)&=\frac{\partial J_1}{\partial t} -\sum_{i} (\dashint_{O_{\varepsilon}^i}\frac{\partial J_1}{\partial t})1_{O_{\varepsilon}^i},&
\text{in}&\quad O_{\varepsilon};\\
\eta_{\varepsilon}(\cdot,t)&=0,&\text{on}&\quad \partial O_{\varepsilon},
\end{aligned}\right.
\end{equation}
satisfying the following estimate
\begin{equation}\label{pri:5.2}
 \|\eta_{\varepsilon}\|_{L^2(0,T;L^2(\Omega_\varepsilon))}
 + \varepsilon\|\nabla\eta_{\varepsilon}\|_{L^2(0,T;L^2(\Omega_\varepsilon))}
 \lesssim \varepsilon^{1/2}\|f\|_{L^2(0,T;C^{1,1/2}(\bar{\Omega}))},
\end{equation}
where the up to constant depends on $d$, $|Y_f|$, $T$, and the characters of $Y_f$ and $\Omega$.
\end{proposition}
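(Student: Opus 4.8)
The plan is to solve \eqref{divc3} componentwise with Bogovskii's operator and then track the $\varepsilon$-powers through a dilation argument, so that the whole difficulty is concentrated in an $L^2$-in-time bound for $\partial_t J_1$ over the thin layer $O_\varepsilon$.

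First, fix $t$ and write $g(\cdot,t)$ for the right-hand side of \eqref{divc3}. By the construction of the decomposition $\{O_\varepsilon^i\}$ (see Remark~\ref{remark:4.1} and Subsection~\ref{subsec:4.2}), $\dashint_{O_\varepsilon^i}g(\cdot,t)=0$ on every piece, so on each $O_\varepsilon^i$ Bogovskii's operator produces $\eta_i(\cdot,t)\in H^1_0(O_\varepsilon^i)^d$ with $\nabla\cdot\eta_i=g(\cdot,t)$; this yields the desired (non-unique) weak solution $\eta=\sum_i\eta_i$, extended by zero to $\Omega_\varepsilon$. Since each $O_\varepsilon^i$ dilated by $\varepsilon^{-1}$ is a domain of unit size with uniformly controlled geometry, scale-invariance of the Bogovskii estimate gives $\|\nabla\eta_i(\cdot,t)\|_{L^2(O_\varepsilon^i)}\lesssim\|g(\cdot,t)\|_{L^2(O_\varepsilon^i)}$ with a constant independent of $i$ and $\varepsilon$, while Poincar\'e's inequality on $O_\varepsilon^i$ (diameter $\lesssim\varepsilon$) gives $\|\eta_i(\cdot,t)\|_{L^2(O_\varepsilon^i)}\lesssim\varepsilon\|\nabla\eta_i(\cdot,t)\|_{L^2(O_\varepsilon^i)}$. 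Summing over the disjoint pieces and integrating in $t$,
\[
\|\eta\|_{L^2(0,T;L^2(\Omega_\varepsilon))}+\varepsilon\|\nabla\eta\|_{L^2(0,T;L^2(\Omega_\varepsilon))}\lesssim\varepsilon\|g\|_{L^2(0,T;L^2(O_\varepsilon))}\lesssim\varepsilon\|\partial_t J_1\|_{L^2(0,T;L^2(O_\varepsilon))},
\]
the last inequality because subtracting cell averages does not increase the $L^2$ norm. Hence it remains to prove $\|\partial_t J_1\|_{L^2(0,T;L^2(O_\varepsilon))}\lesssim\varepsilon^{-1/2}\|f\|_{L^2(0,T;C^{1,1/2}(\bar\Omega))}$.

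To this end I would differentiate $J_1=\nabla\psi_\varepsilon\cdot\big((W^\varepsilon-A)\ast G\big)$ in $t$, keeping the temporal convolution intact: using $\frac{d}{dt}\int_0^t K(t-s)h(s)\,ds=K(0)h(t)+\int_0^t K'(t-s)h(s)\,ds$ together with $W_j(\cdot,0)=e_j$ and $A_{ij}(0)=|Y_f|\delta_{ij}$ (from \eqref{pde2.1}, \eqref{pde2.2}), one gets
\[
\partial_t J_1=(1-|Y_f|)\,\nabla\psi_\varepsilon\cdot G(\cdot,t)+\nabla\psi_\varepsilon\cdot\big((\partial_t W^\varepsilon-\partial_t A)\ast G\big).
\]
For the first term, $|\nabla\psi_\varepsilon|\lesssim\varepsilon^{-1}$, $|O_\varepsilon|\lesssim\varepsilon$ and $\|G(\cdot,t)\|_{L^\infty(\Omega)}\lesssim\|f(\cdot,t)-\nabla p_0(\cdot,t)\|_{L^\infty(\Omega)}\lesssim\|f(\cdot,t)\|_{C^{1,1/2}(\bar\Omega)}$ (the last bound by Proposition~\ref{P:1}) give a pointwise-in-$t$ estimate by $\varepsilon^{-1/2}\|f(\cdot,t)\|_{C^{1,1/2}(\bar\Omega)}$. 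For the second term, periodicity of $W$ and $|O_\varepsilon|\lesssim\varepsilon$ give $\|\partial_t W(\cdot/\varepsilon,\tau)\|_{L^2(O_\varepsilon)}\lesssim\varepsilon^{1/2}\|\partial_t W(\cdot,\tau)\|_{L^2(Y_f)}$, whence
\[
\big\|\nabla\psi_\varepsilon\cdot\big((\partial_t W^\varepsilon-\partial_t A)\ast G\big)(\cdot,t)\big\|_{L^2(O_\varepsilon)}\lesssim\varepsilon^{-1/2}\int_0^t\big(\|\partial_t W(\cdot,t-s)\|_{L^2(Y_f)}+|\partial_t A(t-s)|\big)\|f(\cdot,s)\|_{C^{1,1/2}(\bar\Omega)}\,ds,
\]
and Young's convolution inequality in time, together with $\|\partial_t W\|_{L^1(0,T;L^2(Y_f))}\lesssim1$ from Proposition~\ref{prop2.1} (estimate \eqref{pde3.33b} with $q=2$) and $\|\partial_t A\|_{L^1(0,T)}\lesssim1$ from Lemma~\ref{lemma2.5}, closes the estimate. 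The proof of Proposition~\ref{lemma3.6-1} proceeds along the same lines, with the roles of $J_2$ and the cell-averages of $J_1$, and with the ambient domain $\Omega_\varepsilon$ replacing $O_\varepsilon$.

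The main obstacle is the weak temporal regularity of the corrector near $t=0$: $\partial_t W$ is controlled only in $L^1(0,T;L^q(Y_f))$ (and only with a time weight in $L^2_tL^2_y$), so a crude $L^\infty_tL^\infty_x$ bound on $(\partial_t W^\varepsilon)\ast G$ is unavailable; this is precisely why the time derivative in $\partial_t J_1$ must be made to fall on $W$ rather than on $G$, retaining the temporal convolution structure so that Young's inequality with the $L^1$-in-time corrector norm applies. The remaining, routine point is the bookkeeping of $\varepsilon$-powers: the factor $\nabla\psi_\varepsilon$ costs $\varepsilon^{-1}$, the measure $|O_\varepsilon|\lesssim\varepsilon$ refunds $\varepsilon^{1/2}$, and the Poincar\'e step on the $\varepsilon$-sized pieces in the Bogovskii construction supplies the remaining power $\varepsilon$, giving the stated $\varepsilon^{1/2}$.
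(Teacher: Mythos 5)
Your construction (piecewise Bogovskii on the decomposition $\{O_\varepsilon^i\}$ with a scale-invariant constant, Poincar\'e on the $\varepsilon$-sized pieces, then reduction to an $L^2_{t,x}$ bound for $\partial_t J_1$ on $O_\varepsilon$ via $|\nabla\psi_\varepsilon|\lesssim\varepsilon^{-1}$, periodic rescaling giving the factor $|O_\varepsilon|^{1/2}\sim\varepsilon^{1/2}$, and Young/Minkowski in time against $\|\partial_t W\|_{L^1(0,T;L^2(Y_f))}+\|A'\|_{L^1(0,T)}$) is exactly the paper's argument, with your explicit evaluation $W^\varepsilon(\cdot,0)-A(0)=(1-|Y_f|)I$ replacing the paper's equivalent direct bound $\|W^\varepsilon(0)-A(0)\|_{L^2(O_\varepsilon)}\lesssim\varepsilon^{1/2}$. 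The proof is correct and takes essentially the same route.
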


\begin{proposition}[Boundary-layer corrector III]\label{lemma3.6-3}
Suppose $\xi_{\varepsilon}$ and $\eta_{\varepsilon}$ are the two solutions of $\eqref{divc2}$ and $\eqref{divc3}$ given in Propositions $\ref{lemma3.6-1}$ and $\ref{lemma3.6-2}$, respectively.
Let $\hat{\xi_{\varepsilon}}$ and $\hat{\eta_{\varepsilon}}$ be defined as in $\eqref{pde3.11}$\footnote{Note that
$\eta_{\varepsilon}$ can be trivially zero-extended to the whole region $\Omega$.}:
Moreover, there also hold the regularity estimates:
\begin{subequations}
\begin{align}
\|\hat\xi_{\varepsilon}\|_{L^2(0,T;L^2(\Omega_\varepsilon))}
+ \varepsilon\|\nabla\hat\xi_{\varepsilon}\|_{L^2(0,T;L^2(\Omega_\varepsilon))}
&\lesssim \varepsilon^{1/2}\|f\|_{L^2(0,T;C^{1,1/2}(\bar\Omega))}; \label{pri:5.3a} \\
\|\hat\eta_{\varepsilon}\|_{L^2(0,T;L^2(\Omega_\varepsilon))}
+ \varepsilon\|\nabla\hat\eta_{\varepsilon}\|_{L^2(0,T;L^2(\Omega_\varepsilon))}
&\lesssim \varepsilon^{1/2}\|f\|_{L^2(0,T;C^{1,1/2}(\bar\Omega))}, \label{pri:5.3b}
\end{align}
\end{subequations}
where the multiplicative constant depends on $d$, $|Y_f|$, $T$, and the characters of
$Y_f$ and $\Omega$.
\end{proposition}

It is well known that for the problem: $\nabla\cdot v = g$ in $\Omega$;
and $v=0$ on $\partial\Omega$ with the compatibility condition $\int_{\Omega}f=0$,
there exists at least one solution $v\in H^1_0(\Omega)^d$ together with a constant $C$, depending on $\Omega$, such that
$\|v\|_{H^1_0(\Omega)}\leq C\|f\|_{L^2(\Omega)}$
(see \cite[lemma III.3.1]{Galdi11}).
Therefore, if replaced by a perforated domain,
the constant $C$ will additionally depend on the size $\varepsilon$ of holes in the perforated domain,
which has been stated below.
\begin{theorem}[Bogovskii's operator on perforated domains
\cite{Chechkin-Piatnitski-Shamaev2007}]\label{thm4}
Assume the same geometry assumptions on the perforated domain
as in Theorem $\ref{thm1}$. Then, for any $g\in L^2(\Omega_{\varepsilon})$ with
$\int_{\Omega_{\varepsilon}}g =0$,
there exists a vector-valued function
$v_{\varepsilon}\in H^{1}_0(\Omega_{\varepsilon})^d$
such that $\nabla\cdot v_{\varepsilon} =g$ in $\Omega_\varepsilon$, satisfying
\begin{equation}\label{pde3.21}
\| v_{\varepsilon}
\|_{L^2(\Omega_{\varepsilon})}
\leq C\varepsilon\|\nabla v_{\varepsilon}
\|_{L^2(\Omega_{\varepsilon})}
\leq C
\|g\|_{L^2(\Omega_{\varepsilon})},
\end{equation}
where the constant $C$ is independent of $\varepsilon$ and $g$.
\end{theorem}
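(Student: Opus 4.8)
The plan is to build $v_\varepsilon$ by \emph{cellwise correction of the global Bogovskii solution}, in the spirit of the Tartar--Allaire restriction operator. Write the interior cells as $\varepsilon(Y+z_k)$ for the $k$'s appearing in the union \eqref{RA}, set $\varepsilon Y_f^k:=\varepsilon(Y_f+z_k)$, $\varepsilon Y_s^k:=\varepsilon(Y_s+z_k)$, and let $\Omega_\varepsilon^{\mathrm{bl}}:=\Omega\setminus\overline{\bigcup_k\varepsilon(Y+z_k)}$ be the hole-free boundary layer, which sits in an $O(\varepsilon)$-neighbourhood of $\partial\Omega$. First I would rescale the Bogovskii operator on the reference cell $Y_f$ (bounded with $C^2$ boundary, so \cite[Lemma III.3.1]{Galdi11} applies) by $x=\varepsilon y$; this gives, on each $\varepsilon Y_f^k$, a bounded operator $\mathcal B_k$ sending $h\in L^2(\varepsilon Y_f^k)$ with $\int h=0$ to $\mathcal B_kh\in H^1_0(\varepsilon Y_f^k)^d$ with $\nabla\cdot\mathcal B_kh=h$ and $\|\mathcal B_kh\|_{L^2}\le c_0\varepsilon\|\nabla\mathcal B_kh\|_{L^2}\le c_0\varepsilon\|h\|_{L^2}$, with $c_0$ depending only on $Y_f$. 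Setting $m_k:=\dashint_{\varepsilon Y_f^k}g$ and letting $v^{(1)}$ be the zero-extension of $\sum_k\mathcal B_k(g-m_k)$, one gets $v^{(1)}\in H^1_0(\Omega_\varepsilon)^d$ with $\|v^{(1)}\|_{L^2(\Omega_\varepsilon)}+\varepsilon\|\nabla v^{(1)}\|_{L^2(\Omega_\varepsilon)}\lesssim\varepsilon\|g\|_{L^2(\Omega_\varepsilon)}$, and the residue $h:=g-\nabla\cdot v^{(1)}$ equals $m_k$ on $\varepsilon Y_f^k$ and $g$ on $\Omega_\varepsilon^{\mathrm{bl}}$, with $\int_{\Omega_\varepsilon}h=0$ and $\|h\|_{L^2(\Omega_\varepsilon)}\le\|g\|_{L^2(\Omega_\varepsilon)}$ by Jensen's inequality.

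Next I would solve the residue globally and then fix the holes. Extend $h$ to $\bar h\in L^2(\Omega)$ by $\bar h=g$ on $\Omega_\varepsilon^{\mathrm{bl}}$, $\bar h=m_k$ on $\varepsilon Y_f^k$, and---this is the key choice---$\bar h=0$ on each hole $\varepsilon\overline{Y_s}^k$, so that $\int_\Omega\bar h=0$ and $\|\bar h\|_{L^2(\Omega)}=\|h\|_{L^2(\Omega_\varepsilon)}$. Solving $\nabla\cdot V^{(2)}=\bar h$ in $\Omega$ by the usual Bogovskii operator yields $V^{(2)}\in H^1_0(\Omega)^d$ with $\|V^{(2)}\|_{L^2(\Omega)}+\|\nabla V^{(2)}\|_{L^2(\Omega)}\lesssim\|g\|_{L^2(\Omega_\varepsilon)}$. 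Since $V^{(2)}$ need not vanish on the hole boundaries, on each interior cell I would subtract $z_k:=\theta_kV^{(2)}+B_k$, where $\theta_k(x)=\theta(x/\varepsilon-z_k)$ for a fixed cut-off $\theta\in C^\infty_{\mathrm{per}}(Y)$ that equals $1$ near $\partial Y_s$ and is supported away from $\partial Y$ (so $|\nabla\theta_k|\lesssim\varepsilon^{-1}$), and $B_k\in H^1_0(\varepsilon Y_f^k)^d$ solves $\nabla\cdot B_k=-\nabla\cdot(\theta_kV^{(2)})$. The choice $\bar h=0$ on holes makes the compatibility condition $\int_{\varepsilon Y_f^k}\nabla\cdot(\theta_kV^{(2)})=-\int_{\varepsilon Y_s^k}\bar h=0$ hold, so $B_k$ exists and, by the scaled reference-cell bound, $\|B_k\|_{L^2}+\varepsilon\|\nabla B_k\|_{L^2}\lesssim\|V^{(2)}\|_{L^2(\varepsilon Y_f^k)}+\varepsilon\|\nabla V^{(2)}\|_{L^2(\varepsilon Y_f^k)}$. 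Putting $v^{(2)}:=V^{(2)}-\sum_kz_k$ and summing over $k$ (disjoint supports), one checks $v^{(2)}\in H^1_0(\Omega_\varepsilon)^d$, $\nabla\cdot v^{(2)}=h$ in $\Omega_\varepsilon$, and $\|\nabla v^{(2)}\|_{L^2(\Omega_\varepsilon)}\lesssim\varepsilon^{-1}\|g\|_{L^2(\Omega_\varepsilon)}$.

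Finally, set $v_\varepsilon:=v^{(1)}+v^{(2)}\in H^1_0(\Omega_\varepsilon)^d$; then $\nabla\cdot v_\varepsilon=(g-h)+h=g$ in $\Omega_\varepsilon$ and $\varepsilon\|\nabla v_\varepsilon\|_{L^2(\Omega_\varepsilon)}\lesssim\|g\|_{L^2(\Omega_\varepsilon)}$, and the $\varepsilon$-uniform Poincar\'e inequality $\|w\|_{L^2(\Omega_\varepsilon)}\le C\varepsilon\|\nabla w\|_{L^2(\Omega_\varepsilon)}$ for $w\in H^1_0(\Omega_\varepsilon)$ (valid because such $w$ vanishes on the hole boundaries in the bulk, giving cellwise Poincar\'e at scale $\varepsilon$, and on $\partial\Omega$ across the $O(\varepsilon)$-layer $\Omega_\varepsilon^{\mathrm{bl}}$) applied to $v_\varepsilon$ gives $\|v_\varepsilon\|_{L^2(\Omega_\varepsilon)}\le C\varepsilon\|\nabla v_\varepsilon\|_{L^2(\Omega_\varepsilon)}\le C\|g\|_{L^2(\Omega_\varepsilon)}$, which is \eqref{pde3.21}. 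I expect the main difficulty to be the bookkeeping that keeps the zero-mean compatibility conditions propagating through the decomposition---above all the choice that $\bar h$ vanishes on the solid inclusions, which is exactly what makes each hole corrector $B_k$ solvable---together with tracking the single power of $\varepsilon$ lost in $\nabla v^{(2)}$ and recovered by the perforated Poincar\'e inequality; verifying that inequality uniformly in $\varepsilon$ (using $\mathrm{dist}(\partial Y,\partial Y_s)>0$ and the $\kappa_0\varepsilon$-separation in \eqref{RA}) is the only genuinely geometric ingredient.
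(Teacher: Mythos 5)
The paper does not prove Theorem \ref{thm4}: it is imported verbatim from the literature, with only the citation \cite{Chechkin-Piatnitski-Shamaev2007} standing in for a proof. Your construction is correct and is essentially the standard restriction-operator argument of Tartar--Allaire type that underlies the cited result: a global Bogovskii solve on $\Omega$, hole correctors $\theta_k V^{(2)}+B_k$ whose solvability hinges on choosing the extension $\bar h$ to vanish on the solid inclusions, and the $\varepsilon$-uniform Poincar\'e inequality on $H^1_0(\Omega_\varepsilon)$ to convert the gradient bound $\varepsilon\|\nabla v_\varepsilon\|_{L^2}\lesssim\|g\|_{L^2}$ into the first inequality of \eqref{pde3.21}. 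The bookkeeping you flag does close: the compatibility $\int_{\varepsilon Y_f^k}\nabla\cdot(\theta_k V^{(2)})=-\int_{\varepsilon Y_s^k}\bar h=0$ is exactly right, the scaled cell Bogovskii bounds sum over $k$ by disjointness of supports, and the layer part of the Poincar\'e inequality only uses that the unperforated collar has width $O(\kappa_0\varepsilon)$. One minor remark: your first step (the cellwise correction $v^{(1)}$ replacing $g$ by its cell averages) is harmless but not actually needed --- extending $g$ itself by zero into the holes already makes each hole corrector solvable, and the final estimate is unchanged since the loss of one power of $\varepsilon$ comes entirely from the cut-offs $\theta_k$ and is recovered by the perforated Poincar\'e inequality.
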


By virtue of Theorem $\ref{thm4}$,
the existence of the solution $\xi_{\varepsilon}$ to $\eqref{divc2}$ is reduced to
verifying compatibility condition, while the desired estimate $\eqref{pri:4.1}$  follows from the estimates on the quantities
\begin{equation}\label{f:1}
\bigg(
\int_{0}^{T}dt\int_{\Omega_{\varepsilon}}
\big|\frac{\partial J_2}{\partial t}\big|^2\bigg)^{1/2}
\quad\text{and}\quad
\bigg(\int_{0}^{T}dt\int_{\Omega_{\varepsilon}}\Big|
\sum_{i}(\dashint_{O_{\varepsilon}^i}\frac{\partial J_1}{\partial t}) 1_{O_{\varepsilon}^i}\Big|^2
\bigg)^{1/2},
\end{equation}
which will be addressed in Lemmas \ref{lemma:5.1} and \ref{*lemma4.1},   separately.

\emph{The key ideas} are summarized as follows:
\begin{itemize}
  \item By introducing \textbf{radial cut-off function}, together with the special structure
of the effective solution $\eqref{pde1.2}$ on the boundary, i.e.,
$\vec{n}\cdot u_0 = 0$ on $\partial\Omega\times(0,T)$, it is possible to
produce a desired smallness near the boundary, simply by using Poincar\'e's inequality (see Lemma \ref{lemma:5.1}), which is a crucial observation
for Proposition $\ref{lemma3.6-1}$.
  \item By \textbf{decomposing the boundary layer region},
      the proof of Proposition $\ref{lemma3.6-2}$
\textbf{does not} rely on Theorem $\ref{thm4}$.
The solution $\eta_{\varepsilon}$ to the equation
$\eqref{divc3}$ consists of ``piecewise'' solutions of cell problems,
according to the decomposed element $O_\varepsilon^i$.
On the other hand,  due to $O_\varepsilon^{i}$ obtained by cutting
the boundary layer region $O_\varepsilon$ in the normal direction $\vec{n}$ associated with $\partial\Omega$, this enables us to consistently apply the antisymmetry property of the flux corrector (see Lemma $\ref{*lemma4.1}$), which is also important to the proof of Proposition $\ref{lemma3.6-1}$.
\end{itemize}


\subsection{\centering Radial cut-off functions}\label{subsec:4.2}

\noindent
We first describe the important concept of this paper: radial cut-off  function, and then we will discuss the decomposition of the boundary layer $O_\varepsilon$ with details.

\begin{lemma}[radial cut-off functions]\label{lemma:cut-off}
Let $\Omega\subset\mathbb{R}^d$ be a bounded $C^2$ domain. Then, for any $0<\varepsilon\ll 1$,
there exists a cut-off function $\psi_{\varepsilon}$ satisfying
the following properties:
\emph{\begin{itemize}
\item[\text{(a).}] $\psi_{\varepsilon}\in C^\infty_0(\Omega)$
with supp$(\psi_{\varepsilon})=\Sigma_{\varepsilon}$,
$\psi_{\varepsilon}=1$ on $\Sigma_{2\varepsilon}$ and
$0\leq \psi_{\varepsilon}\leq 1$ (where $\Sigma_\varepsilon$ is defined in Subsection $\ref{notation}$);
\item[\text{(b).}] For any $x$ with $\text{dist}(x,\partial\Omega)<10\varepsilon$, there exists
a unique element $\tilde{x}\in\partial\Omega$ such that dist$(x,\partial\Omega)=|x-\tilde{x}|$ and $\tilde{x}\in \partial\Omega$, and there also holds
\begin{equation}\label{eq:5.1}
\nabla \psi_{\varepsilon}(x)=-|\nabla \psi_{\varepsilon}(x)|\vec{n}(\tilde{x})
\quad \text{on}~O_\varepsilon,
\end{equation}
where $O_{\varepsilon}:=$supp$(\nabla\psi_{\varepsilon})\subset \Omega\setminus\Sigma_{2\varepsilon}$; Also, we have
$|\nabla\psi_{\varepsilon}(x)|=|\nabla\psi_{\varepsilon}(y)|$, provided
dist$(x,\partial\Omega)=$dist$(y,\partial\Omega)$ for $x,y\in O_{\varepsilon}$;
\item[\text{(c).}] $|\nabla\psi_{\varepsilon}|
    \lesssim\frac{1}{\varepsilon}$ and $|\nabla^2\psi_{\varepsilon}|\lesssim
    \frac{1}{\varepsilon^2}$.
\end{itemize}}
\end{lemma}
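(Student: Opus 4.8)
The plan is to build $\psi_\varepsilon$ as a fixed one–dimensional profile composed with the distance-to-the-boundary function, so that all three items become routine consequences of the chain rule together with the classical regularity theory for distance functions. Write $d(x):=\operatorname{dist}(x,\partial\Omega)$ for $x\in\overline\Omega$. Because $\partial\Omega$ is smooth and compact (cf.\ the footnote to the statement; what is actually used is that $d$ inherits the regularity of $\partial\Omega$ in a one–sided tubular neighborhood), there is a reach $\delta_0=\delta_0(\Omega)>0$ such that on $\mathcal N_{\delta_0}:=\{x\in\Omega:\ d(x)<\delta_0\}$ the nearest-point projection $x\mapsto\tilde x\in\partial\Omega$ is single-valued, $d\in C^\infty(\mathcal N_{\delta_0})$, and there hold the eikonal identity and curvature bound
\[
 |\nabla d|\equiv 1,\qquad \nabla d(x)=-\vec n(\tilde x),\qquad \|\nabla^2 d\|_{L^\infty(\mathcal N_{\delta_0})}\le C(\Omega).
\]
Fix once and for all a nondecreasing $\theta\in C^\infty(\mathbb R;[0,1])$ with $\theta\equiv0$ on $(-\infty,1]$ and $\theta\equiv1$ on $[\tfrac32,\infty)$, and define, for every $\varepsilon$ small enough that $3\varepsilon<\delta_0$,
\[
 \psi_\varepsilon(x):=\theta\!\big(d(x)/\varepsilon\big),\qquad x\in\Omega.
\]

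Next I would verify (a)–(c) directly. On $\{d\ge\tfrac32\varepsilon\}$ one has $\psi_\varepsilon\equiv1$ (hence smooth there and $\equiv1$ on $\Sigma_{2\varepsilon}$), on $\{d\le\varepsilon\}$ one has $\psi_\varepsilon\equiv0$, and on the shell $\{\varepsilon<d<\tfrac32\varepsilon\}\subset\mathcal N_{\delta_0}$ the composition $\theta(d/\varepsilon)$ is smooth; therefore $\psi_\varepsilon\in C^\infty_0(\Omega)$, $0\le\psi_\varepsilon\le1$, and $\operatorname{supp}(\psi_\varepsilon)=\overline{\{d>\varepsilon\}}=\Sigma_\varepsilon$, which is (a). For (b), the chain rule gives $\nabla\psi_\varepsilon(x)=\varepsilon^{-1}\theta'(d(x)/\varepsilon)\,\nabla d(x)$, so $O_\varepsilon=\operatorname{supp}(\nabla\psi_\varepsilon)\subset\{\varepsilon\le d\le\tfrac32\varepsilon\}\subset\Omega\setminus\Sigma_{2\varepsilon}$; in particular $d<10\varepsilon$ on $O_\varepsilon$, the nearest point $\tilde x$ is well defined there, and, using $|\nabla d|=1$, $\nabla d=-\vec n(\tilde x)$ and $\theta'\ge0$,
\[
 \nabla\psi_\varepsilon(x)=\varepsilon^{-1}\theta'\big(d(x)/\varepsilon\big)\,\big(-\vec n(\tilde x)\big)=-|\nabla\psi_\varepsilon(x)|\,\vec n(\tilde x),
\]
while $|\nabla\psi_\varepsilon(x)|=\varepsilon^{-1}\theta'(d(x)/\varepsilon)$ depends on $x$ only through $d(x)$, giving the stated equality of the moduli on level sets of $d$. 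Finally (c) follows from $|\nabla\psi_\varepsilon|\le\varepsilon^{-1}\|\theta'\|_{L^\infty}\lesssim\varepsilon^{-1}$ and from
\[
 \nabla^2\psi_\varepsilon=\varepsilon^{-2}\,\theta''(d/\varepsilon)\,\nabla d\otimes\nabla d+\varepsilon^{-1}\,\theta'(d/\varepsilon)\,\nabla^2 d,
\]
whence $|\nabla^2\psi_\varepsilon|\le\varepsilon^{-2}\|\theta''\|_{L^\infty}+\varepsilon^{-1}\|\theta'\|_{L^\infty}\,\|\nabla^2 d\|_{L^\infty(\mathcal N_{\delta_0})}\lesssim\varepsilon^{-2}$, the multiplicative constants being independent of $\varepsilon$ since $\theta$ is fixed and $d$ does not depend on $\varepsilon$.

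The only ingredient that is not pure bookkeeping is the regularity package for the distance function: smoothness of $d$ on a fixed one–sided tubular neighborhood of $\partial\Omega$, the eikonal identity $\nabla d=-\vec n(\tilde x)$, and the bound on $\nabla^2 d$, together with uniqueness of the nearest-point projection there. This is exactly where the ``regular'' hypothesis (and the smoothness of $\partial\Omega$) enters; it is the classical structure theorem for the distance function to a compact $C^2$ hypersurface, and once it is invoked the constraint ``$0<\varepsilon\ll1$'' in the statement is precisely the requirement $3\varepsilon<\delta_0$. I expect this to be the main (indeed the only) genuine point, all remaining steps being the elementary computation above.
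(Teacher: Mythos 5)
Your construction is essentially the same as the paper's: the paper also sets $\psi_\varepsilon(x)=g_\varepsilon(\mathrm{dist}(x,\partial\Omega))$ for a one-dimensional profile $g_\varepsilon$ (obtained there by mollifying a piecewise-linear ramp rather than rescaling a fixed $\theta$) and then derives (a)--(c) from the chain rule, the eikonal identity $\nabla d=-\vec n(\tilde x)$, and the regularity of the distance function on a tubular neighborhood. Your argument is correct, and in fact slightly more careful on item (c), where you retain the $\varepsilon^{-1}\theta'(d/\varepsilon)\nabla^2 d$ term in the Hessian that the paper's displayed formula omits.
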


\begin{proof}
We initially construct the radial function as follows:
\begin{equation*}
\phi_\varepsilon(r) =\left\{\begin{aligned}
&0, &&0\leq r\leq (4/3)\varepsilon;\\
&\frac{3}{\varepsilon}\big(r-\frac{4}{3}\varepsilon\big),
&&(4/3)\varepsilon< r\leq (5/3)\varepsilon;\\
&1, &&r>(5/3)\varepsilon.
\end{aligned}\right.
\end{equation*}
Let $g_\varepsilon(r):={\varrho}_{\frac{\varepsilon}{3}}\ast\phi_\varepsilon$,
where the kernel $\varrho\in C^\infty(\mathbb{R})$ is the 1-dimensional standard
mollifier\footnote{The mollifier $\varrho(r):=C\exp\{-\frac{1}{1-r^2}\}$
for $r<1$, and vanishes
for $r\geq 1$, in which the constant $C$ is such that
$\int\varrho = 1$.}. Thus, we have $g_\varepsilon\in C^\infty(\mathbb{R})$, and
$0\leq g_\varepsilon \leq 1$ with $g_\varepsilon(r) = 0$ if $r\leq \varepsilon$;
and $g_\varepsilon(r)=1$ if $r>2\varepsilon$. Also, we have
$|\frac{d^kg_\varepsilon}{dr^k}|\lesssim 1/\varepsilon^k$ for any positive integer $k$.
Now, we introduce the distance function $\delta(x) :=\text{dist}(x,\partial\Omega)$
and then the desired radial cut-off function
can be defined by $\psi_\varepsilon(x) := g_\varepsilon(\delta(x))$ on $\Omega$.
It is not hard to see that $\text{supp}(\psi_\varepsilon) = \Sigma_\varepsilon$,
$\psi_\varepsilon \equiv 1$ on $\Sigma_{2\varepsilon}$, and
$0\leq \psi_\varepsilon\leq 1$. Also, we denote the compact support of
$\nabla\psi_\varepsilon$ by $O_\varepsilon$, which is included in
the layer type region $\Sigma_{2\varepsilon}\setminus\Sigma_\varepsilon$.
Since $\Omega$ is assumed to be a bounded $C^2$ domain
and $0<\varepsilon\ll 1$, for any $x\in O_\varepsilon$
there exists a unique point $\tilde{x}\in\partial\Omega$ such that $\delta(x)
=|x-\tilde{x}|$. Thus, we can find that
\begin{equation}\label{f:4.7}
  \nabla\psi_\varepsilon(x) = g_\varepsilon'(\delta(x))\nabla\delta(x)
  = -{g}_\varepsilon'(\delta(x))\vec{n}(\tilde{x})
\end{equation}
for any $x\in O_\varepsilon$, where $g_\varepsilon'$ represents the derivative of $g_\varepsilon$.
This further implies
\begin{equation}\label{f:4.8}
 |g_\varepsilon'(\delta(x))| =|\nabla\psi_\varepsilon(x)|
 \quad \text{and} \quad
 |\nabla\psi_\varepsilon(x)| = |g_\varepsilon'(\delta(x))|
 = |g_\varepsilon'(\delta(y))|
 = |\nabla\psi_\varepsilon(y)|,
\end{equation}
whenever $\delta(x) = \delta(y)$. Combining the equalities
$\eqref{f:4.7}$ and $\eqref{f:4.8}$ leads to the stated conclusions
in (b). Moreover, it is clear to see $|\nabla\psi_\varepsilon|\lesssim 1/\varepsilon$,
and there holds
\begin{equation*}
 \nabla^2\psi_\varepsilon
 = g_\varepsilon''(\delta)\nabla\delta \otimes\nabla\delta
 \quad\text{in}\quad O_\varepsilon,
\end{equation*}
where $g_\varepsilon''$ represents the second-order derivative of $g_\varepsilon$, and this gives us $|\nabla^2\psi_\varepsilon(x)| \leq |g_\varepsilon''(\delta(x))|
 \lesssim 1/\varepsilon^2$. We have completed the proof.
\end{proof}

\begin{remark}\label{remark:4.1}
\emph{As mentioned in Proposition $\ref{subsec:1.2}$,
$\{O_{\varepsilon}^i\}$ is a family of the non-overlap subsets of $O_\varepsilon$, such that
\begin{equation*}
\begin{aligned}
&O_\varepsilon = \bigcup_{i}O_\varepsilon^{i}
\quad\text{and}\quad
|O_\varepsilon^i| \sim \varepsilon^d,
\end{aligned}
\end{equation*}
in which $O_\varepsilon^{i}$ is an approximately $d$-dimensional cube obtained by cutting $O_\varepsilon$ in the normal direction $\vec{n}$ associated with $\partial\Omega$ (see Figure $\ref{subfig_1}$ for an example). The boundary of $O_\varepsilon^i$ consists of two parts:
\begin{itemize}
  \item[(a).] $\{R_j^i\}:= \partial O_\varepsilon^i\cap \partial O_\varepsilon$. Let $\vec{n}_R$ be the unit normal vector of $R_j^i$, satisfying $\vec{n}_R \perp R_j^i$. Then, there holds
      \begin{equation*}
         \vec{n}_R \pll \vec{n} \quad\text{on}\quad R_j^i.
      \end{equation*}
  \item[(b).]
$\{S_j^i\}:= \partial O_\varepsilon^i\cap O_\varepsilon$. Let $\vec{n}_S$ be the unit normal vector of $S_j^i$,
satisfying $\vec{n}_S \perp S_j^i$. Due to $S_j^i$ obtained by the cutting in the normal direction $\vec{n}$, we have the following relationship:
\begin{equation}\label{k-6}
 \vec{n}_S \perp \vec{n} \quad\text{on}\quad S_j^i.
\end{equation}
\end{itemize}}
\end{remark}


\begin{figure}
  \centering
  \includegraphics[width=0.65\textwidth]{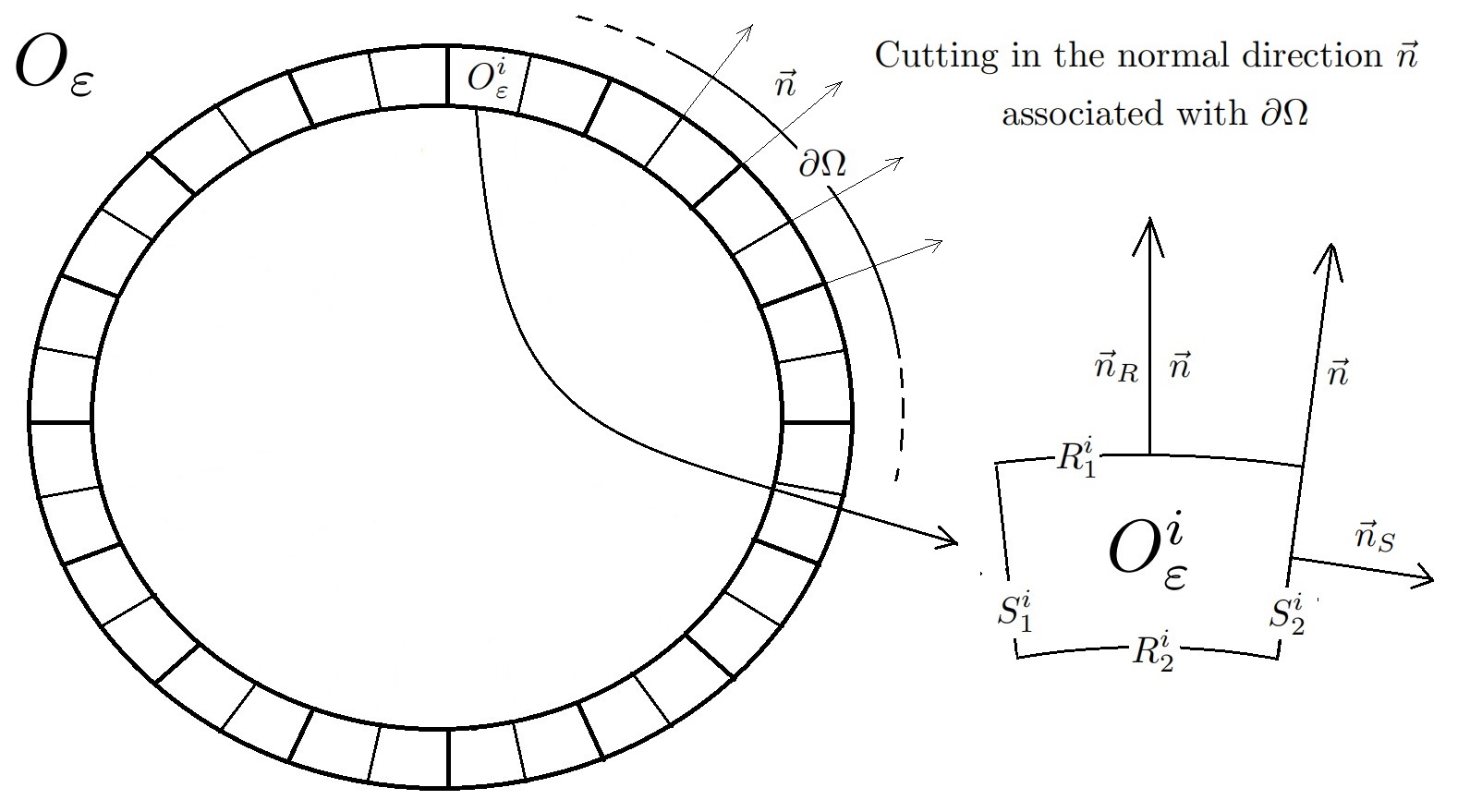}
  \caption{\small A typical example for $d=2$}\label{subfig_1}
\end{figure}

\subsection{\centering Proof of Proposition $\ref{lemma3.6-1}$}\label{subsec:4.3}

\begin{lemma}[estimates for $J_2$]\label{lemma:5.1}
Let $\psi_\varepsilon$ be the radial cut-off function defined in
Lemma $\ref{lemma:cut-off}$, and $J_2$ be given
as in $\eqref{pde3.8}$. Assume the same conditions as in
Proposition $\ref{lemma3.6-1}$. Then,
the first term in the right-hand side of $\eqref{divc2}$ satisfies the following estimate
\begin{equation}\label{pri:5.1}
\begin{aligned}
\bigg(
\int_{0}^{T}dt\int_{\Omega_{\varepsilon}}
&\big|\frac{\partial J_2}{\partial t}\big|^2
\bigg)^{1/2}
\lesssim
\varepsilon^{\frac{1}{2}}
\Big(\|\partial_t \phi\|_{L^1(0,T;L^2(Y))}
+\|\phi(\cdot,0)\|_{L^2(Y_f)}
\Big)
\|(\nabla G,\varepsilon^{\frac{1}{2}}\nabla^2G)\|_{L^2(0,T;
L^{\infty}(\text{supp}(\psi_{\varepsilon})))}\\
&+
\Big(\|\partial_tA\|_{L^1(0,T)}+|A(0)|\Big)
\bigg\{\varepsilon^{\frac{1}{2}}\|\nabla F\|_{L^2(0,T;C(\bar{\Omega}))}
+\|\nabla(G-F)\|_{
L^2(0,T;L^2{(\text{supp}(\psi_{\varepsilon}))})}\bigg\},\\
\end{aligned}
\end{equation}
where the multiplicative constant depends on
$d$, $|Y_f|$, and the character of $\Omega$,
but independent of $\varepsilon$.
\end{lemma}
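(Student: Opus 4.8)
The plan is to differentiate $J_2$ from $\eqref{pde3.8}$ in time, using the convolution rule $\partial_t(a\ast b)(t)=a(0)b(t)+(a'\ast b)(t)$, and to estimate the resulting terms one family at a time. Writing $\partial_t J_2$ out yields eight terms, which split naturally into those carrying the factor $\nabla\psi_{\varepsilon}$ — hence supported on $O_{\varepsilon}$, a boundary layer of width $\sim\varepsilon$, so that $|O_{\varepsilon}\cap\Omega_{\varepsilon}|\lesssim\varepsilon$ and $|\nabla\psi_{\varepsilon}|\lesssim\varepsilon^{-1}$ — and those carrying the factor $\psi_{\varepsilon}$, supported on $\mathrm{supp}\,\psi_{\varepsilon}$. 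I would treat the four terms involving the corrector of Bogovskii's operator first, then the four terms involving the permeability tensor $A$.

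For the $\phi^{\varepsilon}$-terms $\varepsilon\nabla\psi_{\varepsilon}\cdot(\phi^{\varepsilon}(\cdot,0):\nabla G(\cdot,t))$, $\varepsilon\nabla\psi_{\varepsilon}\cdot(\partial_t\phi^{\varepsilon}\ast_{2}\partial G)(t)$, $\varepsilon\psi_{\varepsilon}\,\phi^{\varepsilon}(\cdot,0):\nabla^{2}G(\cdot,t)$ and $\varepsilon\psi_{\varepsilon}(\partial_t\phi^{\varepsilon}\ast_{3}\partial^{2}G)(t)$, I would factor out $\|\nabla G(\cdot,t)\|_{L^{\infty}(\Omega)}$ or $\|\nabla^{2}G(\cdot,t)\|_{L^{\infty}(\Omega)}$, use $|\nabla\psi_{\varepsilon}|\lesssim\varepsilon^{-1}$, and reduce the remaining spatial integral of $\phi^{\varepsilon}$ to the cell by the change of variables $y=x/\varepsilon$ and periodicity of $\phi$: this gives $\|\phi^{\varepsilon}(\cdot,0)\|_{L^{2}(O_{\varepsilon})}\lesssim\varepsilon^{1/2}\|\phi(\cdot,0)\|_{L^{2}(Y_{f})}$, $\|\partial_t\phi^{\varepsilon}(\cdot,s)\|_{L^{2}(O_{\varepsilon})}\lesssim\varepsilon^{1/2}\|\partial_t\phi(\cdot,s)\|_{L^{2}(Y_{f})}$, and the analogues over $\mathrm{supp}\,\psi_{\varepsilon}$ with $\varepsilon^{1/2}$ replaced by $O(1)$. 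The time-convolutions are then absorbed by Young's inequality $\|h_1\ast h_2\|_{L^{2}(0,T)}\le\|h_1\|_{L^{1}(0,T)}\|h_2\|_{L^{2}(0,T)}$, and after collecting we obtain precisely the first line of $\eqref{pri:5.1}$ (the $\nabla\psi_{\varepsilon}$-terms yielding the $\varepsilon^{1/2}\|\nabla G\|$-contribution and the $\psi_{\varepsilon}$-terms the $\varepsilon\|\nabla^{2}G\|$-contribution).

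The four remaining terms come from $\nabla\psi_{\varepsilon}\cdot\partial_t(A\ast G)(t)$ and $\psi_{\varepsilon}|Y_f|^{-1}\partial_t(A\ast_{2}\partial G)(t)$, and are the delicate ones, since a naive use of $|\nabla\psi_{\varepsilon}|\lesssim\varepsilon^{-1}$ on a layer of measure $\varepsilon$ loses a factor $\varepsilon^{1/2}$. For the $\psi_{\varepsilon}$-term I would invoke the divergence-free identity $A\ast_{2}\partial F=\nabla\cdot u_{0}=0$ from $\eqref{pde1.2}$; by linearity $A\ast_{2}\partial G=A\ast_{2}\partial(G-F)$, so $\partial_t(A\ast_{2}\partial G)(t)=A(0):\nabla(G-F)(\cdot,t)+(A'\ast_{2}\partial(G-F))(t)$, and bounding in $L^{2}(\mathrm{supp}\,\psi_{\varepsilon})$ followed by Young's inequality in time gives the contribution $(\|A'\|_{L^{1}(0,T)}+|A(0)|)\|\nabla(G-F)\|_{L^{2}(0,T;L^{2}(\mathrm{supp}\,\psi_{\varepsilon}))}$. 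For the $\nabla\psi_{\varepsilon}$-term I would split $G=F+(G-F)$: on the $F$-part we recognize $\nabla\psi_{\varepsilon}\cdot\partial_t u_{0}$, and since $u_{0}\cdot\vec{n}=0$ on $\partial\Omega$ forces $\partial_t u_{0}\cdot\vec{n}=0$ on $\partial\Omega$, while $\partial_t u_{0}=A(0)F+A'\ast F$ is Lipschitz in space with $\|\nabla\partial_t u_{0}(\cdot,t)\|_{L^{\infty}}\lesssim|A(0)|\,\|\nabla F(\cdot,t)\|_{C^{0}}+(|A'|\ast\|\nabla F\|_{C^{0}})(t)$ by Proposition $\ref{P:1}$, the structural property $\nabla\psi_{\varepsilon}(x)=-|\nabla\psi_{\varepsilon}(x)|\vec{n}(\tilde{x})$ on $O_{\varepsilon}$ together with $|x-\tilde{x}|\,|\nabla\psi_{\varepsilon}(x)|\lesssim1$ (Lemma $\ref{lemma:cut-off}$) yields the pointwise bound $|\nabla\psi_{\varepsilon}(x)\cdot\partial_t u_{0}(x,t)|=|\nabla\psi_{\varepsilon}(x)|\,|\vec{n}(\tilde{x})\cdot(\partial_t u_{0}(x,t)-\partial_t u_{0}(\tilde{x},t))|\lesssim\|\nabla\partial_t u_{0}(\cdot,t)\|_{L^{\infty}}$; combined with $|O_{\varepsilon}|\lesssim\varepsilon$ and Young's inequality this gives the term $(\|A'\|_{L^{1}(0,T)}+|A(0)|)\varepsilon^{1/2}\|\nabla F\|_{L^{2}(0,T;C^{0}(\bar{\Omega}))}$. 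On the $(G-F)$-part one uses that $\varphi_{\varepsilon}\equiv1$ on a $\tfrac{\varepsilon}{2}$-neighbourhood of $O_{\varepsilon}\subset\Sigma_{\varepsilon}\setminus\Sigma_{2\varepsilon}$, so there $G=S_{\varepsilon/2}(F)$ and $|G-F|\lesssim\varepsilon\|\nabla F\|_{C^{0}}$ pointwise; with $|\nabla\psi_{\varepsilon}|\lesssim\varepsilon^{-1}$, $|O_{\varepsilon}|\lesssim\varepsilon$ and Young this contributes again a multiple of the same quantity.

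Adding the three families of estimates yields $\eqref{pri:5.1}$. The main obstacle is the $A$-term carrying $\nabla\psi_{\varepsilon}$: there the gradient of the cut-off has size $\varepsilon^{-1}$ on a set of measure $\varepsilon$, so a crude estimate diverges like $\varepsilon^{-1/2}$, and the cancellation is obtained only by exploiting simultaneously the radial geometry of $\psi_{\varepsilon}$, the boundary condition $\partial_t u_{0}\cdot\vec{n}=0$ on $\partial\Omega$, and the spatial $C^{1}$-regularity of $u_{0}$ guaranteed by Proposition $\ref{P:1}$; a secondary bookkeeping point is to keep the mollification radius small enough that $\varphi_{\varepsilon}\equiv1$ near $O_{\varepsilon}$, so that $|G-F|$ is of order $\varepsilon$ on that layer.
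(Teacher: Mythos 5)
Your proposal is correct and follows essentially the same route as the paper: the same splitting of $\partial_t J_2$ into the $\phi$-terms (handled by periodic rescaling plus Young's inequality in time), the co-layer $A$-term (killed via $A\ast_2\partial F=\nabla\cdot u_0=0$), and the layer $A$-term, where you exploit exactly the paper's cancellation $\nabla\psi_\varepsilon=-|\nabla\psi_\varepsilon|\vec n(\tilde x)$ combined with $\vec n\cdot\partial_t u_0=0$ on $\partial\Omega$ and the $C^1$-regularity of $p_0$, together with $|G-F|\lesssim\varepsilon\|\nabla F\|_{C^0}$ near the layer. The only cosmetic difference is that you phrase the normal-direction cancellation as a pointwise mean-value bound while the paper writes it as a line integral from $\tilde x$ to $x$; these are the same estimate.
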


\begin{proof}
By virtue of the equality $\eqref{eq:5.1}$, together with
the structure of the effective solution, we can find a cancellation near the boundary with respect to
outward normal direction, which further provides us with the desired smallness.
For the sake of the convenience, we recall the expression of $J_2$, as well as, the convention in $\eqref{notation:4.2}$,
and denote it by
\begin{equation}\label{f:5.11}
\begin{aligned}
  J_2&=\nabla \psi_{\varepsilon}
  \cdot (A\ast G)
  +\varepsilon\nabla \psi_{\varepsilon}\cdot
  \big(\phi_{\varepsilon}\ast_2 \partial G\big)
  +\psi_{\varepsilon}\frac{A}{|Y_f|}\ast_2 \partial G
  +\varepsilon\psi_{\varepsilon}\phi_{\varepsilon}
  \ast_3\partial^2G\\
  &:=J_{21}+J_{22}+J_{23}+J_{24}.
  \end{aligned}
\end{equation}
We complete the whole arguments by three steps, according to the similarity of the computations.

\textbf{Step 1}. We start from the term $\frac{\partial J_{21}}{\partial t}$,
and it follows from the equality $\eqref{eq:5.1}$ that
\begin{equation}\label{f:5.9}
\begin{aligned}
\frac{\partial J_{21}}{\partial t}
= \nabla \psi_{\varepsilon}\cdot \frac{\partial }{\partial t}( A\ast G)
&= -|\nabla\psi_\varepsilon|\vec{n}\cdot \frac{\partial }{\partial t}( A\ast G)\\
&=-|\nabla\psi_\varepsilon|\vec{n}\cdot \frac{\partial }{\partial t}\big[A\ast (G-F)\big] -|\nabla\psi_\varepsilon|\vec{n}\cdot \frac{\partial }{\partial t}( A\ast F).
\end{aligned}
\end{equation}
By noting the fact that $|\nabla\psi_{\varepsilon}|\lesssim
\varepsilon^{-1}$ (see Lemma $\ref{lemma:cut-off}$), there holds
\begin{equation}\label{f:5.10}
\begin{aligned}
\int_{0}^{T}dt\int_{\Omega_{\varepsilon}}\Big|\frac{\partial J_{21}}{\partial t}\Big|^2
&\overset{\eqref{f:5.9}}{\lesssim} \varepsilon^{-2}
\int_{0}^{T}dt\int_{O_{\varepsilon}}
|\vec{n}\cdot\frac{\partial }{\partial t}( A\ast F)|^2
+\varepsilon^{-2}
\int_{0}^{T}dt\int_{O_{\varepsilon}}
|\vec{n}\cdot\frac{\partial }{\partial t}[ A\ast (G-F)]|^2.
\end{aligned}
\end{equation}
To handle the first term in the right-hand side above,
recalling the effective equations $\eqref{pde1.2}$, we have
$u_0=A\ast F$ with $\vec{n}\cdot u_0=0$ on the boundary $\partial\Omega\times(0,T)$,
whereupon for any $x\in O_\varepsilon$ and $t\geq 0$ there holds
\begin{equation*}
\vec{n}\cdot
\frac{\partial}{\partial t}\big(A\ast F\big)(x,t)
=\vec{n}\cdot \int_{0}^{1}ds\nabla
\frac{\partial}{\partial t}\big(A\ast F\big)(\tilde{x}+s(x-\tilde{x}),t)
\cdot(x-\tilde{x}),
\end{equation*}
where $\tilde{x}\in\partial\Omega$ is such that $\text{dist}(x,\partial\Omega)
=|x-\tilde{x}|$,
which further implies
\begin{equation*}
\int_{O_\varepsilon}\Big|\vec{n}\cdot \frac{\partial}{\partial t}
\big(A\ast F\big)(\cdot,t)\Big|^2
\lesssim \varepsilon^2
\int_{O_\varepsilon}\int_{0}^{1}ds
\big|\nabla \frac{\partial}{\partial t}\big(A\ast F\big)
(\tilde{x}+s(\cdot
-\tilde{x}),t)\big|^2
\leq \varepsilon^2\int_{\Omega\setminus \Sigma_{2\varepsilon}}
|\nabla \frac{\partial}{\partial t}\big(A\ast F\big)(\cdot,t)|^2.
\end{equation*}
Plugging this back into $\eqref{f:5.10}$, and using
Young's inequality,  we obtain that
\begin{equation*}
\begin{aligned}
&\quad\int_{0}^{T}dt\int_{\Omega_{\varepsilon}}\Big|\frac{\partial J_{21}}{\partial t}\Big|^2
\lesssim
\int_{0}^{T}dt
\int_{\Omega}
|\nabla\frac{\partial }{\partial t}( A\ast F)|^2
+\varepsilon^{-2}
\int_{0}^{T}dt
\int_{O_{\varepsilon}}
\big(|(\partial_tA)\ast (G-F)|^2+|A(0)(G-F)|^2\big)\\
&\lesssim
\int_{0}^{T}dt\int_{\Omega\setminus\Sigma_{2\varepsilon}}
\big(|(\partial_tA)\ast \nabla F|^2+|A(0)\nabla F|^2\big)+\varepsilon^{-2}
\int_{0}^{T}dt\int_{O_{\varepsilon}}
\big(|(\partial_tA)\ast (G-F)|^2+|A(0)(G-F)|^2\big)\\
&\lesssim
\big(\|\partial_tA\|_{L^1(0,T)}^2+|A(0)|^2\big)
\Big\{\varepsilon\|\nabla F\|_{L^2(0,T;C(\bar{\Omega}))}^2
+ \varepsilon^{-1}\|G-F\|_{L^2(0,T;L^\infty(\text{supp}
(\psi_{\varepsilon})))}^2 \Big\}.
\end{aligned}
\end{equation*}
This, together with the estimate (Recall the notations $G,F$ in Subsection $\ref{notation}$.)
\begin{equation*}
\|G(\cdot,t)
-F(\cdot,t)\|_{L^\infty(\text{supp}
(\psi_{\varepsilon}))}\lesssim\varepsilon
\|\nabla F(\cdot,t)\|_{C(\bar\Omega)},
\end{equation*}
immediately yields
\begin{equation}\label{*xi2}
\int_{0}^{T}dt\int_{\Omega_{\varepsilon}}\Big|\frac{\partial J_{21}}{\partial t}\Big|^2
\lesssim \varepsilon
\big(\|\partial_tA\|_{L^1(0,T)}^2+|A(0)|^2\big)
\|\nabla F\|_{L^2(0,T;C(\bar{\Omega}))}^2.
\end{equation}

\textbf{Step 2}. We turn to study the term $J_{23}$.
Appealing to
the fact that $\nabla\cdot u_0=A\ast_2\partial F=0$ in $\Omega\times(0,T)$,
there holds
\begin{equation*}
\begin{aligned}
  \int_{0}^{T}dt\int_{\Omega_{\varepsilon}}\Big|
  \frac{\partial J_{23}}{\partial t}
  \Big|^2
  &=
  \int_{0}^{T}dt\int_{\Omega_{\varepsilon}}\Big|
  \frac{\psi_{\varepsilon}}{|Y_f|}
  \frac{\partial}{\partial t}(A\ast_2\partial G)
  \Big|^2
  =\int_{0}^{T}dt\int_{\Omega_{\varepsilon}}\Big|
  \frac{\psi_{\varepsilon}}{|Y_f|}
  \frac{\partial}{\partial t}[A\ast_2\partial (G-F)]\Big|^2\\
  &=\int_{0}^{T}dt\int_{\Omega_{\varepsilon}}\Big|
  \frac{\psi_{\varepsilon}}{|Y_f|}
  \Big(\big[(\partial_tA)\ast_2\partial (G-F)\big]
  +A(0):\partial (G-F)\Big)\Big|^2.
\end{aligned}
\end{equation*}
By using Minkowski's inequality and Young's inequality, the above equality
gives us
\begin{equation}\label{*xi3}
\int_{0}^{T}dt\int_{\Omega_{\varepsilon}}\Big|
\frac{\partial J_{23}}{\partial t}
\Big|^2\lesssim
\big(\|\partial_tA\|^2_{L^1(0,T)}+|A(0)|^2\big)
\|\nabla(G-F)\|^2_{
L^2(0,T;L^2{(\text{supp}(\psi_{\varepsilon}))})}.
\end{equation}

\textbf{Step 3}.
Due to the analogous computations, we consider the terms $J_{22}$ and $J_{24}$ together, and start from
the following estimate:
\begin{equation*}
  \begin{aligned}
  \int_{0}^{T}dt\int_{\Omega_{\varepsilon}}
  \bigg|\frac{\partial J_{22}}{\partial t}+\frac{\partial J_{24}}{\partial t}\bigg|^2
  &\lesssim
  \int_{0}^{T}\int_{\Omega_{\varepsilon}}
  \bigg|\varepsilon\nabla\psi_{\varepsilon}(x)\cdot
  \frac{\partial}{\partial t}(\phi_{\varepsilon}\ast_2\partial G)\bigg|^2 +
  \int_{0}^{T}\int_{\Omega_{\varepsilon}}
 \bigg|\varepsilon\psi_{\varepsilon}\frac{\partial}
  {\partial t}(\phi_{\varepsilon}\ast_3\partial^2 G)\bigg|^2 \\
  &\lesssim
  \int_{0}^{T}\int_{O_{\varepsilon}}
  \big(|(\partial_t\phi_{\varepsilon})\ast_2\partial G|^2
  +|\phi_{\varepsilon}(\cdot,0):\partial G|^2\big) \\
  &\qquad+\varepsilon^2
  \int_{0}^{T}\int_{\text{supp}(\psi_{\varepsilon})}
  \Big(|(\partial_t\phi_{\varepsilon})\ast_3\partial^2G|^2
  +|\phi_{\varepsilon}(\cdot,0)|^2|\nabla^2G|^2\Big).
  \end{aligned}
\end{equation*}
Using Minkowski's inequality and Young's inequality, we have
\begin{equation*}
\begin{aligned}
\int_{0}^{T}dt\int_{O_{\varepsilon}}
  \Big(|(\partial_t\phi_{\varepsilon})\ast_2\partial G(\cdot,t)|^2
&  +|\phi_{\varepsilon}(\cdot,0):\partial G(\cdot,t)|^2\Big)\\
&\lesssim
\varepsilon\Big\{\|\partial_t\phi\|^2_{
L^1(0,T;L^2(Y_f))}
+\|\phi(\cdot,0)\|^2_{L^2(Y_f)}\Big\}
\|\nabla G\|^2_{L^2(0,T;L^{\infty}
(\text{supp}(\psi_{\varepsilon})))},
\end{aligned}
\end{equation*}
and it similarly follows that
\begin{equation*}
\begin{aligned}
\int_{0}^{T}dt\int_{\text{supp}(\psi_{\varepsilon})}
  \Big(|(\partial_t\phi_{\varepsilon})\ast_3\partial^2
  &G(\cdot,t)|^2
  +|\phi_{\varepsilon}(\cdot,0)|^2|\nabla^2G(\cdot,t)|^2\Big)\\
&\lesssim
\varepsilon^2
\Big\{\|\partial_t\phi\|^2_{L^1(0,T;L^2(Y_f))}
+\|\phi(\cdot,0)
\|^2_{L^2(Y_f)}\Big\}
\|\nabla^2G\|^2_{L^2(0,T;L^{\infty}
(\text{supp}(\psi_{\varepsilon})))}.
\end{aligned}
\end{equation*}
Thus, combining the three estimates above, we derive that
\begin{equation}\label{*xi1}
\begin{aligned}
&\quad\int_{0}^{T}dt\int_{\Omega_{\varepsilon}}\Big|\frac{\partial J_{22}}{\partial t}
+\frac{\partial J_{24}}{\partial t}\Big|^2 \\
&\lesssim
\varepsilon\Big(\|\partial_t\phi\|^2_{L^1(0,T;L^2(Y_f))}
+\|\phi(\cdot,0)\|^2_{L^2(Y_f)}\Big)
\|(\nabla G,\varepsilon^{\frac{1}{2}}\nabla^2G)\|^2_{L^2(0,T;L^{\infty}
(\text{supp}(\psi_{\varepsilon})))}.
  \end{aligned}
\end{equation}

Consequently, the desired estimate $\eqref{pri:5.1}$
follows from the estimates $\eqref{f:5.11}$, $\eqref{*xi2}$, $\eqref{*xi3}$,
and $\eqref{*xi1}$. This ends the proof.
\end{proof}

\medskip

\begin{lemma}[estimates for $J_1$]\label{*lemma4.1}
Let $J_1$ be given as in $\eqref{pde3.8}$. Assume the same conditions as in
Proposition $\ref{lemma3.6-1}$. Then, there holds
the estimate
\begin{equation}\label{pri:5.0}
\begin{aligned}
\bigg(\int_{0}^{T}dt
&\int_{\Omega_{\varepsilon}}\Big|
\sum_{i}(\dashint_{O_{\varepsilon}^i}\frac{\partial J_1}{\partial t}) 1_{O_{\varepsilon}^i}\Big|^2
\bigg)^{1/2} \\
\lesssim
&\varepsilon^{\frac{1}{2}}
\bigg\{\|\partial_t \Phi\|_{L^1(0,T;L^{\infty}(Y))}+
\|\Phi(\cdot,0)\|_{L^{\infty}(Y)}
\bigg\}\|\nabla G\|_{L^2(0,T;L^{\infty}
(O_\varepsilon))},
\end{aligned}
\end{equation}
where the multiplicative constant depends on $d$ and the character of $\Omega$, but independent of $\varepsilon$.
\end{lemma}

\begin{proof}
\textbf{Step 1.} Reduction. For a.e. $t\geq 0$,
we manage to establish the following uniformly
bounded estimate with respect to the index $i$, i.e.,
\begin{equation}\label{*pde4.8}
\begin{aligned}
\bigg|\dashint_{O_{\varepsilon}^i}\frac{\partial J_1}{\partial t}(\cdot,t)\bigg|
&\lesssim
\underbrace{\int_{0}^{t}ds\|\partial_t
\Phi(\cdot,t-s)\|_{L^{\infty}(Y)}\|\nabla G(\cdot,s)\|_{L^{\infty}
(O_\varepsilon)}
+\|\Phi(\cdot,0)\|_{L^{\infty}(Y)}\|\nabla G(\cdot,t)\|_{L^{\infty}
(O_\varepsilon)}}_{=:Q(t)}.
\end{aligned}
\end{equation}
Then, it is reduced to estimating the left-hand side of $\eqref{pri:5.0}$ below
\begin{equation*}
\begin{aligned}
&
\int_{0}^{T}dt\int_{\Omega_{\varepsilon}}\Big|
\sum_{i}(\dashint_{O_{\varepsilon}^i}\frac{\partial J_1}{\partial t}) 1_{O_{\varepsilon}^i}\Big|^2
\lesssim  \int_{0}^{T}dt |Q(t)|^2\int_{\Omega_{\varepsilon}}\Big|
\sum_{i} 1_{O_{\varepsilon}^i}
\Big|^2
\lesssim \varepsilon\int_{0}^{T}dt |Q(t)|^2\\
&\lesssim
\varepsilon\bigg\{
\|\partial_t\Phi\|_{L^1(0,T;L^\infty(Y))}^2
\|\nabla G\|_{L^2(0,T;L^\infty(O_\varepsilon))}^2
+
\|\Phi(\cdot,0)\|_{L^{\infty}(Y)}^2
\|\nabla G\|_{L^2(0,T;L^{\infty}(O_\varepsilon))}^2
\bigg\},
\end{aligned}
\end{equation*}
where we employ Young's inequality for the second line.
This gives the desired estimate $\eqref{pri:5.0}$.

\textbf{Step 2.} Arguments for $\eqref{*pde4.8}$.
On account of the antisymmetric property of flux corrector $\Phi$ in Proposition \ref{prop2.1},
integrating by parts we obtain that, for any $t\in(0,T]$,
\begin{equation}\label{*pde4.9}
\begin{aligned}
  \int_{O_{\varepsilon}^i}\frac{\partial J_1}{\partial t}(\cdot,t)
  &=\int_{O_{\varepsilon}^i}
  \frac{\partial}{\partial t}
  [(W^{\varepsilon}-A)\ast G](\cdot,t)\cdot\nabla \psi_{\varepsilon}
  =\int_{O_{\varepsilon}^i}
  \frac{\partial}{\partial t}
  \big[(\nabla \cdot\Phi)^\varepsilon\ast G\big](\cdot,t)\cdot\nabla \psi_{\varepsilon}\\
  &\overset{\eqref{anti-sym}}{=}
  -\varepsilon\int_{O_{\varepsilon}^i}
  \frac{\partial}{\partial t}\big[
  \Phi^\varepsilon\ast_2\partial G\big](\cdot,t)\cdot\nabla\psi_{\varepsilon}
  +\varepsilon
  \int_{\partial O_{\varepsilon}^i}dS\frac{\partial}{\partial t}
  \big[\Phi^\varepsilon\ast_3 (G\otimes\nabla\psi_{\varepsilon}\otimes
  \vec{n}_s)\big](\cdot,t)\\
  :&=E_{1}^i(t)+E_{2}^i(t),
  \end{aligned}
\end{equation}
in which $\vec{n}_s$ is the unit outward normal vector of the boundary $\partial O_{\varepsilon}^i$.

In fact, we can establish the following estimates:
\begin{subequations}
\begin{align}
&\big|E_1^i(t)\big|\lesssim |O_{\varepsilon}^i|
  Q(t); \label{*pde3.19} \\
& E_2^i(t) = 0 \quad (\text{uniformly~with~respect~to~the index}~i~\text{and}~ t\in(0,T]).
\label{pri:4.4}
\end{align}
\end{subequations}
Plugging the above two estimates back into
$\eqref{*pde4.9}$, we obtain the stated estimate
$\eqref{*pde4.8}$.

\textbf{Step 3.} Arguments for $E_1^i(t)$ in $\eqref{*pde3.19}$.
Recalling the properties of the cut-off
function $\psi_{\varepsilon}$ stated in Lemma $\ref{lemma:cut-off}$, a direct computation
leads to
\begin{equation*}
\begin{aligned}
 |E_{1}(t)|
 &\lesssim\int_{O_{\varepsilon}^i}\bigg|\frac
 {\partial}{\partial t}[\Phi^\varepsilon\ast_2\partial G](\cdot,t)\bigg|
 =\int_{O_{\varepsilon}^i}
 \bigg|(\partial_t\Phi^\varepsilon
 \ast_2\partial G)(\cdot,t)+\Phi^\varepsilon(\cdot,0):\partial G(\cdot,t)\bigg|\\
 &\lesssim \int_{O_{\varepsilon}^i}
 \big|\partial_t\Phi^\varepsilon\ast_2\partial G\big|(\cdot,t)
+ |O_{\varepsilon}^i|
  \|\Phi(\cdot,0)\|_{L^{\infty}(Y)}\|\nabla G(\cdot,t)\|_{L^{\infty}(O_\varepsilon)}.
 \end{aligned}
\end{equation*}
This together with the estimate
\begin{equation*}
\begin{aligned}
\int_{O_{\varepsilon}^i}
 \big|\partial_t\Phi^\varepsilon\ast_2\partial G\big|(\cdot,t)
\leq
\int_{0}^tds\int_{O_{\varepsilon}^i}
\big|\partial_s\Phi^\varepsilon(\cdot,s):\partial G(\cdot,t-s)\big|
\lesssim 
|O_{\varepsilon}^i|
\int_{0}^{t}ds\|\partial_t
\Phi(\cdot,t-s)\|_{L^{\infty}(Y)}\|\nabla G(\cdot,s)\|_{L^{\infty}
(O_\varepsilon)}
\end{aligned}
\end{equation*}
leads to the stated estimate $\eqref{*pde3.19}$,
where we recall the definition of the notation
presented in  $\eqref{notation:4.2}$.

\textbf{Step 4.} Arguments for $E_2^i(t)$ in $\eqref{pri:4.4}$.
According to Remark $\ref{remark:4.1}$,
we rewrite $E_{2}^i(t)$ as follows:
\begin{equation}\label{}
\begin{aligned}
E_{2}^i(t)
  =\frac{\partial}{\partial t}
  \int_{\partial O_{\varepsilon}^i\cap \partial O_\varepsilon}dS
  &\big[\Phi^\varepsilon\ast_3 \big(G
  \otimes(\varepsilon\nabla\psi_{\varepsilon})\otimes
  \vec{n}_R\big)\big](\cdot,t) \\
&+ \frac{\partial}{\partial t}
  \int_{\partial O_{\varepsilon}^i\cap O_\varepsilon}dS
  \big[\Phi^\varepsilon\ast_3
  \big(G\otimes(\varepsilon\nabla\psi_{\varepsilon})\otimes
  \vec{n}_S\big)\big](\cdot,t).
\end{aligned}
\end{equation}

On the one hand, it follows from
the property (a) presented in Remark $\ref{remark:4.1}$ that
\begin{equation*}
(\varepsilon\nabla\psi_\varepsilon)\otimes \vec{n}_R =
\vec{n}_R\otimes(\varepsilon\nabla\psi_\varepsilon)
  \qquad \text{on}\quad \partial O_\varepsilon^i\cap \partial O_\varepsilon,
\end{equation*}
and this together with the antisymmetric property of $\Phi$ leads to
\begin{equation}\label{k-9}
\frac{\partial}{\partial t}
  \int_{\partial O_{\varepsilon}^i\cap \partial O_\varepsilon}dS
  \big[\Phi^\varepsilon\ast_3 \big(G
  \otimes(\varepsilon\nabla\psi_{\varepsilon})\otimes
  \vec{n}_R\big)\big](\cdot,t) = 0.
\end{equation}

On the other hand, using the antisymmetric property again,
we first notice that
\begin{equation*}
\big[\Phi^\varepsilon\ast_3 (G\otimes
  \vec{n}_S\otimes
  \vec{n}_S)\big](\cdot,t)
  =\int_{0}^{t}ds\Phi_{ij,k}(\cdot/\varepsilon,t-s)\big(G_k
  \vec{n}_S^i\vec{n}_S^j\big)(\cdot,s)=0
  \qquad\text{on}\quad \partial O_\varepsilon^i\cap O_\varepsilon,
\end{equation*}
where we employ the fact that $\vec{n}_S\otimes
  \vec{n}_S$ is a symmetric matrix.
Therefore, for any $N>0$, it follows that
\begin{equation*}
\begin{aligned}
&E_{2}^i(t)
  \overset{\eqref{k-9}}{=} \frac{\partial}{\partial t}
  \int_{\partial O_{\varepsilon}^i\cap O_\varepsilon}dS
  \big[\Phi^\varepsilon\ast_3
  \big(G\otimes(\varepsilon\nabla\psi_{\varepsilon})\otimes
  \vec{n}_S\big)\big](\cdot,t) \\
  &= \frac{\partial}{\partial t}
  \int_{\partial O_{\varepsilon}^i\cap O_\varepsilon}dS
  \big[\Phi^\varepsilon\ast_3 (G\otimes
  \big(\varepsilon\nabla\psi_{\varepsilon}+N\vec{n}_S\big)\otimes
  \vec{n}_S)\big](\cdot,t)
  -N\varepsilon \frac{\partial}{\partial t}
  \int_{\partial O_{\varepsilon}^i\cap O_\varepsilon}dS
  \big[\Phi^\varepsilon\ast_3 (G\otimes
  \vec{n}_S\otimes
  \vec{n}_S)\big](\cdot,t) \\
  &= \frac{\partial}{\partial t}\int_{\partial O_{\varepsilon}^i
  \cap O_\varepsilon}dS
  \big[\Phi^\varepsilon\ast_3 (G\otimes
  \big(\varepsilon\nabla\psi_{\varepsilon}+N\vec{n}_S\big)\otimes
  \vec{n}_S)\big](\cdot,t).
\end{aligned}
\end{equation*}

Then, taking $N\to\infty$,
if the right-hand side above vanishes, one can obtain
the desired equality $\eqref{pri:4.4}$. This is true
and we show it in the following.

On the one hand, we can start from the following observation
\begin{equation}\label{pri:4.3}
\lim_{N\to\infty}
  \big[\Phi^\varepsilon\ast_3 (G\otimes
  \big(\varepsilon\nabla\psi_{\varepsilon}+N\vec{n}_S\big)\otimes
  \vec{n}_S)\big](\cdot,t) = 0
  \qquad\text{on}~~\partial O_\varepsilon^i\cap O_\varepsilon,
\end{equation}
which comes from the fact that $\varepsilon\nabla\psi_{\varepsilon}+N\vec{n}_S$  becomes linearly dependent on $\vec{n}_S$, i.e.,
\begin{equation*}
  \cos\theta = \frac{\big(\varepsilon\nabla\psi_\varepsilon+N\vec{n}_S\big)\cdot
  \vec{n}_S}{\sqrt{N^2+c}} = \frac{N}{\sqrt{N^2+c}} \to 1
\qquad\text{as}
\qquad N\to\infty,
\end{equation*}
where we employ the geometry fact $\eqref{k-6}$ by noting that
$\varepsilon\nabla\psi_\varepsilon\sim \vec{n}$.
In other words, we have
\begin{equation*}
\big(\varepsilon\nabla\psi_{\varepsilon}+N\vec{n}_S\big)\otimes
  \vec{n}_S -
  \vec{n}_S\otimes
  \big(\varepsilon\nabla\psi_{\varepsilon}+N\vec{n}_S\big)\to 0,
\qquad\text{as}\qquad N\to\infty.
\end{equation*}
and this together with the antisymmetric property of $\Phi$ gives
$\eqref{pri:4.3}$.

One the other hand, for any fixed $i$ and $\varepsilon$, $E_{2}^i(t)$ is unform bounded with
respect to any $t\in[0,T]$ and $N>0$. Therefore, we have
\begin{equation*}
\begin{aligned}
\lim_{N\to\infty}\frac{\partial}{\partial t}\int_{\partial O_{\varepsilon}^i\cap O_\varepsilon}
&dS
  \big[\Phi^\varepsilon
\ast_3 (G
\otimes
  \big(\varepsilon\nabla\psi_{\varepsilon}
+N\vec{n}_S\big)\otimes
  \vec{n}_S)\big](\cdot,t)\\
&=
\frac{\partial}{\partial t}\Big(
\lim_{N\to\infty}
\int_{\partial O_{\varepsilon}^i\cap O_\varepsilon}dS
  \big[\Phi^\varepsilon\ast_3 (G\otimes
  \big(\varepsilon\nabla\psi_{\varepsilon}+N\vec{n}_S\big)\otimes
  \vec{n}_S)\big](\cdot,t)\Big)\\
&\qquad\qquad\quad=\frac{\partial}{\partial t}
\int_{\partial O_{\varepsilon}^i\cap O_\varepsilon}dS\Big(
\lim_{N\to\infty}
  \big[\Phi^\varepsilon\ast_3 (G\otimes
  \big(\varepsilon\nabla\psi_{\varepsilon}+N\vec{n}_S\big)\otimes
  \vec{n}_S)\big](\cdot,t)\Big) \overset{\eqref{pri:4.3}}{=}0,
\end{aligned}
\end{equation*}
where we employed Lebesgue's dominated convergence theorem
for the second equality. This gives $\eqref{pri:4.4}$.

Finally, we note that the fact that $\eqref{pri:4.4}$ holds uniformly with respect to $i$ and $t\in(0,T]$ is based on consistent geometric features
(i.e., $\nabla\psi_\varepsilon\cdot n_S =0$ on $\partial O_\varepsilon^i
\cap O_\varepsilon$ for each $i$ due to $\eqref{k-6}$) and the same algebraic principles
(i.e., ``the second-order inner product of an antisymmetric matrix and a symmetric matrix is equal to zero.''). This completes all the proofs.
\end{proof}

\begin{remark}\label{remark:4.2}
\emph{In the previous work \cite{Wang-Xu-Zhang22}, to estimate $E_2^i(t)$, we mainly relied on the periodicity of the flux corrector. Therefore, we made a more detailed division for $O_\varepsilon$. However, this method becomes very complicated when addressing the decomposition of the three-dimensional boundary layer. Another obvious drawback is that it can only indicate the existence of the required decomposition for specific regions in $\mathbb{R}^3$. As given in the above proof, we now turn to make more in-depth use of the antisymmetry property of the flux corrector.  This improvement greatly simplifies the decomposition process for boundary layers (see Remark $\ref{remark:4.1}$).}
\end{remark}

\begin{lemma}[existences]\label{lemma3.6}
There exists at least one weak solution for \eqref{divc2} and \eqref{divc3}, respectively.
\end{lemma}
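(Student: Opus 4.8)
The plan is to reduce both equations to a divergence problem with homogeneous Dirichlet data and to check the attendant compatibility (zero-mean) condition; the quantitative bounds are a separate matter, to be established in Propositions \ref{lemma3.6-1} and \ref{lemma3.6-2}, so at this stage only existence is at issue.

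For \eqref{divc2} I would invoke Theorem \ref{thm4}. For a.e.\ $t\in(0,T)$ the right-hand side of \eqref{divc2} is a well-defined element of $L^2(\Omega_\varepsilon)$: it is a finite combination of $\psi_\varepsilon,\nabla\psi_\varepsilon$, the correctors $W,\Phi,\phi$ and their $t$-derivatives (which, by Propositions \ref{prop2.1} and \ref{prop3.1}, lie in $L^1(0,T;W^{1,q})$, hence are finite for a.e.\ $t$), and the mollified data $G,\nabla G,\nabla^2 G$. It thus remains to verify
\[
\int_{\Omega_\varepsilon}\Big(\frac{\partial J_2}{\partial t}+\sum_i\big(\dashint_{O_\varepsilon^i}\tfrac{\partial J_1}{\partial t}\big)1_{O_\varepsilon^i}\Big)\,dx=0\qquad\text{for a.e. }t.
\]
Since $\kappa_0\ge 2$, the layer region $O_\varepsilon$ — and with it every cell $O_\varepsilon^i$ — sits inside $\Omega_\varepsilon$, so $\int_{\Omega_\varepsilon}1_{O_\varepsilon^i}=|O_\varepsilon^i|$; as $\{O_\varepsilon^i\}$ covers $O_\varepsilon$ with essentially disjoint cells (Remark \ref{remark:4.1}) and $\text{supp}(J_1)\subset O_\varepsilon$, the sum integrates to $\sum_i\int_{O_\varepsilon^i}\partial_t J_1=\int_{O_\varepsilon}\partial_t J_1=\int_{\Omega_\varepsilon}\partial_t J_1$. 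Hence the displayed integral equals $\partial_t\int_{\Omega_\varepsilon}(J_1+J_2)(\cdot,t)$, and this vanishes: by \eqref{pde:B} one has $J_1+J_2=-\nabla\cdot w_\varepsilon^{(2)}$ with $w_\varepsilon^{(2)}(\cdot,t)\in H^1_0(\Omega_\varepsilon)^d$ — indeed $\psi_\varepsilon$ annihilates the outer boundary while $W^\varepsilon$ and $\phi^\varepsilon$ vanish on the perforations — so $\int_{\Omega_\varepsilon}(J_1+J_2)(\cdot,t)=0$ by the divergence theorem. Theorem \ref{thm4} then produces, for a.e.\ $t$, a field $\xi(\cdot,t)\in H^1_0(\Omega_\varepsilon)^d$ solving \eqref{divc2}; taking the Bogovskii operator to be linear and bounded makes $t\mapsto\xi(\cdot,t)$ strongly measurable.

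For \eqref{divc3} I would follow the route indicated in the remark after the proof scheme and construct $\eta$ cellwise, avoiding Theorem \ref{thm4}. On each $O_\varepsilon^i$ the datum $\partial_t J_1-(\dashint_{O_\varepsilon^i}\partial_t J_1)$ has zero average over $O_\varepsilon^i$ by construction, so after rescaling by $\varepsilon^{-1}$ — which turns $O_\varepsilon^i$ into one of only finitely many unit-size shapes, up to translation and rational rotation (a ``cylinder'' $Y^\infty(z')$ intersected with the rescaled layer, or a small corner $\varepsilon^{-1}R_j$) — the classical Bogovskii lemma (\cite[Lemma~III.3.1]{Galdi11}) furnishes $\eta_i(\cdot,t)\in H^1_0(O_\varepsilon^i)^d$ with $\nabla\cdot\eta_i=\partial_t J_1-\dashint_{O_\varepsilon^i}\partial_t J_1$ on $O_\varepsilon^i$ and a local estimate whose constant is uniform in $i$ and $\varepsilon$. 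Extending each $\eta_i$ by zero and setting $\eta:=\sum_i\eta_i$, the essential disjointness of the cells together with $\eta_i\in H^1_0(O_\varepsilon^i)^d$ force $\eta(\cdot,t)\in H^1_0(O_\varepsilon)^d$ with the required divergence, and measurability in $t$ is again inherited from the data.

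The genuinely substantive step is the compatibility check for \eqref{divc2}: it is invisible term by term and hinges on identifying $J_1+J_2$ with $-\nabla\cdot w_\varepsilon^{(2)}$ for a field that is globally $H^1_0$ on $\Omega_\varepsilon$, together with the otherwise innocuous observation (using $\kappa_0\ge 2$) that the cells $O_\varepsilon^i$ stay clear of the perforations. The only other point to watch is the gluing of the cellwise solutions for \eqref{divc3} across the decomposition interfaces, which causes no trouble precisely because every piece vanishes on its own boundary, so its extension by zero remains in $H^1$.
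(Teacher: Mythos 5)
Your proposal is correct and takes essentially the same route as the paper: the compatibility condition for \eqref{divc2} is verified exactly as there, by recognizing $J_1+J_2$ as the divergence of the $H^1_0(\Omega_\varepsilon)^d$ field $\psi_\varepsilon\big[W^\varepsilon\ast G+\varepsilon\phi^\varepsilon\ast_2\partial G\big]$ (your $-w_\varepsilon^{(2)}$, which differs only by the divergence-free $u_\varepsilon$) and observing that replacing $J_1$ by its piecewise averages over $\{O_\varepsilon^i\}$ does not change the integral over $\Omega_\varepsilon$. Your cellwise Bogovskii construction for \eqref{divc3} is precisely what the paper carries out in the proof of Proposition \ref{lemma3.6-2}; the lemma itself only records that the zero-mean condition there is immediate by construction.
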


\begin{proof}
The existence of weak solution for \eqref{divc2} and \eqref{divc3} follows from compatibility condition for divergence operator and
it is obvious for \eqref{divc3}.
Recalling the definition of $J_1$ and $J_2$ in \eqref{pde3.8}, and the fact
that
\begin{equation*}
\nabla\cdot\bigg\{
\psi_{\varepsilon}[W^{\varepsilon}\ast G+\varepsilon\phi^{\varepsilon}_{k,j}\ast \nabla_k G_j]
\bigg\}=J_1+J_2,
\end{equation*}
it follows that, for a.e. $t\geq0$,
\begin{equation*}
\int_{\Omega_{\varepsilon}}(J_1+J_2)(\cdot,t)
=\int_{\Omega_{\varepsilon}}
\nabla\cdot\bigg\{
\psi_{\varepsilon}[W^{\varepsilon}\ast G+\varepsilon\phi^{\varepsilon}\ast \nabla G]
\bigg\}(\cdot,t)=\int_{\partial \Omega_{\varepsilon}}dS\vec{n}\cdot\bigg\{
\psi_{\varepsilon}[W^{\varepsilon}\ast G+\varepsilon\phi^{\varepsilon}\ast \nabla G]
\bigg\}(\cdot,t)=0,
\end{equation*}
where, by abusing the notation, $\vec{n}$ represents the unit outward normal vector of
$\partial\Omega_\varepsilon$.
Therefore, due to the fact that supp$(J_1)=O_{\varepsilon}=
\sum\limits_{i}O_{\varepsilon}^i$ (see Remark $\ref{remark:4.1}$),
there holds
\begin{equation*}
\begin{aligned}
\int_{\Omega_{\varepsilon}}
\Big[ J_2+\sum_{i} (\dashint_{O_{\varepsilon}^i} J_1)1_{O_{\varepsilon}^i} \Big](\cdot,t)=\int_{\Omega_{\varepsilon}}
\big(J_1+J_2\big)(\cdot,t)=0.
\end{aligned}
\end{equation*}

Consequently, we have
\begin{equation*}
\int_{\Omega_{\varepsilon}}
\Big[\frac{\partial J_2}{\partial t} +\sum_{i} (\dashint_{O_{\varepsilon}^i}\frac{\partial J_1}{\partial t})1_{O_{\varepsilon}^i}\Big](\cdot,t)=
\int_{\Omega_{\varepsilon}}
 \partial_t \big(J_1+J_2\big)(\cdot,t)=0.
\end{equation*}
This is the compatibility condition for \eqref{divc2}.
\end{proof}

\medskip

Consequently, the structure of the proof of Proposition $\ref{lemma3.6-1}$
can be presented by the following flow chart.
\begin{center}
\begin{tikzpicture}[<-,>=stealth', global scale =0.8]
\node[state,
      text width = 3cm] (l1)
{ Proposition $\ref{lemma3.6-1}$
};

\node[state,
      left of= l1,
      text width = 3cm,
      yshift = 1cm,
      node distance=5cm] (l2)
{ Lemma $\ref{lemma3.6}$
};

\node[state,
      left of= l1,
      text width = 3cm,
      yshift = -1cm,
      node distance=5cm] (l3)
{ Theorem $\ref{thm4}$
};

\node[state,
      right of = l1,
      yshift=1cm,
      text width = 3cm,
      node distance=5cm] (m1)
{Lemma $\ref{lemma:5.1}$};

\node[state,
      right of = l1,
      yshift=-1cm,
      text width = 3cm,
      node distance=5cm] (m2)
{Lemma $\ref{*lemma4.1}$};

\node[state,
      right of = m1,
      text width = 3cm,
      node distance = 5cm] (r1)
{Lemma $\ref{lemma:cut-off}$
};

\node[state,
      right of = m2,
      text width = 3cm,
      node distance = 5cm] (r2)
{Subection $\ref{subsec:4.2}$
};

\path (l1) edge (l2)
      (l1) edge (l3)
      (l1) edge (m1)
      (l1) edge (m2)
      (m1) edge (r1)
      (m2) edge (r2);
\end{tikzpicture}
\end{center}

\noindent
\textbf{The proof of Proposition $\ref{lemma3.6-1}$.}
The existence of the solution of $\eqref{divc2}$ has been established
in Lemma $\ref{lemma3.6}$, while the main job is to show the
estimate $\eqref{pri:4.1}$. To do so, applying the result presented in Theorem
$\ref{thm4}$ to the solution of the equations $\eqref{divc2}$, we have
\begin{equation*}
\begin{aligned}
\|\xi_{\varepsilon}\|_{L^2(\Omega_{\varepsilon,T})}\lesssim
\varepsilon \|\nabla\xi_{\varepsilon}\|_{L^2(\Omega_{\varepsilon,T})}
&\overset{\eqref{pde3.21}}{\lesssim} \big\|\frac{\partial}{\partial t}(J_2+\sum_{i}(\dashint_{O_{\varepsilon}^i}
J_1)1_{O_{\varepsilon}^i})
\big\|_{L^2(\Omega_{\varepsilon,T})}\\
&\leq
\big\|\partial_tJ_2\|_{L^2(\Omega_{\varepsilon,T})}
+
\|\sum_{i}(\dashint_{O_{\varepsilon}^i}\partial_tJ_1)1_{O_{\varepsilon}^i})\big\|_{L^2(\Omega_{\varepsilon,T})}.
\end{aligned}
\end{equation*}
In view of Lemmas $\ref{lemma:5.1}$ and $\ref{*lemma4.1}$,
the right-hand side above can be controlled by
\begin{equation*}
\begin{aligned}
\varepsilon^{\frac{1}{2}}
&\bigg\{\|\partial_t \phi\|_{L^1(0,T;L^2(Y))}
+\|\phi(\cdot,0)\|_{L^2(Y_f)}
\bigg\}
\|(\nabla G,\varepsilon^{\frac{1}{2}}\nabla^2G)\|_{L^2(0,T;L^{\infty}
(O_\varepsilon))}\\
& +\varepsilon^{\frac{1}{2}}
\bigg\{\|\partial_t \Phi\|_{L^1(0,T;L^{\infty}(Y))}+
\|\Phi(\cdot,0)\|_{L^{\infty}(Y)}
\bigg\}
\|\nabla G\|_{L^2(0,T;L^{\infty}
(O_\varepsilon))}\\
+&\Big\{\|\partial_tA\|_{L^1(0,T)}+|A(0)|\Big\}
\bigg\{\varepsilon^{\frac{1}{2}}\|\nabla F\|_{L^2(0,T;C(\bar{\Omega}))}
+\|\nabla(G-F)\|_{
L^2(0,T;L^2{(\text{supp}(\psi_{\varepsilon}))})}\bigg\}.
\end{aligned}
\end{equation*}

Applying Propositions $\ref{prop2.1}$, $\ref{prop3.1}$, and $\ref{P:1}$
to the above expression,  we can further derive that
\begin{equation}\label{f:5.15}
\begin{aligned}
\varepsilon \|\nabla\xi_{\varepsilon}\|_{L^2(\Omega_{\varepsilon,T})}
&\lesssim \varepsilon^{\frac{1}{2}}
\bigg\{
\|\nabla G\|_{L^2(0,T;L^{\infty}
(\text{supp}(\psi_{\varepsilon})))}
+\|G\|_{L^2(0,T;L^{\infty}
(\text{supp}(\psi_{\varepsilon})))}
\bigg\}\\
&\qquad\qquad+\varepsilon^{\frac{1}{2}}
\bigg\{
\|\nabla G\|_{L^2(0,T;L^{\infty}
(\text{supp}(\psi_{\varepsilon})))}
+\varepsilon^{\frac{1}{2}}\|\nabla^2G\|_{L^2(0,T;L^{\infty}
(\text{supp}(\psi_{\varepsilon})))}\bigg\}\\
&\qquad\qquad\qquad\qquad+
\varepsilon^{\frac{1}{2}}\|\nabla F\|_{L^2(0,T;C(\bar{\Omega}))}
+\|\nabla(G-F)\|_{L^2(0,T;L^2{(\text{supp}(\psi_{\varepsilon}))})}.
\end{aligned}
\end{equation}

By using  Lemma $\ref{lemma3.2}$, the right-hand side of
$\eqref{f:5.15}$
can be governed by $\varepsilon^{\frac{1}{2}}\|f\|_{L^2(0,T;C^{1,1/2}(\bar{\Omega}))}$,
and the desired estimate $\eqref{pri:4.1}$ follows. This ends the whole proof.
\qed

\subsection{\centering Proof of Proposition $\ref{lemma3.6-2}$}\label{subsec:4.4}

\noindent
We construct $\eta_{\varepsilon}$ according to the decomposition
introduced in Subsection $\ref{subsec:4.2}$. By virtue of
Remark $\ref{remark:4.1}$, we have the decomposition of $O_\varepsilon$. For each $O_\varepsilon^i$ and any $t\geq 0$ fixed,
we can get a $\eta_{\varepsilon}^{i}$ which satisfies the following equation \eqref{*pde3.9}. The desired solution $\eta_{\varepsilon}$ follows from sticking these $\eta_{\varepsilon}^{i}$ together piece by piece.
\begin{equation}\label{*pde3.9}
\left\{
\begin{aligned}
  \nabla\cdot\eta_{\varepsilon}^{i}(\cdot,t) &=\frac{\partial J_1}{\partial t}
  -(\dashint_{O_{\varepsilon}^i}\frac{\partial J_1}{\partial t})1_{O_{\varepsilon}^i},&\text{in}&\quad O_{\varepsilon}^i;\\
  \eta_{\varepsilon}^{i}(\cdot,t)&=0,&\text{on}&\quad\partial O_{\varepsilon}^i,
\end{aligned}\right.
\end{equation}

Moreover, we have the following estimate
\begin{equation}\label{*pde4.14}
\|\nabla\eta_{\varepsilon}^{i}(\cdot,t)\|_{L^2(O_{\varepsilon}^i)}\leq C \|\frac{\partial J_1}{\partial t}-(
\dashint_{O_{\varepsilon}^i}\frac{\partial J_1}{\partial t}
)1_{O_{\varepsilon}^i}\|_{L^2(O_{\varepsilon}^i)}
\leq C
\|\partial_t J_1\|_{L^2(O_{\varepsilon}^i)},
\end{equation}
where the constant $C$ \textbf{does not} depend on $\varepsilon$ and $i$ (see e.g. \cite[Chapter III.3]{Galdi11}).

Let $\eta_{\varepsilon}:=\sum_{i}\eta_{\varepsilon}^{i}$,
and it is not hard to observe that
\begin{equation*}
 \|\nabla \eta_{\varepsilon}(\cdot,t)\|_{L^2(O_\varepsilon)}^2
 = \sum_{i}\|\nabla\eta_{\varepsilon}^{i}(\cdot,t)\|_{L^2(O_\varepsilon^i)}^2.
\end{equation*}
This together with $\eqref{*pde4.14}$ leads to
\begin{equation}\label{f:5.13}
 \|\nabla \eta_{\varepsilon}(\cdot,t)\|_{L^2(O_\varepsilon)}^2
 \lesssim \|\partial_t J_1\|_{L^2(O_\varepsilon)}^2.
\end{equation}

Now, dealing with the term
in the right-hand side of $\eqref{f:5.13}$,
it follows from the definition of $J_1$ in
\eqref{pde3.8} and Minkowski's inequality  that
\begin{equation}\label{f:5.12}
\begin{aligned}
&\int_{0}^{T}dt\int_{O_{\varepsilon}}
\Big|\frac{\partial J_1}{\partial t}\Big|^2
\lesssim\varepsilon^{-2}
\int_{0}^{T}dt\int_{O_{\varepsilon}}
\Big|\frac{\partial }{\partial t}
\big[(W^{\varepsilon}-A)\ast G\big]\Big|^2\\
&\lesssim\varepsilon^{-2}
\|\partial_tW^{\varepsilon}-\partial_tA\|^2_{L^{1}
(0,T;L^2(O_{\varepsilon}))}
\|G\|^2_{L^{2}(0,T;L^{\infty}(O_{\varepsilon}))}
+\varepsilon^{-2}
\|W^{\varepsilon}(\cdot,0)-A(0)\|^2_{L^{2}
(O_{\varepsilon})}
\|G\|^2_{L^{2}(0,T;L^{\infty}(O_{\varepsilon}))}.
\end{aligned}
\end{equation}
By a rescaling argument used for $\partial_tW^{\varepsilon}$
and using its periodicity,
we note that
\begin{equation*}
\begin{aligned}
\|\partial_tW^{\varepsilon}-\partial_tA\|_{L^{1}
(0,T;L^2(O_{\varepsilon}))}
=&\int_0^{T}dt\bigg(\int_{O_{\varepsilon}}dx
|\partial_tW(x/\varepsilon,t)-\partial_tA(t)|^2\bigg)^{1/2}\\
&\lesssim \varepsilon^{1/2}\int_{0}^Tdt\big|\partial_t A(t)\big|
+\int_{0}^Tdt\bigg(\int_{O_{\varepsilon}}dx
\big|\partial_tW(x/\varepsilon,t)\big|^2\bigg)^{1/2}\\
&=\varepsilon^{1/2}
\Big\{\|\partial_tA\|_{L^1(0,T)}+
\|\partial_tW\|_{L^1(0,T;L^2(Y_f))}\Big\};
\end{aligned}
\end{equation*}
By the same token, we have
\begin{equation*}
\|W^{\varepsilon}(\cdot,0)-A(0)\|_{L^{2}
(O_{\varepsilon})}
\lesssim\varepsilon^{1/2}.
\end{equation*}

Inserting the above two estimates back into $\eqref{f:5.12}$,
and then together with $\eqref{f:5.13}$, we obtain that
\begin{equation*}
\begin{aligned}
\varepsilon^2\int_{0}^{T}dt\int_{O_\varepsilon}
\big|\nabla \eta_{\varepsilon}(\cdot,t)\big|^2
\lesssim \varepsilon
\Big\{\|\partial_tA\|_{L^1(0,T)}
+
\|\partial_tW\|_{L^1(0,T;L^2(Y_f))}+ 1\Big\}^2
\|G\|^2_{L^{2}(0,T;L^{\infty}(O_{\varepsilon}))}.
\end{aligned}
\end{equation*}

Consequently, appealing to Proposition $\ref{prop2.1}$ and Lemma $\ref{lemma2.5}$,
we have derived the main part of the stated estimate $\eqref{pri:5.2}$, and
the remainder of the proof follows from Poincar\'e's inequality. This
completes the whole proof.
\qed



\section{Proof of Theorem $\ref{thm1}$}\label{sec5}

\begin{lemma}\label{lemma:4.3}
Let $p_0$ and $f$ be associated by $\eqref{pde1.2}$.
Suppose that $(u_\varepsilon,p_\varepsilon)$ satisfies $\eqref{pde1.1}$.
Let $(W,\pi)$ be the corrector given in
$\eqref{pde2.1}$, while the correctors $(\phi,\hat{\xi_{\varepsilon}},\hat{\eta_{\varepsilon}})$
are related to $\eqref{divc1*}$ and $\eqref{pde3.11}$.
Define the error term $(w_{\varepsilon},q_{\varepsilon})$ as in $\eqref{eq:A}$.
Then, the pair $(w_{\varepsilon},q_{\varepsilon})$ satisfies the following equations:
\begin{equation}\label{pde3.13}
\left\{
\begin{aligned}
\partial_t w_{\varepsilon}
-\varepsilon^2\Delta w_{\varepsilon}+\nabla q_{\varepsilon}&=I_1+\varepsilon I_2+\varepsilon^2I_3+\varepsilon^3 I_4,&\text{in}&\quad\Omega_{\varepsilon}\times(0,T];\\
\nabla\cdot w_{\varepsilon}&=0,&\text{in}&\quad
\Omega_{\varepsilon}\times(0,T];\\
w_{\varepsilon}&=0,&\text{on}&\quad
\partial\Omega_{\varepsilon}\times(0,T];\\
w_{\varepsilon}|_{t=0}&=0,&\text{on}
&\quad\Omega_{\varepsilon},
\end{aligned}
\right.
\end{equation}
in which we adopt the convention presented in $\eqref{notation:4.1}$ to have
the expressions of $I_1,I_2,I_3$, and $I_4$ by
\begin{equation}\label{notation:5.2}
\begin{aligned}
I_1:=&f-\nabla p_0-\psi_{\varepsilon}W^{\varepsilon}(\cdot,0)G
+\xi_{\varepsilon}+\eta_{\varepsilon};\\
I_2:=&-\psi_{\varepsilon}
\bigg[
\partial_t\phi^{\varepsilon}\ast_2\partial G+
\phi^{\varepsilon}(\cdot,0):\partial G
\bigg]+2\psi_{\varepsilon}(\partial W)^{\varepsilon}\ast_2\partial G
-
\nabla\psi_{\varepsilon}
\pi^{\varepsilon}\ast_1 G
-\psi_{\varepsilon}\nabla G\ast\pi^{\varepsilon};\\
I_3:=&\psi_{\varepsilon}
\nabla\cdot
\Big\{
(\nabla\phi)^{\varepsilon}\ast_2\partial G
\Big\}
+\nabla\cdot
\bigg\{
\nabla\psi_{\varepsilon}\otimes \big(W^{\varepsilon}\ast G\big)
\bigg\}
+\nabla\big(W^{\varepsilon}\ast G\big)\nabla
\psi_{\varepsilon}
+\psi_{\varepsilon} W^{\varepsilon}\ast\Delta G
-\Delta\hat{\xi_{\varepsilon}}
-\Delta\hat{\eta_{\varepsilon}};\\
I_4:=&\nabla
\big(\phi^{\varepsilon}\ast_2\partial G\big)\nabla\psi_{\varepsilon}+
\nabla\cdot\bigg\{
\nabla\psi_{\varepsilon}\otimes
\big(\phi^{\varepsilon}\ast_2\partial G\big)
\bigg\}
+\psi_{\varepsilon}
\nabla\cdot\bigg\{\phi^{\varepsilon}\ast_2\nabla\partial G\bigg\},
\end{aligned}
\end{equation}
where $G$ is associated with the quantity
$f-\nabla p_0$, defined in Subsection $\ref{notation}$.
\end{lemma}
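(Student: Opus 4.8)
The plan is a direct --- if bookkeeping-heavy --- substitution, carried out in the sense of distributions on $\Omega_{\varepsilon,T}$ and organised by powers of $\varepsilon$: once the composite ansatz \eqref{eq:A} is inserted into the unsteady Stokes operator, the right-hand side is completely determined, so nothing beyond calculus is involved. First I would set $w_\varepsilon^{(2)}=u_\varepsilon-\psi_\varepsilon\big(W^\varepsilon\ast G+\varepsilon\phi^\varepsilon\ast_2\partial G\big)$ and use \eqref{pde1.1} to replace $\partial_t u_\varepsilon-\varepsilon^2\Delta u_\varepsilon+\nabla p_\varepsilon$ by $f$; since $\partial_t\widehat{\xi}=\xi$ and $\partial_t\widehat{\eta}=\eta$ by \eqref{pde3.11}, this reduces the claim to expanding the three blocks $[\partial_t-\varepsilon^2\Delta]\big(\psi_\varepsilon W^\varepsilon\ast G\big)$, $\varepsilon[\partial_t-\varepsilon^2\Delta]\big(\psi_\varepsilon\phi^\varepsilon\ast_2\partial G\big)$ and $\varepsilon\nabla\big(\psi_\varepsilon\pi^\varepsilon\ast_1 G\big)$, the surviving $-\varepsilon^2\Delta\widehat{\xi}-\varepsilon^2\Delta\widehat{\eta}$ going straight into $I_3$.

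Each block I would expand using three elementary rules, applied term by term: the Leibniz rule for temporal convolutions $\partial_t(a\ast b)=a(0)\,b(\cdot,t)+(\partial_t a)\ast b$, which peels off $\psi_\varepsilon W^\varepsilon(\cdot,0)G$ and $\varepsilon\psi_\varepsilon\phi^\varepsilon(\cdot,0):\partial G$; the product rules $\Delta(\psi_\varepsilon g)=\psi_\varepsilon\Delta g+\nabla\cdot(\nabla\psi_\varepsilon\otimes g)+(\nabla g)\nabla\psi_\varepsilon$ and $\nabla(\psi_\varepsilon h)=\psi_\varepsilon\nabla h+h\otimes\nabla\psi_\varepsilon$; and the chain-rule rescalings $\nabla_x[g(\cdot/\varepsilon)]=\varepsilon^{-1}(\nabla g)(\cdot/\varepsilon)$, $\Delta_x[g(\cdot/\varepsilon)]=\varepsilon^{-2}(\Delta g)(\cdot/\varepsilon)$, which turn the prefactors $\varepsilon^2,\varepsilon^3$ in front of the Laplacians into the weights $1,\varepsilon,\varepsilon^2$ on $(\Delta W)^\varepsilon,(\nabla W)^\varepsilon,W^\varepsilon$ and $\varepsilon,\varepsilon^2,\varepsilon^3$ on $(\Delta\phi)^\varepsilon,(\nabla\phi)^\varepsilon,\phi^\varepsilon$. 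All resulting time-convolutions are read in the notation \eqref{notation:4.2} and \eqref{notation:4.1}, and the manipulations are justified by the corrector regularity of Proposition \ref{prop2.1} ($\partial_t W,\nabla^2 W,\nabla\pi\in L^1(0,T;L^q)$) and Proposition \ref{prop3.1} ($\phi,\partial_t\phi\in L^1(0,T;W^{1,q})$). Because $\phi$ is only $W^{1,q}$ in space, the two $\phi$-pieces carrying a second spatial derivative must be kept in the divergence forms $\psi_\varepsilon\nabla\cdot\{(\nabla\phi)^\varepsilon\ast_2\partial G\}$ and $\psi_\varepsilon\nabla\cdot\{\phi^\varepsilon\ast_2\nabla\partial G\}$, so \eqref{pde3.13} holds in $\mathcal{D}'(\Omega_{\varepsilon,T})$ --- which is precisely the form the later energy argument uses.

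The crux is the cancellation at order $\varepsilon^0$: the $W$- and $\pi$-terms emerging from $-[\partial_t-\varepsilon^2\Delta]\big(\psi_\varepsilon W^\varepsilon\ast G\big)$ and $-\varepsilon\nabla\big(\psi_\varepsilon\pi^\varepsilon\ast_1 G\big)$ add up, apart from $-\psi_\varepsilon W^\varepsilon(\cdot,0)G$, to
\[
-\psi_\varepsilon\big[(\partial_t W)^\varepsilon-(\Delta W)^\varepsilon+(\nabla\pi)^\varepsilon\big]\ast G,
\]
which vanishes identically on $\Omega_{\varepsilon,T}$ since $W(\cdot/\varepsilon,\cdot),\pi(\cdot/\varepsilon,\cdot)$ satisfy the rescaled corrector system \eqref{pde2.1} pointwise there (only positive times enter the convolution, so the initial-time incompatibility is harmless here). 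What remains is exactly $I_1=f-\nabla p_0-\psi_\varepsilon W^\varepsilon(\cdot,0)G+\xi+\eta$; sorting the leftover terms by $\varepsilon^1,\varepsilon^2,\varepsilon^3$ and regrouping the $\phi$-pieces as above produces $\varepsilon I_2$, $\varepsilon^2 I_3$, $\varepsilon^3 I_4$ with $I_2,I_3,I_4$ as in \eqref{notation:5.2}. The other three lines of \eqref{pde3.13} are then immediate: $\nabla\cdot w_\varepsilon=0$ by adding \eqref{pde:B}, i.e.\ $\nabla\cdot w_\varepsilon^{(2)}=-(J_1+J_2)$ with $J_1,J_2$ from \eqref{pde3.8}, to $\nabla\cdot(\widehat{\xi}+\widehat{\eta})=J_1+J_2$ (the sum of \eqref{divc4} and \eqref{divc5}); $w_\varepsilon=0$ on $\partial\Omega_\varepsilon\times(0,T)$ from $u_\varepsilon=0$, $\psi_\varepsilon|_{\partial\Omega}=0$, $W^\varepsilon=\phi^\varepsilon=0$ on the interior hole boundaries (the Dirichlet conditions in \eqref{pde2.1} and \eqref{divc1}) and $\widehat{\xi}=\widehat{\eta}=0$ on $\partial\Omega_\varepsilon$; and $w_\varepsilon|_{t=0}=0$ since every temporal convolution and every primitive $\widehat{(\cdot)}$ vanishes at $t=0$.

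The step I expect to be the real obstacle is the bookkeeping in the second paragraph: tracking which tensor index is contracted in each ``$\ast_2$'', ``$\ast_3$'', ``$\nabla\psi_\varepsilon\otimes(\cdot)$'' and ``$(\nabla\cdot)\,\nabla\psi_\varepsilon$'', and matching $\varepsilon$-powers after the rescaling $x\mapsto x/\varepsilon$. In particular one must verify that the order-$\varepsilon$ term $\varepsilon\psi_\varepsilon(\Delta\phi)^\varepsilon\ast_2\partial G$ and one of the order-$\varepsilon^2$ terms $\varepsilon^2\psi_\varepsilon(\nabla\phi)^\varepsilon\ast_3\nabla\partial G$ --- both generated by the single Laplacian $\varepsilon^3\Delta\big(\psi_\varepsilon\phi^\varepsilon\ast_2\partial G\big)$ --- recombine into $\varepsilon^2\psi_\varepsilon\nabla\cdot\{(\nabla\phi)^\varepsilon\ast_2\partial G\}$ inside $\varepsilon^2 I_3$, while the remaining pair recombines into $\varepsilon^3\psi_\varepsilon\nabla\cdot\{\phi^\varepsilon\ast_2\nabla\partial G\}$ inside $\varepsilon^3 I_4$; since $\phi\notin W^{2,q}$ these regroupings must be kept in distributional divergence form, and that is where sign or index errors would most easily slip in.
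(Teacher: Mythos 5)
Your proposal is correct and follows essentially the same route as the paper's proof: direct substitution of the ansatz \eqref{eq:A} into the unsteady Stokes operator, the Leibniz rule for the temporal convolutions to peel off the initial-value terms, the product/rescaling rules to sort terms by powers of $\varepsilon$, the order-$\varepsilon^0$ cancellation via the corrector system \eqref{pde2.1}, and the divergence-free condition obtained by matching $\nabla\cdot w_\varepsilon^{(2)}=-(J_1+J_2)$ against the sum of \eqref{divc4} and \eqref{divc5}. Your additional remarks on keeping the $\phi$-terms in distributional divergence form and on the harmlessness of the initial-time incompatibility inside the convolution are consistent with how the paper organizes $I_3$ and $I_4$.
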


\begin{proof}
To obtain the equations $\eqref{pde3.13}$, we merely insert
the expression $\eqref{eq:A}$ into the left-hand sides of
$\eqref{pde3.13}$, and compute it term by term, directly.
By noticing the boundary conditions of $u_\varepsilon$,
$\psi_\varepsilon W^\varepsilon$, $\psi_\varepsilon\phi^\varepsilon$, $\hat{\xi_{\varepsilon}}$ and
$\hat{\eta_{\varepsilon}}$, it is not hard to verify that $w_\varepsilon(\cdot,t)$ vanishes on the
boundary of $\Omega_\varepsilon$ for $t\geq 0$, which satisfies
the third line of $\eqref{pde3.13}$. From the initial value
of $u_\varepsilon$, the definition of the convolution with respect to the temporal variable, as well as, $\hat{\xi_{\varepsilon}}$ and
$\hat{\eta_{\varepsilon}}$, the last line of $\eqref{pde3.13}$ can be simply checked. Therefore,
the main job is devoted to deriving the first and second line of $\eqref{pde3.13}$.

\medskip

\textbf{Part 1.}
We firstly address the first line of the equations $\eqref{pde3.13}$,
and start from dealing with the term $\partial_t w_{\varepsilon}$,
by noting the notation presented in the first and third line of $\eqref{notation:4.1}$,
\begin{equation}\label{f:4.2}
\begin{aligned}
\frac{\partial w_{\varepsilon}}{\partial t}
&=
\frac{\partial u_{\varepsilon}}{\partial t}-
\psi_{\varepsilon}\frac{\partial}{\partial t}
\big(W^{\varepsilon}\ast G+\varepsilon \phi^{\varepsilon}\ast_2
\partial G\big)(\cdot,t)
+\xi_{\varepsilon}(\cdot,t)
+\eta_{\varepsilon}(\cdot,t)\\
&=\frac{\partial u_{\varepsilon}}{\partial t}
+\xi_{\varepsilon}(\cdot,t)
+\eta_{\varepsilon}(\cdot,t)
-\psi_{\varepsilon}
\bigg\{\partial_tW^{\varepsilon}\ast G +W^{\varepsilon}(\cdot,0)G+\varepsilon \big(\partial_t\phi^{\varepsilon}\ast_2\partial G\big)
+\varepsilon \phi^{\varepsilon}(\cdot,0):\partial G
\bigg\}.
\end{aligned}
\end{equation}

\medskip

Then, we turn to the term $\varepsilon^2\Delta w_{\varepsilon}$,
\begin{equation}\label{f:4.1}
\begin{aligned}
&-\varepsilon^2\Delta w_{\varepsilon}=
-\varepsilon^2\Delta u_{\varepsilon}+\varepsilon^2\Delta\bigg\{
\psi_{\varepsilon}\big(W^{\varepsilon}\ast G+\varepsilon\phi^{\varepsilon}\ast_2\partial G\big)
\bigg\}
-\varepsilon^2\Delta\hat{\xi_{\varepsilon}}
-\varepsilon^2\Delta\hat{\eta_{\varepsilon}}\\
=&-\varepsilon^2\Delta u_{\varepsilon}
-\varepsilon^2\Delta\hat{\xi_{\varepsilon}}
-\varepsilon^2\Delta\hat{\eta_{\varepsilon}}
+\varepsilon^2\nabla\cdot
\bigg\{
\nabla\psi_{\varepsilon}\otimes\big(W^{\varepsilon}\ast G+\varepsilon\phi^{\varepsilon}\ast_2\partial G\big)+\psi_{\varepsilon}\nabla \big(W^{\varepsilon}\ast G+\varepsilon\phi^{\varepsilon}\ast_2\partial G\big)
\bigg\},
\end{aligned}
\end{equation}
while the last term above leads to three terms:
\begin{equation*}
\underbrace{\varepsilon^2\nabla\cdot
\bigg\{
\nabla\psi_{\varepsilon}\otimes\big(W^{\varepsilon}\ast G+\varepsilon\phi^{\varepsilon}\ast_2\partial G\big)
\bigg\}}_{R_1}
+\underbrace{\varepsilon^2\nabla
\big(W^{\varepsilon}\ast G+\varepsilon\phi^{\varepsilon}\ast_2\partial G\big)\nabla\psi_{\varepsilon}}_{R_2}
+\underbrace{\varepsilon^2\psi_{\varepsilon}
\Delta\big(W^{\varepsilon}\ast G+\varepsilon\phi^{\varepsilon}\ast_2\partial G\big)}_{R_3}.
\end{equation*}
The first term $R_1$ and the second term $R_2$ are quite easy, and we merely rearrange
them in terms of the power of $\varepsilon$, and it follows that
\begin{equation*}
\begin{aligned}
 R_1 &= \varepsilon^2\nabla\cdot\Big\{
 \nabla\psi_\varepsilon\otimes\big(W^\varepsilon\ast G\big)\Big\}
 + \varepsilon^3\nabla\cdot\Big\{
 \nabla\psi_\varepsilon\otimes\big(\phi^\varepsilon\ast_2\partial G\big)\Big\};\\
 R_2 &= \varepsilon^2\nabla\big(W^\varepsilon\ast G\big)\nabla\psi_\varepsilon
 +\varepsilon^3\nabla\big(\phi^\varepsilon\ast_2\partial G\big)\nabla\psi_\varepsilon.
\end{aligned}
\end{equation*}
We continue to handle $R_3$. According to the power of $\varepsilon$, there holds
\begin{equation*}
\begin{aligned}
R_3 &= \varepsilon^2\psi_{\varepsilon}
\bigg\{
\varepsilon^{-2}(\Delta W)^{\varepsilon}\ast G
+2\varepsilon^{-1}(\partial W)^{\varepsilon}\ast_2\partial G+W^{\varepsilon}\ast\Delta G
\bigg\}
+\varepsilon^{2}\psi_{\varepsilon}\nabla\cdot
\bigg\{
(\nabla\phi)^{\varepsilon}\ast_2\partial G
+\varepsilon\phi^{\varepsilon}\ast_{2}\nabla\partial G
\bigg\}\\
&= \psi_{\varepsilon}(\Delta W)^{\varepsilon}\ast G
+2\varepsilon\psi_{\varepsilon}
\big[(\partial W)^{\varepsilon}\ast_2\partial G\big]
+\varepsilon^2\psi_{\varepsilon}
W^{\varepsilon}\ast\Delta G
+\varepsilon^2\psi_{\varepsilon}
\nabla\cdot\Big(
(\nabla\phi)^{\varepsilon}\ast_2\partial G
\Big)
+\varepsilon^3\psi_{\varepsilon}\nabla\cdot
\Big(\phi^{\varepsilon}\ast_2\nabla\partial G\Big),
\end{aligned}
\end{equation*}
where we substituted $\nabla$ with $\partial$ at the corresponding positions to highlight
that the components of the gradient are involved in the inner product of tensors
(see the convention presented in the part of ``notation for convolution'' in Subsection $\ref{notation}$).
Therefore, plugging the terms $R_1$, $R_2$ and $R_3$ above back into
$\eqref{f:4.1}$ we obtain that
\begin{equation}\label{f:4.3}
\begin{aligned}
-\varepsilon^2\Delta w_{\varepsilon}
=&\psi_{\varepsilon}(\Delta W)^{\varepsilon}\ast G
+2\varepsilon\psi_{\varepsilon}
\big[(\partial W)^{\varepsilon}\ast_2\partial G\big]
-\varepsilon^2\Delta u_{\varepsilon}
-\varepsilon^2\Delta\hat{\xi_{\varepsilon}}
-\varepsilon^2\Delta\hat{\eta_{\varepsilon}}
+\varepsilon^2\psi_{\varepsilon}
\nabla\cdot\Big(
(\nabla\phi)^{\varepsilon}\ast_2\partial G
\Big)\\
&+\varepsilon^2\nabla\cdot\bigg\{
\nabla\psi_{\varepsilon}\otimes \big(W^{\varepsilon}\ast G\big)
\bigg\}
+\varepsilon^2\nabla
\big(W^{\varepsilon}\ast G\big)\nabla\psi_{\varepsilon}
+\varepsilon^2\psi_{\varepsilon}
W^{\varepsilon}\ast\Delta G\\
&+\varepsilon^{3}\nabla\cdot
\bigg\{
\nabla\psi_{\varepsilon}\otimes
\big(\phi^{\varepsilon}\ast_2\partial G\big)
\bigg\}
+\varepsilon^3
\nabla\big(\phi^{\varepsilon}\ast_2\partial G\big)\nabla\psi_{\varepsilon}
+\varepsilon^3\psi_{\varepsilon}\nabla\cdot
\Big(\phi^{\varepsilon}\ast_2\nabla\partial G\Big).
\end{aligned}
\end{equation}

\medskip

Now, we turn to the pressure term
\begin{equation}\label{f:4.4}
\nabla q_{\varepsilon}=\nabla p_{\varepsilon}-\nabla p_0-\varepsilon\nabla\psi_{\varepsilon}
\pi^{\varepsilon}\ast_1 G
-\psi_{\varepsilon}(\nabla \pi)^{\varepsilon}\ast G
-\varepsilon\psi_{\varepsilon}
\nabla G\ast\pi^{\varepsilon},
\end{equation}
where we refer the reader to the convention on ``$\ast_1$''
and ``$\ast$'' presented in $\eqref{notation:4.2}$.

Combining the equalities $\eqref{f:4.2}$, $\eqref{f:4.3}$ and $\eqref{f:4.4}$,
we have
\begin{equation*}
\begin{aligned}
\frac{\partial w_{\varepsilon}}{\partial t}
&-\varepsilon^2\Delta w_{\varepsilon}
+\nabla q_{\varepsilon}=
\frac{\partial u_{\varepsilon}}{\partial t}
+\xi_{\varepsilon}+\eta_{\varepsilon}-
\psi_{\varepsilon}\bigg[
\partial_tW^{\varepsilon}\ast G +W^{\varepsilon}(\cdot,0)G+\varepsilon \big(\partial_t\phi^{\varepsilon}\ast_2\partial G\big)
+\varepsilon \phi^{\varepsilon}(\cdot,0):\partial G
\bigg]\\
&+\psi_{\varepsilon}(\Delta W)^{\varepsilon}\ast G
+2\varepsilon\psi_{\varepsilon}
\big[(\partial W)^{\varepsilon}\ast_2\partial G\big]
-\varepsilon^2\Delta u_{\varepsilon}
-\varepsilon^2\Delta\hat{\xi_{\varepsilon}}
-\varepsilon^2\Delta\hat{\eta_{\varepsilon}}
+\varepsilon^2\psi_{\varepsilon}
\nabla\cdot\Big(
(\nabla\phi)^{\varepsilon}\ast_2\partial G
\Big)\\
&+\varepsilon^2\nabla\cdot\bigg\{
\nabla\psi_{\varepsilon}\otimes \big(W^{\varepsilon}\ast G\big)
\bigg\}
+\varepsilon^2\nabla
\big(W^{\varepsilon}\ast G\big)\nabla\psi_{\varepsilon}
+\varepsilon^2\psi_{\varepsilon}
W^{\varepsilon}\ast\Delta G\\
&+\varepsilon^{3}\nabla\cdot
\bigg\{
\nabla\psi_{\varepsilon}\otimes
\big(\phi^{\varepsilon}\ast_2\partial G\big)
\bigg\}
+\varepsilon^3
\nabla\big(\phi^{\varepsilon}\ast_2\partial G\big)\nabla\psi_{\varepsilon}
+\varepsilon^3\psi_{\varepsilon}\nabla\cdot
\Big(\phi^{\varepsilon}\ast_2\nabla\partial G\Big)\\
&+\nabla p_{\varepsilon}
-\nabla p_0-\varepsilon
\nabla\psi_{\varepsilon}
\pi^{\varepsilon}\ast_1 G-\psi_{\varepsilon}
(\nabla \pi)^{\varepsilon}\ast G
-\varepsilon\psi_{\varepsilon}\nabla G\ast\pi^{\varepsilon},
\end{aligned}
\end{equation*}
whose right-hand side further equals to
\begin{equation*}
\begin{aligned}
&\frac{\partial u_{\varepsilon}}{\partial t}
-\varepsilon^2\Delta u_{\varepsilon}+\nabla p_{\varepsilon}
-\nabla p_0-\psi_{\varepsilon}W^{\varepsilon}(\cdot,0)G+\xi_{\varepsilon}+\eta_{\varepsilon}
-\psi_{\varepsilon}\partial_tW^{\varepsilon}\ast G
+\psi_{\varepsilon}(\Delta W)^{\varepsilon}\ast G-\psi_{\varepsilon}(\nabla\pi)^{\varepsilon}\ast G\\
&-\varepsilon\psi_{\varepsilon}
\bigg[
\partial_t\phi^{\varepsilon}
\ast_2\partial G+\phi^{\varepsilon}(\cdot,0):\partial G
\bigg]
+2\varepsilon\psi_{\varepsilon}\big[(\partial W)^{\varepsilon}\ast_2\partial G\big]
-\varepsilon
\nabla\psi_{\varepsilon}
\pi^{\varepsilon}\ast_1 G
-\varepsilon\psi_{\varepsilon}\nabla G\ast\pi^{\varepsilon}\\
&+\varepsilon^2\psi_{\varepsilon}
\nabla\cdot
\Big(
(\nabla\phi)^{\varepsilon}\ast_2\partial G
\Big)
+\varepsilon^2\nabla\cdot
\bigg\{
\nabla\psi_{\varepsilon}\otimes \big(W^{\varepsilon}\ast G\big)
\bigg\}
+\varepsilon^2\nabla(W^{\varepsilon}\ast G)\nabla
\psi_{\varepsilon}
+\varepsilon^2
\psi_{\varepsilon}W^{\varepsilon}\ast\Delta G
-\varepsilon^2
\Delta\hat{\xi_{\varepsilon}}-\varepsilon^2\Delta\hat{\eta_{\varepsilon}}\\
&+\varepsilon^3\nabla
\big(\phi^{\varepsilon}\ast_2\partial G\big)\nabla\psi_{\varepsilon}+
\varepsilon^3\nabla\cdot\bigg\{
\nabla\psi_{\varepsilon}\otimes
\big(\phi^{\varepsilon}\ast_2\partial G\big)
\bigg\}
+\varepsilon^3\psi_{\varepsilon}
\nabla\cdot\Big(\phi^{\varepsilon}\ast_2\nabla\partial G\Big).
\end{aligned}
\end{equation*}

On account of the equations $\eqref{pde1.1}$ and $\eqref{pde2.1}$, respectively,
the above expression can be rewritten as, in terms of the order of the power of $\varepsilon$,
\begin{equation*}
\begin{aligned}
&f-\nabla p_0-\psi_{\varepsilon}W^{\varepsilon}(\cdot,0)G
+\xi_{\varepsilon}+\eta_{\varepsilon}\\
&-\varepsilon\psi_{\varepsilon}
\bigg[
\partial_t\phi^{\varepsilon}\ast_2\partial G+
\phi^{\varepsilon}(\cdot,0):\partial G
\bigg]+2\varepsilon\psi_{\varepsilon}\big[(\partial W)^{\varepsilon}\ast_2\partial G\big]
-\varepsilon
\nabla\psi_{\varepsilon}
\pi^{\varepsilon}\ast_1 G
-\varepsilon\psi_{\varepsilon}\nabla G\ast\pi^{\varepsilon}\\
&+\varepsilon^2\psi_{\varepsilon}
\nabla\cdot
\Big(
(\nabla\phi)^{\varepsilon}\ast_{2}\partial G
\Big)
+\varepsilon^2\nabla\cdot
\bigg\{
\nabla\psi_{\varepsilon}\otimes \big(W^{\varepsilon}\ast G\big)
\bigg\}
+\varepsilon^2\nabla\big(W^{\varepsilon}\ast G\big)\nabla
\psi_{\varepsilon}
+\varepsilon^2
\psi_{\varepsilon} W^{\varepsilon}\ast\Delta G
-\varepsilon^2
\Delta\hat{\xi_{\varepsilon}}-\varepsilon^2\Delta\hat{\eta_{\varepsilon}}\\
&+\varepsilon^3\nabla
\big(\phi^{\varepsilon}\ast_2\partial G\big)\nabla\psi_{\varepsilon}+
\varepsilon^3\nabla\cdot\bigg\{
\nabla\psi_{\varepsilon}\otimes
\big(\phi^{\varepsilon}\ast_2\partial G\big)
\bigg\}
+\varepsilon^3\psi_{\varepsilon}
\nabla\cdot\Big(\phi^{\varepsilon}\ast_2\nabla\partial G\Big)
:=I_1+\varepsilon I_2+\varepsilon^2 I_3+\varepsilon^3 I_4,
\end{aligned}
\end{equation*}
which have proved the first line of the equations $\eqref{pde3.13}$.

\medskip

\textbf{Part 2.} We now check the divergence-free condition of $w_{\varepsilon}$.
It follows that
\begin{equation*}
\nabla\cdot w_{\varepsilon}=\nabla\cdot u_{\varepsilon}-\nabla\cdot
\bigg\{
\psi_{\varepsilon}\big[W^{\varepsilon}\ast G+\varepsilon\phi^{\varepsilon}
\ast_2 \partial G\big]
\bigg\}
+\nabla\cdot\hat{\xi_{\varepsilon}}+\nabla\cdot\hat{\eta_{\varepsilon}},
\end{equation*}
and it suffices to show
\begin{equation}\label{f:4.6}
\nabla\cdot \bigg\{
\psi_{\varepsilon}[W^{\varepsilon}\ast G+\varepsilon\phi^{\varepsilon}\ast_2 \partial G]
\bigg\} = \nabla\cdot\hat{\xi_{\varepsilon}} + \nabla\cdot\hat{\eta_{\varepsilon}}.
\end{equation}

\medskip

By divergence-free condition of $\eqref{pde2.1}$ and the equations $\eqref{divc1*}$
that $\phi$ satisfies, we obtain that
\begin{equation*}
\begin{aligned}
&\quad\nabla\cdot
\bigg\{
\psi_{\varepsilon}[W^{\varepsilon}\ast G+\varepsilon\phi^{\varepsilon}\ast_2 \partial G]
\bigg\}\\
&=\nabla\psi_{\varepsilon}\cdot[W^{\varepsilon}\ast G+\varepsilon\phi^{\varepsilon}\ast_2\partial G]
+\psi_{\varepsilon}\nabla\cdot[W^{\varepsilon}\ast G+\varepsilon\phi^{\varepsilon}\ast_2\partial G]\\
&=\nabla\psi_{\varepsilon}\cdot[W^{\varepsilon}\ast G+\varepsilon\phi^{\varepsilon}\ast_2\partial G]
+\psi_{\varepsilon}\bigg[
\varepsilon^{-1}(\nabla\cdot W)^{\varepsilon}\ast_1 G+W^{\varepsilon}\ast_2\partial G
+(\nabla\cdot\phi)^{\varepsilon}\ast_2\partial G
+\varepsilon\phi^{\varepsilon}\ast_{3}\partial^2 G
\bigg]\\
&\overset{\eqref{divc1*}}{=}\nabla\psi_{\varepsilon}\cdot[W^{\varepsilon}\ast G+\varepsilon\phi^{\varepsilon}\ast_2\partial G]
+\psi_{\varepsilon}W^{\varepsilon}\ast_2\partial G
+\psi_{\varepsilon}
(\frac{A}{|Y_f|}-W^{\varepsilon}
)\ast_2\partial G
+\varepsilon\psi_{\varepsilon}
\phi^{\varepsilon}\ast_3\partial^2 G\\
&=
\nabla\psi_{\varepsilon}\cdot[W^{\varepsilon}\ast G+\varepsilon\phi^{\varepsilon}\ast_2\partial G]
+\psi_{\varepsilon}
\frac{A}{|Y_f|}\ast_2\partial G
+\varepsilon\psi_{\varepsilon}
\phi^{\varepsilon}\ast_3\partial^2G,
\end{aligned}
\end{equation*}
and this further implies
\begin{equation}\label{f:4.5}
\begin{aligned}
&\quad\nabla\cdot
\bigg\{
\psi_{\varepsilon}[W^{\varepsilon}\ast G+\varepsilon\phi^{\varepsilon}\ast_2 \partial G]
\bigg\}\\
&=\nabla\psi_{\varepsilon}\cdot\big[(W^{\varepsilon}
-A)\ast G\big]
+\nabla\psi_{\varepsilon}\cdot(A\ast G)
+\psi_{\varepsilon}
\frac{A}{|Y_f|}\ast_2\partial G
+\varepsilon\nabla\psi_{\varepsilon}
\cdot\big(\phi^{\varepsilon}\ast_2\partial G\big)
+\varepsilon\psi_{\varepsilon}
\phi^{\varepsilon}\ast_3\partial^2G.
\end{aligned}
\end{equation}

\medskip

Consequently, combining \eqref{pde3.8}, \eqref{divc2}, \eqref{divc3},
$\eqref{pde3.11}$
and $\eqref{f:4.5}$, we have proved
the equality $\eqref{f:4.6}$, which gives us
the divergence-free condition of $\eqref{pde3.13}$,
and ends the whole proof.
\end{proof}

\medskip
Without a proof, we state the following energy estimate.

\begin{lemma}[energy estimates]\label{lemma3.1}
Let $0<T<\infty$ and $d\geq 2$.
Suppose that
$\Omega\subset\mathbb{R}^d$ is a bounded Lipschitz domain, and
the perforated one $\Omega_\varepsilon$
satisfies the geometrical hypothesis $\eqref{RA}$.
Let $\psi_{\varepsilon}$ be the radial cut-off function defined in Lemma $\ref{lemma:cut-off}$.
Given $\Theta\in L^2(0,T;L^2(\Omega)^d)$ and $\Lambda$, $\Xi\in L^2(0,T;L^2(\Omega)^{d\times d})$, assume
that $u_{\varepsilon}\in L^2(0,T;H^1(\Omega_{\varepsilon})^d)$ is a weak solution of
\begin{equation}\label{pde3.14}
\left\{\begin{aligned}
\partial_t u_{\varepsilon}-\varepsilon^2\Delta u_{\varepsilon}+\nabla p_{\varepsilon}&=\Theta+\varepsilon\nabla\cdot \Lambda+
\varepsilon\psi_{\varepsilon}
\nabla\cdot \Xi &\qquad& \text{in}\quad \Omega_{\varepsilon}\times (0,T];\\
\nabla\cdot u_{\varepsilon}&=0 &\qquad&\text{in}\quad \Omega_{\varepsilon}\times (0,T];\\
u_{\varepsilon}&=0 &\qquad&\text{on}\quad
\partial\Omega_{\varepsilon}\times(0,T];\\
u_{\varepsilon}&=0 &\qquad&\text{in}
\quad\Omega_{\varepsilon}\times\{t=0\}.
\end{aligned}\right.
\end{equation}
Then, for any $\varepsilon>0$, one can derive that
\begin{equation}\label{pde3.7}
\begin{aligned}
\|u_{\varepsilon}\|_{L^{\infty}
(0,T;L^2(\Omega_{\varepsilon}))}
&+\varepsilon
\|\nabla u_{\varepsilon}\|_{L^{2}
(0,T;L^2(\Omega_{\varepsilon}))}\\
&\lesssim
\bigg\{\|\Theta\|_{L^2(0,T;L^2(\Omega_{\varepsilon}))}
+\|\Lambda\|_{L^2(0,T;L^2(\Omega_{\varepsilon}))}
+\|\Xi\|_{L^2(0,T;L^2(\text{supp}
(\psi_{\varepsilon})))}\bigg\},
\end{aligned}
\end{equation}
where the multiplicative constant
depends on $d$ and the characters of
$\Omega$ and $Y_s$, but independent of $\varepsilon$.
\end{lemma}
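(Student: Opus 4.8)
The plan is to run the classical energy method: test \eqref{pde3.14} with the solution $u_\varepsilon$ itself, note that the pressure pairs to zero, integrate the two stress‑type terms by parts, and absorb the resulting gradient contributions into the left‑hand side. Since $u_\varepsilon\in L^2(0,T;H^1_0(\Omega_\varepsilon)^d)$ is a weak solution and the data $\Theta+\varepsilon\nabla\cdot\Lambda+\varepsilon\psi_\varepsilon\nabla\cdot\Xi$ lie in $L^2(0,T;H^{-1}(\Omega_\varepsilon)^d)$, one has $\partial_t u_\varepsilon\in L^2(0,T;V')$ (with $V$ the divergence‑free subspace of $H^1_0(\Omega_\varepsilon)^d$), so $t\mapsto\|u_\varepsilon(\cdot,t)\|_{L^2(\Omega_\varepsilon)}^2$ is absolutely continuous and $u_\varepsilon$ is an admissible test function (see \cite{Temam79}). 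Because $\nabla\cdot u_\varepsilon=0$ and $u_\varepsilon$ vanishes on $\partial\Omega_\varepsilon$, the pressure term drops, and integrating by parts in the last two terms (no boundary contribution, since $u_\varepsilon$ and $\psi_\varepsilon u_\varepsilon$ both belong to $H^1_0(\Omega_\varepsilon)^d$) gives, for a.e. $t\in(0,T)$,
\[
\tfrac12\tfrac{d}{dt}\|u_\varepsilon\|_{L^2(\Omega_\varepsilon)}^2+\varepsilon^2\|\nabla u_\varepsilon\|_{L^2(\Omega_\varepsilon)}^2=\int_{\Omega_\varepsilon}\Theta\cdot u_\varepsilon-\varepsilon\int_{\Omega_\varepsilon}\Lambda:\nabla u_\varepsilon-\varepsilon\int_{\Omega_\varepsilon}\Xi:\nabla(\psi_\varepsilon u_\varepsilon).
\]

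Next I would bound the three terms. By Cauchy--Schwarz and Young, $\int_{\Omega_\varepsilon}\Theta\cdot u_\varepsilon\le\tfrac12\|\Theta\|_{L^2(\Omega_\varepsilon)}^2+\tfrac12\|u_\varepsilon\|_{L^2(\Omega_\varepsilon)}^2$ and $\varepsilon\big|\int_{\Omega_\varepsilon}\Lambda:\nabla u_\varepsilon\big|\le\tfrac{\varepsilon^2}{8}\|\nabla u_\varepsilon\|_{L^2(\Omega_\varepsilon)}^2+C\|\Lambda\|_{L^2(\Omega_\varepsilon)}^2$. Writing $\nabla(\psi_\varepsilon u_\varepsilon)=\psi_\varepsilon\nabla u_\varepsilon+\nabla\psi_\varepsilon\otimes u_\varepsilon$ splits the last term; its first piece satisfies $\varepsilon\big|\int_{\Omega_\varepsilon}\psi_\varepsilon\,\Xi:\nabla u_\varepsilon\big|\le\tfrac{\varepsilon^2}{8}\|\nabla u_\varepsilon\|_{L^2(\Omega_\varepsilon)}^2+C\|\Xi\|_{L^2(\text{supp}(\psi_\varepsilon))}^2$ because $0\le\psi_\varepsilon\le1$.

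\textbf{The main obstacle is the remaining boundary-layer piece} $\varepsilon\int_{\Omega_\varepsilon}\Xi:(\nabla\psi_\varepsilon\otimes u_\varepsilon)$. By Lemma~\ref{lemma:cut-off} one has $|\nabla\psi_\varepsilon|\lesssim\varepsilon^{-1}$ and $O_\varepsilon:=\text{supp}(\nabla\psi_\varepsilon)\subset\Omega\setminus\Sigma_{2\varepsilon}$, a layer of width $O(\varepsilon)$ along $\partial\Omega$, with $O_\varepsilon\subset\text{supp}(\psi_\varepsilon)$. The zero-extension of $u_\varepsilon$ lies in $H^1_0(\Omega)^d$, hence vanishes on $\partial\Omega$; integrating along normal segments to $\partial\Omega$ (well defined since $\partial\Omega\in C^3$) yields the thin-layer Poincaré inequality $\|u_\varepsilon\|_{L^2(O_\varepsilon)}\lesssim\varepsilon\|\nabla u_\varepsilon\|_{L^2(\Omega_\varepsilon)}$. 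Therefore
\[
\varepsilon\Big|\int_{\Omega_\varepsilon}\Xi:(\nabla\psi_\varepsilon\otimes u_\varepsilon)\Big|\lesssim\int_{O_\varepsilon}|\Xi|\,|u_\varepsilon|\le\|\Xi\|_{L^2(\text{supp}(\psi_\varepsilon))}\,\|u_\varepsilon\|_{L^2(O_\varepsilon)}\lesssim\varepsilon\,\|\Xi\|_{L^2(\text{supp}(\psi_\varepsilon))}\,\|\nabla u_\varepsilon\|_{L^2(\Omega_\varepsilon)},
\]
which is again $\le\tfrac{\varepsilon^2}{8}\|\nabla u_\varepsilon\|_{L^2(\Omega_\varepsilon)}^2+C\|\Xi\|_{L^2(\text{supp}(\psi_\varepsilon))}^2$. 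Here the factor $\varepsilon$ in front of the stress term in \eqref{pde3.14} exactly balances the $\varepsilon^{-1}$ from $|\nabla\psi_\varepsilon|$ against the $\varepsilon$ gained by the thin-layer Poincaré inequality; this cancellation is the structural reason $\Xi$ may be measured only on $\text{supp}(\psi_\varepsilon)$, and it is the step most sensitive to the geometry of the cut-off.

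Collecting the four bounds and absorbing $\tfrac{3\varepsilon^2}{8}\|\nabla u_\varepsilon\|^2_{L^2(\Omega_\varepsilon)}$ into the left-hand side, we obtain
\[
\tfrac{d}{dt}\|u_\varepsilon\|_{L^2(\Omega_\varepsilon)}^2+\varepsilon^2\|\nabla u_\varepsilon\|_{L^2(\Omega_\varepsilon)}^2\le\|u_\varepsilon\|_{L^2(\Omega_\varepsilon)}^2+C\Big(\|\Theta\|_{L^2(\Omega_\varepsilon)}^2+\|\Lambda\|_{L^2(\Omega_\varepsilon)}^2+\|\Xi\|_{L^2(\text{supp}(\psi_\varepsilon))}^2\Big).
\]
Since $u_\varepsilon|_{t=0}=0$, Grönwall's inequality on $[0,T]$ bounds $\|u_\varepsilon\|_{L^\infty(0,T;L^2(\Omega_\varepsilon))}^2$ by $e^{T}$ times the time integral of the right-hand data; substituting this back and integrating the differential inequality once more over $(0,T)$ controls $\varepsilon^2\|\nabla u_\varepsilon\|_{L^2(0,T;L^2(\Omega_\varepsilon))}^2$ by the same quantity. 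Taking square roots yields \eqref{pde3.7}, with a constant depending only on $T$ and on $\partial\Omega$ (through the thin-layer Poincaré constant), but not on $\varepsilon$; everything except the boundary-layer term above is routine.
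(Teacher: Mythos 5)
The paper states this lemma without proof (deferring to the standard energy method of \cite{Temam79}), and your argument is exactly the intended one: test with $u_\varepsilon$, kill the pressure by incompressibility, integrate the stress terms by parts, and absorb via Young and Gr\"onwall. You correctly identified and resolved the only non-routine point — the term $\varepsilon\int\Xi:(\nabla\psi_\varepsilon\otimes u_\varepsilon)$, where $|\nabla\psi_\varepsilon|\lesssim\varepsilon^{-1}$ is compensated by the Poincar\'e inequality giving $\|u_\varepsilon\|_{L^2(O_\varepsilon)}\lesssim\varepsilon\|\nabla u_\varepsilon\|_{L^2(\Omega_\varepsilon)}$ (your thin-layer version works, as would the global perforated-domain version in Theorem \ref{thm4}) — so the proof is correct.
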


\subsection{\centering Some auxiliary results}\label{subsec:5.1}

\begin{lemma}\label{lemma3.2}
Let $0<\varepsilon\ll 1$ and $\psi_\varepsilon$ be defined in Lemma $\ref{lemma:cut-off}$.
Given $f\in L^2(0,T;C^{1,1/2}(\bar{\Omega})^d)$, suppose that
$p_0$ is associated with $f$ by the equation \eqref{pde1.2}.
Let $F=f-\nabla p_0$, and
\begin{equation*}
G(x)=S_{\delta}(\varphi_\varepsilon F)(x) = \int_{\mathbb{R}^d}
dy\zeta_\delta(x-y)(\varphi_\varepsilon F)(y),
\qquad \zeta_\delta = \delta^{-d}\zeta(\cdot/\delta),
\end{equation*}
with $\delta = \varepsilon/4$, where $\zeta\in C_0^\infty(B(0,1/2))$
satisfies $\zeta\geq 0$ and $\int_{\mathbb{R}^d}\zeta =1$
(see also in Subsection $\ref{notation}$).
Then, there hold the following estimates
\begin{equation}\label{pde3.1}
\begin{aligned}
\|G-F\|_{L^2(0,T;
L^{\infty}(\text{supp}(\psi_{\varepsilon})))}
\lesssim \varepsilon
\|f\|_{L^2(0,T;C^{1,1/2}(\bar{\Omega}))};\\
\|\nabla(G-F)\|_{
L^2(0,T;L^{\infty}(\text{supp}(\psi_{\varepsilon})))}
\lesssim \varepsilon^{1/2}
\|f\|_{L^2(0,T;C^{1,1/2}(\bar{\Omega}))},
\end{aligned}
\end{equation}
and
\begin{equation}\label{pde3.4}
\begin{aligned}
\|G\|_{L^{2}(0,T;L^{\infty}(\Omega))}+
\|\nabla G\|_{L^{2}(0,T;L^{\infty}
(\text{supp}(\psi_{\varepsilon})))}
&\lesssim\|f\|_{L^{2}(0,T;C^{1,1/2}
(\bar{\Omega}))};\\
\|\nabla^2 G\|_{L^2(0,T;L^{\infty}(\text{supp}(\psi_{\varepsilon})))}
&\lesssim\varepsilon^{-1/2}\|f\|_{L^{2}(0,T;C^{1,1/2}
(\bar{\Omega}))}.
\end{aligned}
\end{equation}
where the multiplicative constant depends only on $d$ and $\zeta$.
\end{lemma}

\begin{remark}
\emph{Once the estimate $\eqref{pri:3.1}$ is established,
a routine computation leads to the desired estimates $\eqref{pde3.1}$
and $\eqref{pde3.4}$. We mention that the cut-off function $\varphi_\varepsilon$ in $G$ is not so crucial as
$\psi_\varepsilon$, and the purpose of introducing $G$ is to weaken the assumption on the smoothness of $f$. (See e.g. \cite{Shen18,Xu16}.)}
\end{remark}

\subsection{\centering Error estimate on velocity}\label{subsec:5.2}

\noindent
\textbf{The proof of the estimate $\eqref{es1.1}$.}
Based on Lemma $\ref{lemma:4.3}$, the first two steps listed in Subsection $\ref{subsec:1.4}$ have been completed. Now, we will show a detailed proof for Step 3 therein.
The main task in this step will be reduced to estimating
each term presented in $\eqref{notation:5.2}$
according to the form of the right-hand side of the equations
$\eqref{pde3.14}$ given in Lemma $\ref{lemma3.1}$. Therefore, we mainly
divide this step into four sub-steps.

\textbf{Step 3-1.}
We start from the first line of $\eqref{notation:5.2}$ that
\begin{equation*}
I_1=f-\nabla p_0-\psi_{\varepsilon}W^{\varepsilon}(\cdot,0)G
+\xi_{\varepsilon}+\eta_{\varepsilon},
\end{equation*}
and we need to estimate these quantities:
$\|f-\nabla p_0-\psi_{\varepsilon}G\|_{
L^2(\Omega_{\varepsilon,T})}$,
$\|\xi_{\varepsilon}\|_{L^2(\Omega_{\varepsilon,T})}
,\|\eta_{\varepsilon}\|_{L^2(\Omega_{\varepsilon,T})}$,
where the last two terms had been done by Propositions
$\ref{lemma3.6-1}$ and $\ref{lemma3.6-2}$, respectively.
Note that $W^\varepsilon(\cdot,0)$ is the identity matrix.
For the first one, we have
\begin{equation*}
\begin{aligned}
\|f-\nabla p_0-\psi_{\varepsilon}G\|_{L^2(\Omega_{\varepsilon,T})}
&\leq
\|F-\psi_{\varepsilon}G+\psi_{\varepsilon}F
-\psi_{\varepsilon}F
\|_{L^2(\Omega_{\varepsilon,T})}\\
&\leq \|\psi_{\varepsilon}(G-F)\|_{
L^2(\Omega_{\varepsilon,T})}+
\|(1-\psi_{\varepsilon})F\|_{
L^2(\Omega_{\varepsilon,T})}\\
&\lesssim \|G-F\|_{L^2(0,T;L^2(\text{supp}(\psi_{\varepsilon})))}+\|F\|_{
L^2(0,T;L^2(O_{\varepsilon}))}.
\end{aligned}
\end{equation*}
Hence, it follows that
\begin{equation}\label{pde5.3}
\begin{aligned}
\|I_1\|_{L^2(\Omega_{\varepsilon,T})}
&\lesssim \|G-F\|_{L^2(0,T;L^2(\text{supp}(\psi_{\varepsilon})))}+\|F\|_{
L^2(0,T;L^2(O_{\varepsilon}))}+\|\xi_{\varepsilon}\|_{L^2(\Omega_{\varepsilon,T})}
+\|\eta_{\varepsilon}\|_{L^2(\Omega_{\varepsilon,T})}\\
&\overset{\eqref{pde3.1},\eqref{pri:3.1},\eqref{pri:4.1},
\eqref{pri:5.2}}{\lesssim}
\varepsilon^{1/2}\|f\|_{L^2(0,T;C^{1,1/2}(\bar{\Omega}))}.
\end{aligned}
\end{equation}

\textbf{Step 3-2.} Recalling the second line of $\eqref{notation:5.2}$, we have
\begin{equation*}
\begin{aligned}
\varepsilon I_2=&-\varepsilon\psi_{\varepsilon}
\bigg[
\partial_t\phi^{\varepsilon}\ast_2\partial G+
\phi^{\varepsilon}(\cdot,0):\partial G
\bigg]
+2\varepsilon\psi_{\varepsilon}(\partial W)^{\varepsilon}\ast_2\partial G
-
\varepsilon\nabla\psi_{\varepsilon}
\pi^{\varepsilon}\ast_1 G
-\varepsilon\psi_{\varepsilon}\nabla G\ast\pi^{\varepsilon}\\
&:=I_{21}+I_{22}+I_{23}+I_{24}+I_{25}.
\end{aligned}
\end{equation*}
By using Minkowski's inequality and Young's inequality, and then the periodicity
of $W$ and $\phi$, we obtain that
\begin{equation}\label{pde5.2}
\begin{aligned}
\int_{\Omega_{\varepsilon,T}}|I_{21}+I_{23}|^2
&\lesssim\varepsilon^2
\int_{0}^{T}\int_{\Omega_{\varepsilon}}\psi^2_{\varepsilon}
| \partial_t\phi^{\varepsilon} \ast_2 \partial G|^2 +
\varepsilon^2
\int_{0}^{T}\int_{\Omega_{\varepsilon}}\psi^2_{\varepsilon}
|(\partial W)^{\varepsilon}\ast_2 \partial G|^2 \\
&\lesssim\varepsilon^2
\bigg\{\|\partial_t\phi\|^2_{L^1(0,T;L^2(Y_f))}+
\|\nabla W\|^2_{L^1(0,T;L^2(Y_f))}\bigg\}\|\nabla G\|^2_{L^2(0,T;L^{\infty}
(\text{supp}(\psi_{\varepsilon})))}.
\end{aligned}
\end{equation}
By H\"older's inequality and the periodicity of $\phi$, we have
\begin{equation*}
\int_{\Omega_{\varepsilon,T}}|I_{22}|^2
  =\varepsilon^2\int_{0}^{T}dt\int_{\Omega_{\varepsilon}}
  |\psi_{\varepsilon}
  \phi^{\varepsilon}(\cdot,0):\partial G(\cdot,t)|^2 \lesssim\varepsilon^2
  \|\phi(\cdot,0)\|^2_{L^2(Y_f)}\|\nabla G\|^2_{L^2(0,T;L^{\infty}
  (\text{supp}(\psi_{\varepsilon}))}.
\end{equation*}
In view of Lemma $\ref{lemma:cut-off}$, employing a similar computation as
given for $\eqref{pde5.2}$, we obtain that
\begin{equation*}
\begin{aligned}
\int_{\Omega_{\varepsilon,T}}|I_{24}|^2
&=\varepsilon^2\int_{0}^{T}\int_{\Omega_{\varepsilon}}|
\nabla\psi_{\varepsilon}
\pi^{\varepsilon}\ast_1 G|^2 \\
&\lesssim
\int_{0}^{T}\int_{O_{\varepsilon}}|\pi^{\varepsilon
}\ast_1 G|^2
\lesssim
\varepsilon\|\pi\|^2_{L^1(0,T;L^2(Y_f))}
\|G\|^2_{L^2(0,T;L^{\infty}
(\text{supp}(\psi_{\varepsilon})))},
\end{aligned}
\end{equation*}
and
\begin{equation*}
\begin{aligned}
\int_{\Omega_{\varepsilon,T}}|I_{25}|^2
&=\varepsilon^2\int_{0}^{T}\int_{\Omega_{\varepsilon}}|
\psi_{\varepsilon}
\nabla G \ast \pi^{\varepsilon}|^2
&\lesssim&
\varepsilon^2\|\pi\|^2_{L^1(0,T;L^2(Y_f))}
\|\nabla G\|^2_{L^2(0,T;L^{\infty}
(\text{supp}(\psi_{\varepsilon})))}.
\end{aligned}
\end{equation*}
Therefore, collecting all the estimates of
$I_{21},\cdots, I_{25}$, there holds
\begin{equation}\label{pde5.4}
\begin{aligned}
\|\varepsilon
I_2\|_{L^2(\Omega_{\varepsilon,T})}
& \lesssim
\varepsilon
\bigg\{\|\partial_t\phi\|_{L^1(0,T;L^2(Y_f))}+
\|\nabla W\|_{L^1(0,T;L^2(Y_f))}
+\|\phi(\cdot,0)\|_{L^2(Y_f)}\bigg\}\|\nabla G\|_{L^2(0,T;L^{\infty}
(\text{supp}(\psi_{\varepsilon})))}\\
&\qquad+\varepsilon^{\frac{1}{2}}
\|\pi\|_{L^1(0,T;L^2(Y_f))}
\bigg\{\|G\|_{L^2(0,T;L^{\infty}
(\text{supp}(\psi_{\varepsilon})))}
+\varepsilon^{\frac{1}{2}}\|\nabla G\|_{L^2(0,T;L^{\infty}
(\text{supp}(\psi_{\varepsilon})))}
\bigg\}\\
&\qquad\qquad\qquad
\overset{\eqref{pde2.14},\eqref{pri:4.2},\eqref{pde3.33b},\eqref{pde3.4}}{\lesssim}
\varepsilon^{1/2}\|f\|_{L^2(0,T;C^{1,1/2}(\bar{\Omega}))},
\end{aligned}
\end{equation}
where it is fine to assume $\int_{Y_f}\pi(\cdot,t)=0$ 
for $t>0$, since $\pi\in L^1(0,T;L^2(Y_f)/\mathbb{R})$
due to the estimate $\eqref{pde3.33a}$.

\textbf{Step 3-3.} We now turn to the term $\varepsilon^2I_3$.
By the expression of $I_3$ in $\eqref{notation:5.2}$, we can rewrite it as follows:
\begin{equation*}
\begin{aligned}
\varepsilon^2 I_3
&=
\varepsilon^2\psi_{\varepsilon} W^{\varepsilon}\ast\Delta G
+\varepsilon^2\nabla\big(W^{\varepsilon}\ast G\big)\nabla
\psi_{\varepsilon}\\
&+\varepsilon^2\nabla\cdot
\bigg\{
\nabla\psi_{\varepsilon}\otimes \big(W^{\varepsilon}\ast G\big)
-\nabla\hat{\xi_{\varepsilon}}-\nabla\hat{\eta_{\varepsilon}}\bigg\}
+\varepsilon^2\psi_{\varepsilon}
\nabla\cdot
\Big\{
(\nabla\phi)^{\varepsilon}\ast_2\partial G
\Big\}\\
&:=I_{31}+I_{32}+\varepsilon \nabla\cdot I_{33}
+\varepsilon \psi_{\varepsilon}
\nabla\cdot I_{34}.
\end{aligned}
\end{equation*}
Then, in view of Lemma $\ref{lemma3.1}$, we need to estimate
$I_{31},I_{32},I_{33}$ and $I_{34}$ according to the related form in
the estimate $\eqref{pde3.7}$. We begin with $I_{34}$, and
by using Minkowski's inequality, Young's inequality, and
the periodicity of $\phi$, there holds
\begin{equation*}
\begin{aligned}
\int_{0}^{T}\int_{\text{supp}(\psi_{\varepsilon})}
|I_{34}|^2
&\lesssim \varepsilon^2
\int_{0}^{T}\int_{\text{supp}(\psi_{\varepsilon})}
|(\nabla \phi)^{\varepsilon}\ast_2\partial G|^2
\lesssim \varepsilon^2
\|\nabla\phi\|^2_{L^1(0,T;L^2(Y_f))}\|\nabla G\|^2_{L^2(0,T;L^{\infty}
(\text{supp}(\psi_{\varepsilon})))}.
\end{aligned}
\end{equation*}
We proceed to handle the term $I_{33}$ in the region $\Omega_{\varepsilon,T}$, and a similar computation as given above leads to
\begin{equation*}
\begin{aligned}
\int_{\Omega_{\varepsilon,T}}|I_{33}|^2&
\leq \varepsilon^2\int_{0}^{T}\int_{\Omega_{\varepsilon}}
\big(|\nabla\psi_{\varepsilon}\otimes W_{\varepsilon}\ast G|^2+|\nabla\hat{\xi_{\varepsilon}}|^2
+|\nabla\hat{\eta_{\varepsilon}}|^2\big)\\
&\lesssim \int_{0}^{T}
\int_{O_{\varepsilon}}|W_{\varepsilon}\ast G|^2
+\varepsilon^2\int_{0}^{T}\int_{\Omega_{\varepsilon}}
\big(|\nabla\hat{\xi_{\varepsilon}}|^2
+|\nabla\hat{\eta_{\varepsilon}}|^2\big) \\
&\lesssim\varepsilon
\|W\|^2_{L^1(0,T;L^2(Y_f))}\|G\|^2_{L^2(0,T;
L^{\infty}
(\text{supp}(\psi_{\varepsilon})))}+
\varepsilon^2\|\nabla\hat{\xi_{\varepsilon}}\|^2_{
L^2(\Omega_{\varepsilon,T})}
+\varepsilon^2\|\nabla\hat{\eta_{\varepsilon}}\|^2_{
L^2(\Omega_{\varepsilon,T})}.
\end{aligned}
\end{equation*}
By the same token, we obtain that
\begin{equation*}
\begin{aligned}
\int_{\Omega_{\varepsilon,T}}
|I_{31}|^2&
\lesssim\varepsilon^4
\int_{0}^{T}\int_{\text{supp}(\psi_{\varepsilon})}
|W^{\varepsilon}\ast\Delta G|^2
\lesssim\varepsilon^4\|W\|^2_{L^1(0,T;L^2(Y_f))}
\|\nabla^2 G\|^2_{L^2(0,T;L^{\infty}
(\text{supp}(\psi_{\varepsilon})))},
\end{aligned}
\end{equation*}
and
\begin{equation*}
\begin{aligned}
\int_{\Omega_{\varepsilon,T}}|I_{32}|^2
&
\lesssim\varepsilon^2\int_{0}^{T}\int_{O_{\varepsilon}}
|\nabla\big(W^{\varepsilon}\ast G\big)|^2 \\
&\lesssim\varepsilon
\|\nabla W\|^2_{L^1(0,T;L^2(Y_f))}
\|G\|^2_{L^2(0,T;L^{\infty}
(\text{supp}(\psi_{\varepsilon})))}
+\varepsilon^3\|W\|^2_{L^1(0,T;L^2(Y_f))}
\|\nabla G\|^2_{L^2(0,T;L^{\infty}
(\text{supp}(\psi_{\varepsilon})))}.
\end{aligned}
\end{equation*}
As a result, we have established the following estimate
\begin{equation}\label{pde5.5}
\begin{aligned}
&\|I_{31}+I_{32}+I_{33}\|_{L^2(\Omega_{\varepsilon,T})}
+\|I_{34}\|_{L^2(0,T;L^2(\text{supp}
(\psi_{\varepsilon})))}\\
&\qquad\lesssim
\varepsilon^{1/2}
\bigg\{\|W\|_{L^1(0,T;L^2(Y_f))}
+\|\nabla W\|_{L^1(0,T;L^2(Y_f))}
\bigg\}
\|G\|_{L^2(0,T;L^{\infty}
(\text{supp}(\psi_{\varepsilon})))}\\
&\qquad\quad\quad+\varepsilon\bigg\{
\varepsilon^{1/2}\|W\|_{L^1(0,T;L^2(Y_f))}
+\|\nabla\phi\|_{L^1(0,T;L^2(Y_f))}\bigg\}
\|\nabla G\|_{L^2(0,T;L^{\infty}
(\text{supp}(\psi_{\varepsilon})))}\\
&\qquad\qquad\quad\quad+\varepsilon^2\|W\|_{L^1(0,T;L^2(Y_f))}
\|\nabla^2 G\|_{L^2(0,T;
L^{\infty}(\text{supp}(\psi_{\varepsilon})))}
+\varepsilon\|\nabla\hat{\xi_{\varepsilon}}\|_{
L^2(\Omega_{\varepsilon,T})}
+\varepsilon\|\nabla\hat{\eta_{\varepsilon}}\|_{
L^2(\Omega_{\varepsilon,T})}\\
&\qquad\qquad\qquad\qquad\quad
\overset{\eqref{pde2.14},\eqref{pri:4.2},\eqref{pde3.4},
\eqref{pri:5.3a},\eqref{pri:5.3b}}{\lesssim}
\varepsilon^{1/2}\|f\|_{L^2(0,T;C^{1,1/2}(\bar{\Omega}))}.
\end{aligned}
\end{equation}

\textbf{Step 3-4.} Recall the last line of $\eqref{notation:5.2}$, and
we rewrite it as follows:
\begin{equation*}
\begin{aligned}
\varepsilon^3 I_4
&=
\varepsilon^3\nabla
\big(\phi^{\varepsilon}\ast_2\partial G\big)\nabla\psi_{\varepsilon}
+\varepsilon^3
\nabla\cdot\bigg\{
\nabla\psi_{\varepsilon}\otimes
\big(\phi^{\varepsilon}\ast_2\partial G\big)
\bigg\}
+\varepsilon^3\psi_{\varepsilon}
\nabla\cdot\bigg\{\phi^{\varepsilon}\ast_2\nabla\partial G\bigg\}\\
&=: I_{41}+\varepsilon \nabla\cdot I_{42}+\varepsilon \psi_{\varepsilon}\nabla\cdot
I_{43}.
\end{aligned}
\end{equation*}
By using Minkowski's inequality and Young's inequality, as well as,
the periodicity of $\phi$, we derive that
\begin{equation*}
\begin{aligned}
\int_{\Omega_{\varepsilon,T}}|I_{41}|^2
&\leq\varepsilon^4
\int_{0}^{T}\int_{O_{\varepsilon}}|\nabla
\big(\phi^{\varepsilon}\ast_2\partial G\big)|^2
\lesssim\varepsilon^4
\int_{0}^{T}\int_{O_{\varepsilon}}|\phi^{\varepsilon}\ast_2\nabla\partial G|^2
+\varepsilon^2
\int_{0}^{T}\int_{O_{\varepsilon}}
|(\nabla\phi)^{\varepsilon}\ast_2\partial G|^2 \\
&\lesssim\varepsilon^5
\|\phi\|^2_{L^1(0,T;L^2(Y_f))}\|\nabla^2 G\|^2_{L^2(0,T;L^{\infty}
(\text{supp}(\psi_{\varepsilon})))}
+\varepsilon^3
\|\nabla\phi\|^2_{L^1(0,T;L^2(Y_f))}\|\nabla G\|^2_{L^2(0,T;L^{\infty}
(\text{supp}(\psi_{\varepsilon})))}.\\
\end{aligned}
\end{equation*}
By an analogous argument employed in Step 3-3, there holds
\begin{equation*}
\begin{aligned}
\int_{\Omega_{\varepsilon,T}}|I_{42}|^2
&=\varepsilon^4\int_{0}^{T}\int_{\Omega_{\varepsilon}}
|\nabla\psi_{\varepsilon}\otimes
\phi^{\varepsilon}\ast_2\partial G|^2
\lesssim\varepsilon^{2}\int_{0}^{T}\int_{O_{\varepsilon}}|
\phi^{\varepsilon}\ast_2\partial G|^2\\
&\lesssim\varepsilon^{3}
\|\phi\|^2_{L^1(0,T;L^2(Y_f))}\|\nabla G\|^2_{L^2(0,T;L^{\infty}
(\text{supp}(\psi_{\varepsilon})))},
\end{aligned}
\end{equation*}
and
\begin{equation*}
\begin{aligned}
\int_{0}^{T}\int_{\text{supp}(\psi_{\varepsilon})}|I_{43}|^2
&=\varepsilon^4\int_{0}^{T}\int_{\text{supp}(\psi_\varepsilon)}
|\phi^{\varepsilon}\ast_2\nabla\partial G|^2dxdt
\lesssim \varepsilon^4\|\phi\|^2_{L^1(0,T;L^2(Y_f))}\|\nabla^2 G\|^2_{L^2(0,T;L^{\infty}
(\text{supp}(\psi_{\varepsilon})))}.
\end{aligned}
\end{equation*}
We now collect the above estimates and obtain
\begin{equation}\label{pde5.6}
\begin{aligned}
&\|I_{41}\|_{L^2(\Omega_{\varepsilon,T})}
+\|I_{42}\|_{L^2(\Omega_{\varepsilon,T})}
+\|I_{43}\|_{L^2(0,T;L^2(\text{supp}
(\psi_{\varepsilon})))}\\
&\qquad\qquad\lesssim
\varepsilon^{2}
\|\phi\|_{L^1(0,T;L^2(Y_f))}\|\nabla^2 G\|^2_{L^2(0,T;L^{\infty}
(\text{supp}(\psi_{\varepsilon})))}\\
&\qquad\qquad\qquad\qquad+\varepsilon^{3/2}
\bigg\{\|\phi\|_{L^1(0,T;L^2(Y_f))}+
\|\nabla\phi\|_{L^1(0,T;L^2(Y_f))}
\bigg\}
\|\nabla G\|_{L^2(0,T;L^{\infty}
(\text{supp}(\psi_{\varepsilon})))}\\
&\qquad\qquad\qquad\qquad\qquad\qquad
\overset{{\eqref{pri:4.2}, \eqref{pde3.4}}}{\lesssim}
\varepsilon^{3/2}\|f\|_{L^2(0,T;C^{1,1/2}(\bar{\Omega}))}.
\end{aligned}
\end{equation}

\medskip

In the end, in view of the equations $\eqref{pde3.13}$, we can rewrite its right-hand side
as
\begin{equation}\label{f:6.1}
I_1+\varepsilon I_2 + \varepsilon^2I_3
+\varepsilon^3 I_4
= \underbrace{I_1 + \varepsilon I_2 + I_{31}+I_{32} + I_{41}}_{\Theta}
+\varepsilon\nabla\cdot\underbrace{\big(I_{33}+I_{42}\big)}_{\Lambda}
+\varepsilon\psi_\varepsilon\nabla\cdot
\underbrace{\big(I_{34}+I_{43}\big)}_{\Xi},
\end{equation}
which coincides with the form of the counterpart of
the equations $\eqref{pde3.14}$. Thus, appealing to the estimate
$\eqref{pde3.7}$, together with  \eqref{pde5.3}, \eqref{pde5.4}, \eqref{pde5.5} and \eqref{pde5.6},
we obtain that
\begin{equation}\label{k-10}
\varepsilon
\|\nabla w_{\varepsilon}\|_{L^{2}
(0,T;L^2(\Omega_{\varepsilon}))}
\lesssim \varepsilon^{1/2}\|f\|_{L^2(0,T;C^{1,1/2}(\bar{\Omega}))},
\end{equation}
which finally leads to the desired estimate \eqref{es1.1}, and complete all the proof.
\qed

\subsection{\centering Error estimate on pressure}\label{subsec:5.3}

\begin{remark}\label{remark:6.1}
\emph{Regarding the equation $\eqref{f:6.1}$,
if the right-hand side of $\eqref{pde3.13}$ is
considered in $L^2(0,T;H^{-1}(\Omega_\varepsilon))$,
it is generally not feasible to obtain a pressure term
in $L^2(\Omega_{\varepsilon,T})$-norm
(see e.g., \cite[Proposition 5]{Simon99}).
However, without aiming to derive smallness from
the right-hand side of $\eqref{pde3.13}$,
one can verify that it belongs to 
$L^2(\Omega_{\varepsilon,T})$-space.
Given the zero initial-boundary data for the equation $\eqref{pde3.13}$,
one can anticipate favorable regularity properties for the pressure term
(see e.g. \cite[Theorem 1]{Wolf18}).
Consequently, it remains reasonable to proceed with our analysis
of the pressure term through meticulous computation.}
\end{remark}

\noindent
\textbf{The proof of the estimate $\eqref{pri:1.3}$.}
As mentioned in Step 5 in Subsection $\ref{subsec:1.4}$, we are required to study the inertial term $\partial_t w_\varepsilon$, and then appeal to a duality argument coupled with the estimate on velocity to derive the desired estimate $\eqref{pri:1.3}$. Therefore, we divide the detailed proof
into two sub-steps.

\medskip

\textbf{Step 5-1.}  We first show the estimate on $\partial_t w_\varepsilon$ by claiming that there holds
\begin{equation}\label{f:6.2}
   \|\partial_t w_\varepsilon\|_{L^2(\Omega_{\varepsilon,T})}
   \lesssim \varepsilon^{1/2}\|f\|_{L^2(0,T;C^{1,1/2}(\bar\Omega))},
\end{equation}
(which have partially proved the estimate $\eqref{pri:1.3}$,) and then recall the estimates on the notations $\Theta, \Lambda$ and $\Xi$
presented in $\eqref{f:6.1}$, i.e.,
\begin{equation}\label{f:6.3}
\begin{aligned}
\|(\Theta,\Lambda,\psi_{\varepsilon}\Xi)\|_{L^2(\Omega_{\varepsilon,T})}
\lesssim \varepsilon^{1/2}\|f\|_{L^2(0,T;C^{1,1/2}(\bar\Omega))}.
\end{aligned}
\end{equation}

\medskip

Based upon the stated estimates $\eqref{f:6.2}$ and
$\eqref{f:6.3}$, we can show the estimate $\eqref{pri:1.3}$ on the pressure.
We start from taking any test function $H\in L^2(\Omega_\varepsilon)$ with
$\int_{\Omega_\varepsilon}H=0$, and constructing test function as follows:
\begin{equation}\label{pde:6.1}
\left\{\begin{aligned}
\nabla\cdot v_\varepsilon &=  H &\quad&\text{in}~\Omega_{\varepsilon};\\
v_\varepsilon &= 0 &\quad&\text{on}~\partial\Omega_{\varepsilon},
\end{aligned}\right.
\end{equation}
whose solution satisfies the estimate $\eqref{pde3.21}$.
In this regard, for any $\varrho\in L^2(0,T)$ and $c\in\mathbb{R}$, there holds
\begin{equation*}
\begin{aligned}
 \int_{\Omega_{\varepsilon,T}}
 \varrho(q_\varepsilon-c)
 &H
 \overset{\eqref{pde:6.1}}{=} -\int_{0}^{T}\varrho \int_{\Omega_{\varepsilon}}\nabla q_\varepsilon\cdot v_\varepsilon \\
 &\overset{\eqref{pde3.13},\eqref{f:6.1}}{=} \varepsilon
 \int_{0}^{T}\varrho\int_{\Omega_\varepsilon}
 (\varepsilon\nabla w_\varepsilon+\Lambda+\psi_\varepsilon\Xi):\nabla v_\varepsilon
 -\int_{0}^{T}\varrho\int_{\Omega_\varepsilon}
 (\Theta-\partial_tw_\varepsilon -\varepsilon\Xi\nabla\psi_\varepsilon)\cdot v_\varepsilon.
\end{aligned}
\end{equation*}

Applying Poincar\'e's inequality to $v_\varepsilon$, as well as, H\"older's inequality, we have
\begin{equation*}
\begin{aligned}
&\big|\int_{\Omega_{\varepsilon,T}}\varrho(q_\varepsilon-c)H\big|\\
&\lesssim \varepsilon\|\varrho\|_{L^2(0,T)}\|\nabla v_\varepsilon\|_{L^2(\Omega_\varepsilon)}
\Big\{\varepsilon\|\nabla w_\varepsilon\|_{L^2(\Omega_{\varepsilon,T})}
+\|\partial_t w_\varepsilon\|_{L^2(\Omega_{\varepsilon,T})}
+\|(\Theta,\Lambda,\psi_\varepsilon\Xi)\|_{L^2(\Omega_{\varepsilon,T})}\Big\},
\end{aligned}
\end{equation*}
and then it follows from the estimate $\eqref{pde3.21}$ and the duality argument that
\begin{equation*}
\begin{aligned}
\|q_\varepsilon-c\|_{L^2(\Omega_{\varepsilon,T})}
&\lesssim \Big\{\varepsilon\|\nabla w_\varepsilon\|_{L^2(\Omega_{\varepsilon,T})}
+\|\partial_t w_\varepsilon\|_{L^2(\Omega_{\varepsilon,T})}
+\|(\Theta,\Lambda,\psi_\varepsilon\Xi)\|_{L^2(\Omega_{\varepsilon,T})}\Big\}\\
&\overset{\eqref{es1.1},\eqref{f:6.2},\eqref{f:6.3}}{\lesssim} \varepsilon^{1/2}\|f\|_{L^2(0,T;C^{1,1/2}(\bar\Omega))}.
\end{aligned}
\end{equation*}

As a result, this
implies the desired estimate $\eqref{pri:1.3}$.

\medskip

\textbf{Step 5-2.} We now show the estimate $\eqref{f:6.2}$.
Taking $\partial_t w_\varepsilon$ as the test function on both sides of
$\eqref{pde3.13}$, we have
\begin{equation}\label{}
 \int_{\Omega_\varepsilon}|\partial_t w_\varepsilon|^2
 + \frac{\varepsilon^2}{2}\frac{d}{dt}\int_{\Omega_\varepsilon}|\nabla w_\varepsilon|^2
 \overset{\eqref{f:6.1}}{=} \int_{\Omega_\varepsilon}\Theta\cdot \partial_tw_\varepsilon
 -\varepsilon\int_{\Omega_\varepsilon}(\Lambda+\psi_\varepsilon\Xi)
 :\partial_t\nabla w_\varepsilon
 -\varepsilon\int_{\Omega_\varepsilon}
 \Xi\nabla\psi_\varepsilon\cdot\partial_t w_\varepsilon.
\end{equation}
Then, integrating both sides above from $0$ to $T$, as well as, integrating by parts with respect to the temporal variable, we can derive that
\begin{equation*}
\begin{aligned}
\int_{0}^{T}\int_{\Omega_\varepsilon}|\partial_t w_\varepsilon|^2
+ \frac{\varepsilon^2}{2}\int_{\Omega_\varepsilon}|\nabla w_\varepsilon(\cdot,T)|^2
&= \int_{0}^{T}\int_{\Omega_\varepsilon}
(\Theta - \varepsilon\Xi\nabla\psi_\varepsilon)\cdot\partial_t w_\varepsilon
+\varepsilon\int_{0}^{T}\int_{\Omega_\varepsilon}
(\partial_t\Lambda+\psi_\varepsilon\partial_t\Xi):\nabla w_\varepsilon \\
&-\varepsilon\int_{\Omega_\varepsilon}
(\Lambda+\psi_\varepsilon\Xi)(\cdot,T):\nabla w_\varepsilon(\cdot,T).
\end{aligned}
\end{equation*}
This together with Young's inequality implies
\begin{equation}\label{f:6.4}
\begin{aligned}
 \int_{\Omega_{\varepsilon,T}}
 |\partial_t w_\varepsilon|^2
 + \varepsilon^2\int_{\Omega_\varepsilon}|\nabla w_\varepsilon(\cdot,T)|^2
 &\lesssim \int_{\Omega_{\varepsilon,T}}
 \big(|\Theta|^2
 + |\psi_\varepsilon\Xi|^2 + |\varepsilon\nabla w_\varepsilon|^2\big)
 + \int_{\Omega_{\varepsilon,T}}
 \big(|\partial_t \Lambda|^2 +|\psi_\varepsilon\partial_t\Xi|^2\big)\\
 &+\int_{\Omega_\varepsilon}\big(|\Lambda(\cdot,T)|^2
 + |\Xi(\cdot,T)|^2\big) =: T_1 + T_2 + T_3.
\end{aligned}
\end{equation}

The relatively easy term is $T_1$, and it follows from the estimates
$\eqref{f:6.3}$ and $\eqref{k-10}$ that
\begin{equation}\label{f:6.5}
 \sqrt{T_1} \lesssim \varepsilon^{1/2}\|f\|_{L^2(0,T;C^{1,1/2}(\bar\Omega))}.
\end{equation}

Then, we turn to the second term $T_2$ by recalling the expression
$\eqref{f:6.1}$, i.e., $\partial_t \Lambda
= \partial_t I_{33} + \partial_t I_{42} $ with
\begin{equation*}
\left\{\begin{aligned}
&\partial_t I_{33}
= \varepsilon\nabla\psi_\varepsilon\otimes
\big((\partial_t W)^\varepsilon\ast G\big)
+ \varepsilon\nabla\psi_\varepsilon\otimes
\big(W^\varepsilon(\cdot,0)G(\cdot,t)\big)
-\varepsilon\nabla\xi_\varepsilon-\varepsilon\nabla\eta_\varepsilon;\\
&\partial_t I_{42}
= \varepsilon^2\nabla\psi_\varepsilon\otimes
\big((\partial_t\phi)^\varepsilon\ast_2\partial G\big)
+ \varepsilon^2\nabla\psi_\varepsilon\otimes
\big(\phi^\varepsilon(\cdot,0):\partial G(\cdot,t)\big);
\end{aligned}\right.
\end{equation*}
and $\partial_t\Xi = \partial_t I_{34}+\partial_tI_{43}$.
A routine computation as used in Subsection $\ref{subsec:5.2}$ leads to
\begin{equation*}
\begin{aligned}
\|\partial_t \Lambda\|_{L^2(\Omega_{\varepsilon,T})}
&\lesssim \varepsilon^{\frac{1}{2}}
\Big\{\|\partial_t W\|_{L^1(0,T;L^2(Y_f))}+1\Big\}
\|G\|_{L^2(0,T;L^\infty(\text{supp}(\psi_\varepsilon)))}
+\varepsilon\|\nabla\xi_\varepsilon
+\nabla\eta_\varepsilon\|_{L^2(\Omega_{\varepsilon,T})}\\
&+\varepsilon^{\frac{3}{2}}
\Big\{\|\partial_t \phi\|_{L^1(0,T;L^2(Y_f))}+\|\phi(\cdot,0)\|_{L^2(Y_f)}\Big\}
\|\nabla G\|_{L^2(0,T;L^\infty(\text{supp}(\psi_\varepsilon)))} \\
&\overset{\eqref{pde3.33b},\eqref{pri:4.2},\eqref{pri:4.1},
\eqref{pri:5.2},\eqref{pde3.4}}{\lesssim} \varepsilon^{1/2}\|f\|_{L^2(0,T;C^{1,1/2}(\bar\Omega))}.
\end{aligned}
\end{equation*}

By the same token, we have $\|\psi_\varepsilon\partial_t
\Xi\|_{L^2(\Omega_{\varepsilon,T})}
\lesssim \varepsilon \|f\|_{L^2(0,T;C^{1,1/2}(\bar\Omega))}$, and
we don't reproduce the proof here.
This coupled with the above estimate provides us with
\begin{equation}\label{f:6.6}
\sqrt{T_2} \lesssim \varepsilon^{1/2}\|f\|_{L^2(0,T;C^{1,1/2}(\bar\Omega))}.
\end{equation}

Finally, we turn to the term $T_3$.
According to the expression of $\eqref{f:6.1}$,
we merely compute the first term in
$\int_{\Omega_{\varepsilon}}|\Lambda(\cdot,T)|^2$ as an example, i.e.,
there holds
\begin{equation*}
\begin{aligned}
\varepsilon^2\int_{\Omega_{\varepsilon}}
|\nabla\psi_\varepsilon
\otimes W^\varepsilon\ast G(\cdot,T)|^2
&\lesssim \varepsilon
\Big(\int_{0}^{T}ds\|W(\cdot,T-s)\|_{L^2(Y_f)}
\|G(\cdot,s)\|_{L^\infty(\Omega)}\Big)^2\\
&\lesssim \varepsilon\|W\|_{L^1(0,T;L^2(Y_f))}^2\|G\|_{L^2(0,T;L^\infty(\Omega))}^2
\overset{\eqref{pde2.14},\eqref{pde3.4}}{\lesssim}\varepsilon \|f\|_{L^2(0,T;C^{1,1/2}(\bar\Omega))}^2.
\end{aligned}
\end{equation*}
and this together with similar computations given for the other terms in $T_3$ provides us with
\begin{equation}\label{f:6.7}
\sqrt{T_3} \lesssim \varepsilon^{\frac{1}{2}}\|f\|_{L^2(0,T;C^{1,1/2}(\bar\Omega))}.
\end{equation}

\medskip
Plugging the estimates $\eqref{f:6.5}$, $\eqref{f:6.6}$ and
$\eqref{f:6.7}$ back into $\eqref{f:6.4}$, we have proved the stated
estimate $\eqref{f:6.2}$, and we have completed the whole proof.
\qed


\medskip

\noindent
\textbf{Proof of Corollary $\ref{cor:2}$.}
Treating time variable as a parameter,
the idea is totally similar to that given in \cite[pp.22]{Shen20}, and
we provide a proof for reader's convenience. The key ingredient is modifying the value of the effective pressure given in $\eqref{pde1.2}$ as follows
\begin{equation*}
\tilde{p}_{0}(x,t):=
\left\{
\begin{aligned}
&p_{0}(x,t)&\quad&\text{if}\quad x\in\Omega_{\varepsilon},\\
&\dashint_{\varepsilon(Y_f+z_{k})}p_{0}(\cdot,t)&\quad&\text{if}
\quad x\in\varepsilon(Y_{s}+z_{k}) \quad\text{and}\quad
\varepsilon(Y+z_k)\subset\Omega \text{~for~some~}z_{k}\in \mathbb{Z}^d.
\end{aligned}\right.
\end{equation*}

By definition, it is not hard to derive that, for any $c\in\mathbb{R}$,
\begin{equation}\label{f:6.8}
\int_{0}^{T}\int_{\Omega\setminus\Omega_\varepsilon}
|\tilde{p}_\varepsilon - \tilde{p}_0-c|^2
\leq \frac{|Y_s|}{|Y_f|}\int_{0}^{T}\int_{\Omega_\varepsilon}|p_\varepsilon-p_0-c|^2
\overset{\eqref{pri:1.3}}{\lesssim}\varepsilon
\|f\|_{L^2(0,T;C^{1,1/2}(\bar\Omega))}^2.
\end{equation}
Moreover, by the smoothness of $p_0$ and Poincer\'e's inequality, we can obtain that
\begin{equation}\label{f:6.9}
\int_{0}^{T}
\int_{\Omega\setminus\Omega_\varepsilon}
|\tilde{p}_0 - p_0|^2
\lesssim \varepsilon^2\int_{0}^{T}\int_{\Omega}|\nabla p_0|^2
\overset{\eqref{pri:3.12}}{\lesssim}
\varepsilon^2\|f\|_{L^2(0,T;L^{2}(\Omega))}^2.
\end{equation}

Then, for any $c\in\mathbb{R}$, it is not hard to derive that
\begin{equation*}
\begin{aligned}
\int_{0}^{T}\int_{\Omega}|\tilde{p}_\varepsilon - p_0 - c|^2
&\lesssim \int_{0}^{T}\int_{\Omega}|\tilde{p}_\varepsilon - \tilde{p}_0 - c|^2
+ \int_{0}^{T}\int_{\Omega\setminus\Omega_\varepsilon}|\tilde{p}_0-p_0|^2 \\
&\lesssim
\int_{0}^{T}\int_{\Omega_\varepsilon}|p_\varepsilon - p_0 - c|^2
+ \int_{0}^{T}\int_{\Omega\setminus\Omega_\varepsilon}|\tilde{p}_\varepsilon - \tilde{p}_0-c|^2
+ \int_{0}^{T}\int_{\Omega\setminus\Omega_\varepsilon}|\tilde{p}_0-p_0|^2\\
&\overset{\eqref{pri:1.3},\eqref{f:6.8},\eqref{f:6.9}}{\lesssim}
\varepsilon \|f\|_{L^2(0,T;C^{1,1/2}(\bar\Omega))}^2.
\end{aligned}
\end{equation*}
Consequently, this implies the desired estimate $\eqref{pri:1.4}$ and
we have completed the proof.
\qed

\section{Appendix}\label{sec3*}

\noindent
The primary approach involves reformulating
the equations $\eqref{pde1.2}$ as a fixed-point problem, as presented in J.-L. Lions's work
\cite[pp.170]{Lions81}.
The key components are rooted in Schauder theory for elliptic
equations and refined corrector estimates.
Consequently, we can first establish
the short-time existence of solutions
by invoking the absolute continuity of integrals of
$\partial_t A$, and then extend the solution to
a finite time by induction arguments (see Lemma $\ref{L3.1}$ and $\ref{L3.2}$, respectively).

\begin{remark}
\emph{If replacing
H\"older's norm by Sobolev norm with respect to the spatial variable,
the results similar to $\eqref{pri:3.1}$ would be established by the
same arguments without any real difficulty, i.e.,
\begin{equation}\label{pri:3.12}
\|p_0\|_{L^q(0,T;H^{m+1}(\Omega))}\leq C
\|f\|_{L^q(0,T;H^{m}(\Omega))},
\qquad m\geq 0,
\end{equation}
where we regard $H^0(\Omega)$ as $L^2(\Omega)$.}
\end{remark}

\subsection{\centering Existence of short-time solution}


\begin{lemma}[properties of $A$ \cite{Allaire92,Lions81,Mikelic91,Sandrakov97}]\label{lemma2.5}
The homogenized coefficient $(A_{ij})_{1\leq i,j\leq d}$ which is defined by \eqref{pde2.2} is symmetric, positive defined and exponentially decay in time. Moreover, one can derive
$|\partial_tA|\in L^{1+\beta}(0,T)$ with
$0<\beta<(2/21)$ and $T>0$.
\end{lemma}

\begin{remark}
\emph{Concerned with $|\partial_tA|\in L^1(0,T)$, the proof can be found in
\cite[pp.127]{Sandrakov97} based upon Galerkin's methods, while
the stated result relies on the refined estimate $\eqref{pri:3.7}$.}
\end{remark}

\begin{lemma}[short-time solution]\label{L3.1}
Assume the same conditions as in Proposition $\ref{P:1}$.
There exists $0<\delta_0\ll 1$, depending on $\|\partial_tA\|_{L^1(0,T)}$, such
that the equation \eqref{pde1.2} possesses
the solution $p_0 \in L^q(0,\delta_0;C^{m+1,\alpha}(\bar{\Omega}))$, satisfying the estimate
\begin{equation}\label{pri:3.2}
\|p_0\|_{L^q(0,\delta_0;C^{m+1,\alpha}(\bar{\Omega}))}\lesssim
\|f\|_{L^q(0,\delta_0;C^{m,\alpha}(\bar{\Omega}))},
\end{equation}
where the multiplicative constant depends on $d$, $|Y_f|$, and $\Omega$.
\end{lemma}

\begin{proof}
In view of the divergence-free and boundary conditions of \eqref{pde1.2},
taking t-derivative on its both sides\footnote{To shorten the formula,
we use the notation $A'$ to represent $\partial_t A$ throughout the proof of Lemmas $\ref{L3.1}$
and $\ref{L3.2}$.}, we have
a new form of the equations \eqref{pde1.2}, i.e.,
\begin{equation}\label{rh1}
\left\{
\begin{aligned}
  \nabla\cdot A(0)\nabla p_0(\cdot,t) +
  \nabla\cdot
  \int_{0}^tds
  &A'(t-s)
  \nabla p_0(\cdot,s)
  =
  \nabla\cdot\Big[
  \int_{0}^tdsA'(t-s)f(\cdot,s)+A(0)f(\cdot,t)
  \Big]\quad  \text{in}~\Omega;\\
 \vec{n}\cdot A(0)\nabla p_0(\cdot,t) +
  \vec{n}\cdot
  \int_{0}^tds
  &A'(t-s)\nabla p_0(\cdot,s)
  =
  \vec{n}\cdot\Big[
  \int_{0}^tdsA'(t-s)f(\cdot,s)+A(0) f(\cdot,t)
  \Big]\quad \text{on}~\partial\Omega.
\end{aligned}\right.
\end{equation}
As the argument developed by J.-L. Lions \cite[pp.170]{Lions81}, for a.e. $t>0$, we introduce a function $\hat{p}(\cdot,t)$ as the solution of
\begin{equation}\label{pde:3.1}
\left\{\begin{aligned}
\nabla\cdot\Big[A(0)\nabla\hat{p}(\cdot,t)-\int_{0}^{t}ds
A'(t-s)\nabla p(\cdot,s)
\Big]&=0,&\quad &\text{in}~\Omega;\\
\vec{n}\cdot\Big[A(0)\nabla\hat{p}(\cdot,t)-\int_{0}^{t}ds
A'(t-s)\nabla p(\cdot,s)
\Big]&=0,&\quad &\text{on}~\partial\Omega,\\
\int_{\Omega}\hat{p}(\cdot,t)&=0.
\end{aligned}\right.
\end{equation}
Denote the operator $\nabla\cdot A(0)\nabla =|Y_f|\Delta$
by $\mathcal{L}$, and
\begin{equation}\label{f:3.1}
  K_0(p)(\cdot,t): = \int_{0}^{t}dsA'(t-s)\nabla p(\cdot,s);
  \qquad
  K_1(p): = \mathcal{L}^{-1}\nabla\cdot K_0(p).
\end{equation}
Then, the solution $\hat{p}$ of $\eqref{pde:3.1}$ can be represented by
$\hat{p}(\cdot,t) = K_1(p)(\cdot,t)$ in $\Omega$.
By Schauder estimates (see e.g.
\cite[pp.89]{Giaquinta-Martinazzi12}), there holds that
\begin{equation}\label{pde2.28}
\begin{aligned}
\|\hat{p}(\cdot,t)\|_{C^{2,1/2}(\bar\Omega)}&\leq
C_1
\bigg\|\int_{0}^{t}ds
A'(t-s)\nabla p(\cdot,s)\bigg\|_{C^{1,1/2}(\bar\Omega)}\\
&\leq C_1
\int_{0}^{t}ds
|A'(t-s)|\|\nabla p(\cdot,s)\|_{C^{1,1/2}(\bar\Omega)}
\leq C_1\int_{0}^{t}ds
|A'(t-s)|\| p(\cdot,s)\|_{C^{2,1/2}(\bar\Omega)},
\end{aligned}
\end{equation}
where $C_1$ depends on $|Y_f|$, $d$, and $\Omega$.
In view of $\eqref{f:3.1}$, we observe that $\mathcal{L}K_1(p) =
\nabla\cdot K_0(p)$, and therefore the equations \eqref{rh1} can be rewritten as
\begin{equation*}
\left\{
\begin{aligned}
  \mathcal{L}\Big[p_0(\cdot,t)+K_1(p_0)(\cdot,t)\Big]  &=\nabla\cdot\Big[
  \int_{0}^tdsA'(t-s)f(\cdot,s)+A(0)f(\cdot,t)
  \Big]&\quad &\text{in}~\Omega;\\
 \frac{\partial}{\partial\nu}\Big[p_0(\cdot,t)+K_1(p_0)(\cdot,t)\Big]&=
  \vec{n}\cdot\Big[
  \int_{0}^tdsA'(t-s)f(\cdot,s)+A(0)f(\cdot,t)
  \Big]&\quad &\text{on}~\partial\Omega,
\end{aligned}\right.
\end{equation*}
where the conormal derivative associated with $\mathcal{L}$
is defined by $\partial/\partial\nu:=\vec{n}\cdot A(0)\nabla$.
Similarly, its solution can be expressed by
\begin{equation}\label{rh3}
  \big(p_0+K_1(p_0)\big)(\cdot,t)=\mathcal{L}^{-1}
  \nabla\cdot\Big[
  \int_{0}^tdsA'(t-s)f(\cdot,s)+A(0)f(\cdot,t)
  \Big]=:\tilde{f}_1(\cdot,t).
\end{equation}

Thus, for some $T_1>0$ (which will be fixed later), if constructing the
following map:
\begin{equation}\label{pde:3.2}
 \mathcal{T}_1(p) :=  \tilde{f}_1 - K_1(p)
 \qquad \forall p\in L^q(0,T_1; C^{m+1,\alpha}(\bar\Omega)),
\end{equation}
the unique existence of the solution of $\eqref{rh1}$ in
$L^q(0,T_1; C^{m+1,\alpha}(\bar\Omega))$ is reduced to verifying that the map
$\mathcal{T}_1: L^q(0,T_1; C^{m+1,\alpha}(\bar\Omega))\to L^q(0,T_1; C^{m+1,\alpha}(\bar\Omega))$ is a strict contraction.

To see this, appealing to Schauder estimates again, we firstly derive that
\begin{equation}\label{rh5}
\|\tilde{f}_1(\cdot,t)\|_{C^{m+1,\alpha}(\bar\Omega)}\lesssim
\int_{0}^{t}ds|A'(t-s)|\|f(\cdot,s)\|_{C^{m,\alpha}(\bar\Omega)}
+|A(0)|\|f(\cdot,t)\|_{C^{m,\alpha}(\bar\Omega)}.
\end{equation}
By integrating both sides of $\eqref{pde2.28}$ and $\eqref{rh5}$ with respect to $t$ from
$0$ to $T_1$, Young's inequality, we obtain
\begin{equation}\label{f:3.2}
\begin{aligned}
\|K_1(p)\|_{L^q(0,T_1;C^{m+1,\alpha}(\bar\Omega))}
&\lesssim \|A'\|_{L^1(0,T_1)}
\|p\|_{L^q(0,T_1;C^{m+1,\alpha}(\bar\Omega))};\\
\|\tilde{f}_1\|_{L^q(0,T_1;C^{m+1,\alpha}(\bar\Omega))}
&\lesssim
\|A'\|_{L^1(0,T_1)}
\|f\|_{L^q(0,T_1;C^{m,\alpha}(\bar\Omega))}
  + \|f\|_{L^q(0,T_1;C^{m,\alpha}(\bar\Omega))}.
\end{aligned}
\end{equation}
Then, it follows from the estimates $\eqref{pde:3.2}$ and $\eqref{pde2.28}$ that
\begin{equation*}
\begin{aligned}
\|\mathcal{T}_1(p)(\cdot,t)\|_{C^{m+1,\alpha}(\bar\Omega)}
&\leq \|K_1(p)(\cdot,t)\|_{C^{m+1,\alpha}(\bar\Omega)}+
\|\tilde{f}_1(\cdot,t)\|_{C^{m+1,\alpha}(\bar\Omega)}\\
&\lesssim \int_{0}^{t}ds|A'(t-s)|
\big(\|p(\cdot,s)\|_{C^{m+1,\alpha}(\bar\Omega)}
+\|f(\cdot,s)\|_{C^{m,\alpha}(\bar\Omega)}\big)
+ \|f(\cdot,t)\|_{C^{m,\alpha}(\bar\Omega)},
\end{aligned}
\end{equation*}
and this implies
\begin{equation}\label{rh4}
\begin{aligned}
&\|\mathcal{T}_1(p)\|_{L^q(0,T_1;C^{m+1,\alpha}(\bar\Omega))} \\
&  \lesssim \|A'\|_{L^1(0,T_1)}
  \big(\|p\|_{L^q(0,T_1;C^{m+1,\alpha}(\bar\Omega))}
  +\|f\|_{L^q(0,T_1;C^{m,\alpha}(\bar\Omega))}\big)
  + \|f\|_{L^q(0,T_1;C^{m,\alpha}(\bar\Omega))}.
\end{aligned}
\end{equation}

Also, it is not hard to see that\footnote{By the definition,
the operator
$K_1$ is linear.}
\begin{equation}\label{f:3.6}
 \|\mathcal{T}_1(u)-\mathcal{T}_1(v)
 \|_{L^q(0,T_1;C^{m+1,\alpha}(\bar\Omega))}
 \overset{\eqref{f:3.2}}{\lesssim} \|A'\|_{L^1(0,T_1)}
\|u-v\|_{L^q(0,T_1;C^{m+1,\alpha}(\bar\Omega))}.
\end{equation}
Thus, by Lemma \ref{lemma2.5} we know that $A'\in L^1(0,T)$, and due to the absolute continuity of the integral, for any $\epsilon>0$, there exists $\delta>0$ such that for any interval $I\subset (0,+\infty)$ with $|I|\leq \delta$, we have $\int_{I}|A'|<\epsilon$.
Hence, there exists $\delta_0>0$ such that for $T_1=\delta_0$,
the estimate $\eqref{f:3.6}$ turns to be
\begin{equation*}
\|\mathcal{T}_1(u)-\mathcal{T}_1(v)
 \|_{L^q(0,\delta_0;C^{m+1,\alpha}(\bar\Omega))}
  \leq  \frac{1}{2}\|u-v\|_{L^q(0,\delta_0;C^{m+1,\alpha}(\bar\Omega))}.
\end{equation*}
This coupled with $\eqref{rh4}$ verifies the contraction property of $\mathcal{T}_1$
in $L^q(0,\delta_0;C^{m+1,\alpha}(\bar\Omega))$.

Consequently, it follows from Banach's fixed-point theorem that there exists unique solution $p_0$
such that $\mathcal{T}_1(p_0) = p_0$ in $L^q(0,\delta_0;C^{m+1,\alpha}(\bar\Omega))$, and this together with
the estimate $\eqref{rh4}$ leads to the desired result
\begin{equation*}
\|p_0\|_{L^2(0,\delta_0;C^{2,1/2}(\bar\Omega))}\lesssim
\|f\|_{L^2(0,\delta_0;C^{1,1/2}(\bar\Omega))}.
\end{equation*}
We have completed the proof.
\end{proof}

\subsection{\centering Extension of solution}

\begin{lemma}[inductions]\label{L3.2}
Let $0<\delta_0\ll 1$ be given as in Lemma $\ref{L3.1}$. Assume the same
conditions as in Proposition $\ref{P:1}$. Let $n\geq 2$ be an arbitrary fixed  large integer, and $T_k=k \delta_0$
with $k=1,\cdots,n$. Assume that there exists
a unique solution $p_0\in L^q(0,T_{n-1};C^{m+1,\alpha}(\bar\Omega))$ to the equations $\eqref{pde1.2}$,
satisfying the estimate
\begin{equation}\label{pri:3.3}
\|p_0\|_{L^q(0,T_{n-1};C^{m+1,\alpha}(\bar\Omega))}
\leq C_{n-1}
\|f\|_{L^q(0,T_{n-1};C^{m,\alpha}(\bar\Omega))}.
\end{equation}
Then, there exists a unique extension of the solution $p_0\in L^q(0,T_{n};C^{m+1,\alpha}(\bar\Omega))$ to the equations $\eqref{pde1.2}$,
and satisfies the estimate
\begin{equation}\label{pri:3.4}
\|p_0\|_{L^q(0,T_{n};C^{m+1,\alpha}(\bar\Omega))}
\leq C_{n}
\|f\|_{L^q(0,T_{n};C^{m,\alpha}(\bar\Omega))},
\end{equation}
where $C_n$ is monotonically ascending with respect to $n$.
\end{lemma}

\begin{proof}
We continue to adopt the notation used in Lemma $\ref{L3.1}$.
For any fixed $n\geq 2$,
and for any $t\in[T_{n-1},T_n]$,  we start from considering
\begin{equation}\label{pde:3.3}
\left\{
\begin{aligned}
  \mathcal{L}\Big[p_0(\cdot,t)+K_n(p_0)(\cdot,t)\Big]
  =\nabla\cdot\Big[&A'\ast f
  +A(0)f
  -\int_{0}^{T_{n-1}}dsA'(t-s)
  \nabla p_0(\cdot,s)
  \Big]\quad\text{in}~\Omega;\\
 \frac{\partial}{\partial\nu}
 \Big[p_0(\cdot,t)+K_n(p_0)(\cdot,t)\Big]
 = \vec{n}\cdot\bigg\{
 &A'\ast f+A(0)f
 -\int_{0}^{T_{n-1}}dsA'(t-s)
  \nabla p_0(\cdot,s)
  \bigg\}\quad\text{on}~\partial\Omega,
\end{aligned}\right.
\end{equation}
where the auxiliary function $K_n(p_0)$ is given by:
\begin{equation*}
\left\{\begin{aligned}
\nabla\cdot\Big[A(0)\nabla K_n(p_0)(\cdot,t)-\int_{T_{n-1}}^{t}ds
A'(t-s)\nabla p_0(\cdot,s)
\Big]&=0 &\quad &\text{in}~\Omega;\\
\vec{n}\cdot\Big[A(0)\nabla K_n(p_0)(\cdot,t)-\int_{T_{n-1}}^{t}ds
A'(t-s)\nabla p_0(\cdot,s)
\Big]&=0 &\quad &\text{on}~\partial\Omega,\\
\int_{\Omega}K_n(p_0)(\cdot,t)&=0.
\end{aligned}\right.
\end{equation*}
In terms of Schauder estimates, we obtain that
\begin{equation*}
\begin{aligned}
\|K_n(p_0)(\cdot,t)\|_{C^{m+1,\alpha}(\bar\Omega)}
\lesssim
\bigg\|\int_{T_{n-1}}^{t}ds
A'(t-s)\nabla p_0(\cdot,s)\bigg\|_{C^{m,\alpha}(\bar\Omega)}
\lesssim
\int_{T_{n-1}}^{t}ds
|A'(t-s)|\|\nabla p_0(\cdot,s)\|_{C^{m,\alpha}(\bar\Omega)}.
\end{aligned}
\end{equation*}
Let $1/q+1/{q'} = 1$. From H\"older's inequality and Fubini's theorem,
it follows that
\begin{equation*}
\begin{aligned}
&\int_{T_{n-1}}^{T_n}\|K_n(p_0)(\cdot,t)
dt\|^q_{C^{m+1,\alpha}(\bar\Omega)}
\lesssim
\int_{T_{n-1}}^{T_n}dt\Big(\int_{T_{n-1}}^{t}ds
|A'(t-s)|\|\nabla p_0(\cdot,s)\|_{C^{m,\alpha}(\bar\Omega)}\Big)^q\\
&\lesssim\int_{T_{n-1}}^{T_n}dt
\bigg(\Big(\int_{T_{n-1}}^{t}ds|A'(t-s)|\Big)^{\frac{q}{q'}}
\int_{T_{n-1}}^{t}ds
|A'(t-s)|\|\nabla p_0(\cdot,s)\|^q_{C^{m,\alpha}(\bar\Omega)}\bigg)\\
&\lesssim \bigg(\int_{0}^{\delta_0}dt|A'(t)|\bigg)^{\frac{q}{q'}+1}
\int_{T_{n-1}}^{T_n}dt\|p_0(\cdot,t)
\|^q_{C^{m+1,\alpha}(\bar\Omega)}.
\end{aligned}
\end{equation*}
This implies that
\begin{equation}\label{f:3.7}
 \|K_n(p_0)
\|_{L^q(T_{n-1},T_n; C^{m+1,\alpha}(\bar\Omega))}
\leq \frac{1}{2} \|p_0
\|_{L^q(T_{n-1},T_n; C^{m+1,\alpha}(\bar\Omega))}.
\end{equation}

\medskip

Then we introduce the following notation for the ease of the statement.
\begin{equation*}
\tilde{f}_n(\cdot,t):=\mathcal{L}^{-1}\nabla\cdot\Big[
\int_{0}^tdsA'(t-s)f(\cdot,s)+A(0)f(\cdot,t)
-\int_{0}^{T_{n-1}}dsA'(t-s)\nabla p_0(\cdot,s)
\Big]\quad \text{in}~\Omega,
\end{equation*}
which would be treated as the known data in the abstract equation below. For a.e. $t\in[T_{n-1},T_n]$, the equations \eqref{pde:3.3} is equivalent to
\begin{equation}\label{pde:3.4}
\big(p_0+K_n(p_0)\big)(\cdot,t)=\tilde{f}_n(\cdot,t)
\quad \text{in}\quad\Omega.
\end{equation}
Thus, the well-posedness of $\eqref{pde:3.3}$ is reduced to studying  the following contraction map\footnote{Similar to the form of $K_1$,
the operator $K_n$ is linear.}:
\begin{equation}\label{f:3.8}
  \mathcal{T}_n(p_0) := \tilde{f}_n - K_n(p_0).
\end{equation}
Obviously, the strict contraction property of $\mathcal{T}_n$  in $L^q(T_{n-1},T_n;C^{m+1,\alpha}(\bar\Omega))$ is due to the estimate
$\eqref{f:3.7}$.

\medskip

We continue to verify that the range of $\mathcal{T}_n$ is included in $L^q(T_{n-1},T_n;C^{m+1,\alpha}(\bar\Omega))$, and start from
\begin{equation*}
\|\mathcal{T}_n(p_0)(\cdot,t)\|_{C^{m+1,\alpha}(\bar\Omega)}
\leq \|K_n(p_0)(\cdot,t)\|_{C^{m+1,\alpha}(\bar\Omega)}+
\|\tilde{f}_n(\cdot,t)\|_{C^{m+1,\alpha}(\bar\Omega)}.
\end{equation*}
On account of the estimate $\eqref{f:3.7}$, it suffices to estimate
the quantity $\|\tilde{f}_n\|_{L^q(T_{n-1},T_n;C^{m+1,\alpha}(\bar\Omega))}$.
According to the definition of $\tilde{f}_n$ above,
by using Schauder estimates, we first have
\begin{equation*}
\begin{aligned}
&\|\tilde{f}_n(\cdot,t)\|_{C^{m+1,\alpha}(\bar\Omega)}\\
&\lesssim
\int_{0}^{t}ds|A'(t-s)|
\|f(\cdot,s)\|_{C^{m,\alpha}(\bar\Omega)}
+|A(0)|\|f(\cdot,t)\|_{C^{m,\alpha}(\bar\Omega)}
+\int_{0}^{T_{n-1}}ds|A'(t-s)|
\|\nabla p_0(\cdot,s)\|_{C^{m,\alpha}(\bar\Omega)},
\end{aligned}
\end{equation*}
and then appealing to H\"older's inequality and Fubini's theorem, as well as, the inductive assumption, we obtain
\begin{equation*}\label{rh11}
\begin{aligned}
&\int_{T_{n-1}}^{T_n}dt\|\tilde{f}_n(\cdot,t)
\|^q_{C^{m+1,\alpha}(\bar\Omega)} \\
&\lesssim\bigg(
\int_{0}^{T_n}dt|A'(t)|\bigg)^{\frac{q}{q'}+1}
\bigg(\int_{0}^{T_n}dt\|f(\cdot,t)
\|^q_{C^{m,\alpha}(\bar\Omega)}
+\int_{0}^{T_{n-1}}dt\|p_0(\cdot,t)\|^q_{C^{m+1,\alpha}(\bar\Omega)}\bigg)
+\int_{0}^{T_n}dt\|f(\cdot,t)
\|^q_{C^{m,\alpha}(\bar\Omega)}\\
&\overset{\eqref{pri:3.3}}{\lesssim}\int_{0}^{T_n}dt\|f(\cdot,t)
\|^q_{C^{m,\alpha}(\bar\Omega)}.
\end{aligned}
\end{equation*}
This coupled with $\eqref{f:3.8}$ leads to
\begin{equation*}
\begin{aligned}
\|\mathcal{T}_n(p_0)\|_{L^q(T_{n-1},T_n;C^{m+1,\alpha}(\bar\Omega))}
&\leq \|K_n(p_0)\|_{L^q(T_{n-1},T_n; C^{m+1,\alpha}(\bar\Omega))}
+\|\tilde{f}_n\|_{L^q(T_{n-1},T_n; C^{m+1,\alpha}(\bar\Omega))}\\
&\overset{\eqref{f:3.7}}{\leq} \frac{1}{2}\|p_0\|_{L^q(T_{n-1},T_n; C^{m+1,\alpha}(\bar\Omega))}
+ C\|f\|_{L^q(0,T_n; C^{m,\alpha}(\bar\Omega))}.
\end{aligned}
\end{equation*}

\medskip

Hence, by Banach's fixed-point theorem, there exists the unique solution
$p_0\in L^q(T_{n-1},T_n;C^{m+1,\alpha}(\bar\Omega))$, satisfying the equation
$\eqref{pde:3.4}$.  Moreover, we have
\begin{equation*}
\begin{aligned}
\|p_0\|_{L^q(T_{n-1},T_n;C^{m+1,\alpha}(\bar\Omega))}
\leq
\frac{1}{2}\|p_0\|_{L^q(T_{n-1},T_n; C^{m+1,\alpha}(\bar\Omega))}
+ C\|f\|_{L^q(0,T_n; C^{m,\alpha}(\bar\Omega))},
\end{aligned}
\end{equation*}
which further implies
\begin{equation*}
\|p_0\|_{L^q(T_{n-1},T_n;C^{m+1,\alpha}(\bar\Omega))}
\lesssim
\|f\|_{L^q(0,T_{n};C^{m,\alpha}(\bar\Omega))}.
\end{equation*}
As a result, it follows from \eqref{pri:3.3} that
\begin{equation*}
\|p_0\|_{L^q(0,T_n;C^{m+1,\alpha}(\bar\Omega))}
\lesssim
\|f\|_{L^q(0,T_n;C^{m,\alpha}(\bar\Omega))},
\end{equation*}
and this completes the whole proof.
\end{proof}

\noindent
\textbf{Proof of Proposition $\ref{P:1}$.} Combining Lemmas $\ref{lemma2.5}$, $\ref{L3.1}$, and $\ref{L3.2}$, we have the desired results.
\qed

\paragraph{Acknowledgements.}
The second author deeply appreciated Prof. Felix Otto for his instruction and encourages when he held a post-doctoral position in the Max Planck Institute for Mathematics in the Sciences (in Leipzig).
The second author was supported by the Young Scientists Fund of the National Natural Science Foundation of China (Grant No. 11901262); the Fundamental Research Funds for the Central Universities (Grant No.lzujbky-2021-51);
The third author was supported by the National Natural Science Foundation of China (Grant No. 12171010).

\paragraph{Data availability statement.}
 Data sharing not applicable to this article as no datasets were generated or analysed during the current study.
 
\paragraph{Conflict of interest.} 
On behalf of all authors, the corresponding author states that there is no conflict of interest.

\addcontentsline{toc}{section}{References}


%
%
%
%

\end{document}